\definecolor{mygreen}{RGB}{28,172,0} 
\definecolor{mylilas}{RGB}{170,55,241}
\author[1]{Petru A. Cioica-Licht}
\author[2]{Martin Hutzenthaler}
\author[3]{P. Tobias Werner}
\affil[1]{Institute of Mathematics, University of Kassel, \texttt{cioica-licht@mathematik.uni-kassel.de}}
\affil[2]{Faculty of Mathematics, University of Duisburg-Essen, \texttt{martin.hutzenthaler@uni-due.de}}
\affil[3]{ Institute of Mathematics, University of Kassel,\texttt{twerner@mathematik.uni-kassel.de}}
\title{Deep neural networks overcome the curse of dimensionality in the numerical approximation of semilinear partial differential equations}
\theoremstyle{plain}
\newtheorem{theorem}{Theorem}[section]
\theoremstyle{theorem}
\newtheorem{corollary}[theorem]{Corollary}
\newtheorem{lemma}[theorem]{Lemma}
\newtheorem{setting}[theorem]{Setting}
\newcommand{\R}{\mathbb{R}}
\newcommand{\F}{\mathbb{F}}
\newcommand{\E}{\mathbb{E}}
\newcommand{\N}{\mathbb{N}}
\newcommand{\calR}{\mathcal{R}}
\newcommand{\calP}{\mathcal{P}}
\newcommand{\calD}{\mathcal{D}}
\newcommand{\calY}{\mathcal{Y}}
\newcommand{\calX}{\mathcal{X}}
\newcommand{\calV}{\mathcal{V}}
\newcommand{\frakg}{\mathfrak{g}}
\newcommand{\frakf}{\mathfrak{f}}
\newcommand{\frakt}{\mathfrak{t}}
\newcommand{\frakT}{\mathfrak{T}}
\newcommand{\frakv}{\mathfrak{v}}
\newcommand{\tmu}{\tilde{\mu}}
\newcommand{\tsigma}{\tilde{\sigma}}
\newcommand{\nrm}[1]{\left\lVert #1 \right\rVert}
\newcommand{\mnrm}[1]{{\left\vert\kern-0.25ex\left\vert\kern-0.25ex\left\vert #1 
		\right\vert\kern-0.25ex\right\vert\kern-0.25ex\right\vert}}
\begin{document}
	\pagenumbering{Roman}
	\maketitle
	\begin{abstract}
		\noindent We prove that deep neural networks are capable of approximating solutions of semilinear Kolmogorov PDE in the case of gradient-independent, Lipschitz-continuous nonlinearities, while the required number of parameters in the networks grow at most polynomially in both dimension $ d\in\N $ and prescribed reciprocal accuracy $ \varepsilon $. Previously, this has only been proven in the case of semilinear heat equations. 
	\end{abstract}
	\tableofcontents
	\hypersetup{
		colorlinks,
		linkcolor=red,
		citecolor=violet,
		urlcolor=blue} 
	\pagenumbering{arabic}
	\section{Introduction}
	Deep-learning based algorithms are employed in a wide field of real world applications and are nowadays the standard approach for most machine learning  related problems. They are used extensively in face and speech recognition, fraud detection, function approximation, and solving of partial differential equations (PDEs). The latter are utilized to model numerous phenomena in nature, medicine, economics, and physics. Often, these PDEs are nonlinear and high-dimensional. For instance, in the famous Black-Scholes model the PDE dimension $ d\in\N $ corresponds to the number of stocks considered in the model. Relaxing the rather unrealistic assumptions made in the model results in a loss of linearity within the corresponding Black-Scholes PDE, cf., e.g.,    \cite{ANKUDINOVA2008799,Bergman_Yaacov_BS_interest_rates}.
	These PDEs, however, can not in general be solved explicitly and therefore need to be approximated numerically.
	Classical grid based approaches such as finite difference or finite element methods  suffer from the \textit{curse of dimensionality} in the sense that the computational effort grows exponentially in the PDE dimension $ d\in \N $ and are therefore not appropriate.
	Among the most prominent and promising approaches to approximate high-dimensional PDEs are deep-learning algorithms, which seem to handle high-dimensional problems well, cf., e.g., \cite{Han8505,beck2021deep,Beck_2019,E_2017,beck2021overview}. 
	However, compared to the vast field of applications these techniques are successfully applied to  nowadays, there exist only few theoretical results proving that deep neural networks do not suffer from the curse of dimensionality in the numerical approximation of partial differential equations, see \cite{Hutzenthaler_2020,jentzen2019proof,Berner_2020,grohs2018proof,grohs2020deep,Reisinger2020deep,Takahashi_Yamada2021deep,grohs2021deep}.\\
	The key contribution of this article is a rigorous proof that deep neural networks do overcome the curse of dimensionality in the numerical approximation of semilinear partial differential equations, where the nonlinearity, the coefficients determining the linear part, and the initial condition of the PDEs are Lipschitz continuous. In particular, Theorem \ref{INTROTHM} below proves that the number of parameters in the approximating deep neural network grows at most polynomially in both the reciprocal of the described approximation accuracy $ \varepsilon \in (0,1]$ and the PDE dimension $ d\in\N $:
	\begin{theorem}\label{INTROTHM}
		Let $ \nrm{\cdot}\colon(\cup_{d \in \N}\R^d)\rightarrow[0,\infty) $ and $ \mathbf{A}_d\colon\R^d\rightarrow\R^d, d\in\N, $ satisfy for all $ d\in\N, x=(x_1,...,x_d)\in\R^d $ that $ \nrm{x} = \sqrt{\sum_{i=1}^{d}(x_i)^2}$ and $ \mathbf{A}_d(x)=(\max\{x_1,0\},...,\max\{x_d,0\}) $,
		let $ \nrm{\cdot}_F\colon(\cup_{d\in\N}\R^{d\times d})$  $\rightarrow [0,\infty) $ satisfy for all $d\in\N, A= (a_{i,j})_{i,j\in\{1,...,d\}}\in\R^{d\times d} $  that $ \nrm{A}_F=  \sqrt{\sum_{i,j=1}^{d}(a_{i,j})^2}$,
		let $ \langle \cdot,\cdot\rangle\colon$  $(\cup_{d \in \N}\R^d$  $\times\R^d)\rightarrow\R $ satisfy for all $ d\in \N, $ $ x=(x_1,...,x_d), $  $y=(y_1,...,y_d)\in\R^d $ that $ \langle x,y\rangle = \sum_{i=1}^d x_iy_i, $
		let $ \mathbf{N}=\cup_{H\in\N}\cup_{(k_0,...,k_{H+1})\in\N^{H+2}}[\prod_{n=1}^{H+1}(\R^{k_n\times k_{n-1}}\times \R^{k_n})] $, let $ \calR\colon\mathbf{N}\rightarrow (\cup_{k,l\in\N} C(\R^k,\R^l)), $
		$ \calD\colon\mathbf{N}\rightarrow \cup_{H\in\N}\N^{H+2}, $
		and $ \mathcal{P}\colon\mathbf{N}\rightarrow\N  $ satisfy for all $ H\in\N, k_0,k_1,...,k_H,k_{H+1}\in \N,$  $ \Phi=((W_1,B_1),...,(W_{H+1},B_{H+1}))\in\prod_{n=1}^{H+1}(\R^{k_n\times k_{n-1}}\times\R^{k_n}), x_0\in \R^{k_0},...,x_H\in\R^{k_{H}} $  with $ \forall n\in \N\cap[1,H]\colon x_n = \mathbf{A}_{k_n}(W_nx_{n-1}+B_n)  $ that 
		\begin{equation}
			\mathcal{R}(\Phi)\in C(\R^{k_0},\R^{k_{H+1}}),\quad (\mathcal{R}(\Phi))(x_0)=W_{H+1}x_H+B_{H+1},
		\end{equation}
		\begin{equation}
			\calD(\Phi)=(k_0,k_1,...,k_H,k_{H+1}),  \text{ and } \mathcal{P}(\Phi)=\sum_{n=1}^{H+1}k_n(k_{n-1}+1),
		\end{equation}
		let $ c,T\in(0,\infty) $, $ f\in C(\R,\R) $, for all $ d\in\N $ let $ g_d \in C(\R^d,\R) $, $ u_d \in C^{1,2}([0,T]\times\R^d,\R) $, $ \mu_d=(\mu_{d,i})_{i\in\{1,...,d\}}\in C(\R^d,\R^d) $, $ \sigma_d = (\sigma_{d,i,j})_{i,j\in\{1,...,d\}}\in C(\R^d,\R^{d\times d}) $ satisfy for all $ t\in[0,T], x=(x_1,...,x_d),$  $y =(y_1,...,y_d)\in \R^d $ that $ u_d(0,x)=g_d(x), $
		\begin{equation}
			|f(x_1)-f(y_1)|\le c|x_1-y_1|, \quad |u_d(t,x)|^2\le c\left[d^c+\nrm{x}^2\right],
		\end{equation}
		\begin{equation}\label{INTROTHM_PDE}
			\dfrac{\partial }{\partial t} u_d (t,x) =\langle(\nabla_x u_d)(t,x), \mu_d(x)\rangle+\tfrac{1}{2}\textup{Tr}\left(\sigma_d(x)[\sigma_d(x)]^*(\textup{Hess}_xu_d)(t,x)\right)+f(u_d(t,x)),
		\end{equation}
		for all $ d\in\N,v\in\R^d, \varepsilon\in(0,1] $ let $ \hat{\sigma}_{d,\varepsilon}\in C(\R^d,\R^{d\times d}), $ $ \frakg_{d,\varepsilon}, \tmu_{d,\varepsilon},\tsigma_{d,\varepsilon,v}\in\mathbf{N}  $ satisfy for all $d\in\N, x,y,v\in\R^d, \varepsilon\in(0,1], $  
		that $ \calR(\frakg_{d,\varepsilon})\in C(\R^d,\R) $, $\calR(\tmu_{d,\varepsilon})\in C(\R^d,\R^d)  $, 
		$ \calR(\tsigma_{d,\varepsilon,v})\in C(\R^d,\R^d),$  $ (\calR(\tsigma_{d,\varepsilon,v}))(x)=\hat{\sigma}_{d,\varepsilon}(x)v,$  $ \calD(\tsigma_{d,\varepsilon,v})=\calD(\tsigma_{d,\varepsilon,0}),$
		\begin{equation}
			|(\calR(\frakg_{d,\varepsilon}))(x)|^2+\max_{i,j\in\{1,...,d\}}\left(\left |[(\calR(\tmu_{d,\varepsilon}))(0)]_i\right|+|(\hat{\sigma}_{d,\varepsilon}(0))_{i,j}|\right)\le c\left[d^c+\nrm{x}^2\right],
		\end{equation}
		\begin{equation}
			\max\{|g_d(x)-(\calR(\frakg_{d,\varepsilon}))(x)|, \nrm{\mu_d(x)-(\calR(\tmu_{d,\varepsilon}))(x)}, \nrm{\sigma_d(x)-\hat{\sigma}_{d,\varepsilon}(x)}_F\}\le \varepsilon cd^c(1+\nrm{x}^c),
		\end{equation}
		\begin{equation}
			\max\{|(\calR(\frakg_{d,\varepsilon}))(x)-(\calR(\frakg_{d,\varepsilon}))(y)|,\nrm{(\calR(\tmu_{d,\varepsilon}))(x)-(\calR(\tmu_{d,\varepsilon}))(y)},\nrm{\hat{\sigma}_{d,\varepsilon}(x)-\hat{\sigma}_{d,\varepsilon}(y)}_F \}\le c\nrm{x-y},
		\end{equation}
		\begin{equation}
			\max\{\calP(\frakg_{d,\varepsilon}),\calP(\tmu_{d,\varepsilon}),\calP(\tsigma_{d,\varepsilon,\textcolor{blue}{v}})\}\le cd^c\varepsilon^{-c}.
		\end{equation}
		Then there exists $ \eta \in (0,\infty),  $ such that for all $ d\in\N, \varepsilon\in(0,1] $ there exists a $ \Psi_{d,\varepsilon}\in\mathbf{N} $ satisfying $ \calR(\Psi_{d,\varepsilon})\in C(\R^d,\R), $ $ \calP(\Psi_{d,\varepsilon})\le \eta d^\eta \varepsilon^{-\eta} $ and \\
		\begin{equation}
			\left[\int_{[0,1]^d}|u_d(T,x)-(\calR(\Psi_{d,\varepsilon}))(x)|^2dx\right]^{\frac{1}{2}}\le \varepsilon.
		\end{equation}
	\end{theorem}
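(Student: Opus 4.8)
The plan is to combine a nonlinear Feynman--Kac representation for $u_d$, a full-history multilevel Picard (MLP) approximation of the associated stochastic fixed point in which the underlying diffusion is replaced by an Euler--Maruyama scheme assembled from the networks $\tmu_{d,\varepsilon}$ and $\tsigma_{d,\varepsilon,\cdot}$, the elementary calculus of ReLU networks (composition, parallelisation, scalar multiples, and sums acting on a common input), and a final derandomisation argument. \emph{Reduction and Feynman--Kac.} First I would observe that, since $\calR(\tmu_{d,\varepsilon}),\hat\sigma_{d,\varepsilon},\calR(\frakg_{d,\varepsilon})$ are uniformly $c$-Lipschitz and converge pointwise to $\mu_d,\sigma_d,g_d$ as $\varepsilon\downarrow 0$ (by the bound $\varepsilon cd^c(1+\nrm{x}^c)$), the limits $\mu_d,\sigma_d,g_d$ are themselves $c$-Lipschitz with values at the origin of polynomial size in $d$; hence the SDE $dX^{d,x}_s=\mu_d(X^{d,x}_s)\,ds+\sigma_d(X^{d,x}_s)\,dW_s$, $X^{d,x}_0=x$, its Euler--Maruyama discretisation, and its perturbation with coefficients $(\calR(\tmu_{d,\varepsilon}),\hat\sigma_{d,\varepsilon})$ are all well posed with moments polynomial in $d$. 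Applying Itô's formula to $s\mapsto u_d(T-s,X^{d,x}_s)$, localising at exit times of large balls, and letting the radius tend to infinity (using the at most quadratic growth of $u_d$ together with the moment bounds on $X^{d,x}$) yields
\[
u_d(T,x)=\E\big[g_d(X^{d,x}_T)\big]+\int_0^T\E\big[f\big(u_d(T-s,X^{d,x}_s)\big)\big]\,ds.
\]

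\emph{MLP with Euler--Maruyama and error analysis.} Fix $d$ and $\varepsilon$, let $\frakf_\varepsilon\in\mathbf{N}$ be a piecewise-linear ReLU approximation of $f$ on a polynomially large interval (which suffices since all relevant values stay bounded), and for $N\in\N$ let $v^N_{d,\varepsilon}$ be the fixed point of the $N$-step Euler--Maruyama analogue of the integral equation above with $(g_d,f,\mu_d,\sigma_d)$ replaced by $(\calR(\frakg_{d,\varepsilon}),\frakf_\varepsilon,\calR(\tmu_{d,\varepsilon}),\hat\sigma_{d,\varepsilon})$; one Euler step here is the map $y\mapsto y+\tfrac TN(\calR(\tmu_{d,\varepsilon}))(y)+(\calR(\tsigma_{d,\varepsilon,\xi}))(y)$ for a realisation $\xi$ of the rescaled Brownian increment, using $(\calR(\tsigma_{d,\varepsilon,v}))(y)=\hat\sigma_{d,\varepsilon}(y)v$. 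For parameters $n,M\in\N$ let $U^{d,\varepsilon,\theta}_n$ be the full-history MLP approximation of $v^N_{d,\varepsilon}$, with $\theta$ indexing the i.i.d.\ family of Brownian increments and uniform time points. I would then combine: (i) $\|u_d(T,\cdot)-v^N_{d,\varepsilon}(T,\cdot)\|_{L^2([0,1]^d)}\le\kappa_d(\varepsilon+N^{-1/2})$, obtained by subtracting the two integral equations and invoking Gronwall's inequality, controlling the terminal terms via the $c$-Lipschitz continuity of $g_d$, the $\varepsilon$-closeness of $\calR(\frakg_{d,\varepsilon})$, the coefficient-perturbation stability $\E\nrm{X^{d,x}_T-X^{d,\varepsilon,x}_T}^2\le\kappa_d^2\varepsilon^2(1+\nrm{x}^{2c})$, and the strong Euler rate, and likewise for the nonlinear term together with the $\frakf_\varepsilon$-versus-$f$ error, every $x$-dependent factor integrating to a polynomial constant over $[0,1]^d$; (ii) the standard MLP bias--variance bound, which with $M$ a constant depending only on $c,T$ and $n$ of order $\log(\kappa_d\varepsilon^{-1})$ makes $(\E\!\int_{[0,1]^d}|v^N_{d,\varepsilon}(T,x)-U^{d,\varepsilon,\theta}_n(T,x)|^2\,dx)^{1/2}\le\varepsilon$, the number of leaf evaluations being at most $(3M)^n$ and hence polynomial in $d$ and $\varepsilon^{-1}$; (iii) the choice $N=\lceil\kappa_d^2\varepsilon^{-2}\rceil$. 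Rescaling the intermediate accuracies by the polynomial factor $\kappa_d$ makes the total $L^2$-error at most $\varepsilon$ in expectation over $\theta$.

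\emph{Network realisation, parameter count, and derandomisation.} For each fixed $\theta$ the map $x\mapsto U^{d,\varepsilon,\theta}_n(T,x)$ is, by unfolding its recursion, a weighted sum of at most $(3M)^n$ leaves, each leaf being a composition of at most $nN$ Euler-step networks (built from $\tmu_{d,\varepsilon}$ and $\tsigma_{d,\varepsilon,\cdot}$ by parallelisation, scalar multiplication, and addition of the identity map) followed by $\calR(\frakg_{d,\varepsilon})$ and at most $n$ nested copies of $\frakf_\varepsilon$. Using that composition, parallelisation, scalar multiplication, and summation on $\mathbf{N}$ change $\calD$ and $\calP$ in a controlled (at most quadratic) way, and that $\calP(\tmu_{d,\varepsilon}),\calP(\tsigma_{d,\varepsilon,v}),\calP(\frakg_{d,\varepsilon})\le cd^c\varepsilon^{-c}$ with $\calP(\frakf_\varepsilon)$ polynomial, one assembles a single $\Phi\in\mathbf{N}$ with $\calR(\Phi)=U^{d,\varepsilon,\theta}_n(T,\cdot)\in C(\R^d,\R)$ and $\calP(\Phi)\le\eta d^\eta\varepsilon^{-\eta}$ for some $\eta\in(0,\infty)$ independent of $d,\varepsilon$. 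Since the expected $L^2([0,1]^d)$-error over $\theta$ is at most $\varepsilon^2$, some realisation $\theta^*$ achieves $\int_{[0,1]^d}|u_d(T,x)-U^{d,\varepsilon,\theta^*}_n(T,x)|^2\,dx\le\varepsilon^2$; setting $\Psi_{d,\varepsilon}:=\Phi$ for that $\theta^*$ yields the assertion.

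\emph{Main obstacle.} The principal difficulty is the network construction together with its parameter count: the recursive MLP scheme must be flattened into one element of $\mathbf{N}$ without the recursion depth $n$ causing an exponential blow-up --- which forces the choices $M=$ constant and $n=\mathcal{O}(\log(\kappa_d\varepsilon^{-1}))$ so that $(3M)^n$ stays polynomial --- and every application of composition, parallelisation, and summation must be tracked precisely; simultaneously, all constants from the SDE moment bounds, the strong Euler--Maruyama rate, the coefficient-perturbation stability, and the MLP variance estimate must be kept polynomial in $d$. The coefficient-perturbation estimate itself, although the genuinely new ingredient compared with the semilinear heat equation, becomes routine once one notices that $\mu_d,\sigma_d,g_d$ inherit global $c$-Lipschitz continuity from their network approximations.
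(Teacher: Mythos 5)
Your proposal follows essentially the same route as the paper's own proof: a stochastic fixed-point (Feynman--Kac) representation of $u_d$, a Gronwall-type perturbation estimate for replacing $g_d,\mu_d,\sigma_d,f$ by their network counterparts, an MLP scheme driven by Euler--Maruyama paths whose realizations are flattened into ReLU networks via the composition/parallelization/summation calculus with the $(3M)^n$ width bound, a polynomial parameter count, and a final derandomization in $\omega$ --- exactly the chain Lemma \ref{newfullperturbation}, Corollary \ref{fullerror}, Lemma \ref{U=R}, Theorem \ref{MainTheorem}, Corollary \ref{MainCorollary}. The differences are only bookkeeping (you fix $M$, take $n\sim\log(\varepsilon^{-1})$ and decouple the Euler step count $N\sim\varepsilon^{-2}$, whereas the paper couples $n=M=N_{d,\varepsilon}$ and $K=M^M$, and you get the representation formula from It\^o's formula directly instead of the viscosity-solution results), yielding a worse but still admissible exponent $\eta$; only your parenthetical justification for approximating $f$ on a bounded interval (``all relevant values stay bounded'') is inaccurate and should be replaced, as in the paper, by a globally valid weighted bound $|f(v)-(\calR(\frakf_\varepsilon))(v)|\le\varepsilon(1+|v|^q)$ combined with the moment estimates your error decomposition already invokes.
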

	
	\noindent
	Theorem \ref{INTROTHM} follows from  Corollary \ref{MainCorollary}  (applied with $  u_d(t,x)\curvearrowleft u_d(T-t,x) $ for $ t\in [0,T], x\in\R^d $ in the notation of Corollary \ref{MainCorollary}), which in turn is a special case of Theorem \ref{MainTheorem}, the main result of this article.\\
	Let us comment on the mathematical objects appearing in Theorem \ref{INTROTHM}. By $ \mathbf{N} $ in Theorem \ref{INTROTHM} above we denote the set of all artificial feed-forward deep neural networks (DNNs), by $ \calR $ in Theorem \ref{INTROTHM} above we denote the operator that maps each DNN to its corresponding function, by $ \calP $ in Theorem \ref{INTROTHM} above we denote the function that maps a DNN to its number of parameters. 
	For all $ d\in \N $ we have that $ \mathbf{A}_d\colon\R^d\rightarrow\R^d$ in Theorem \ref{INTROTHM} above refers to the componentwise applied rectified linear unit (ReLU) activation fucntion. 
	The functions $ \mu_d\colon\R^d\rightarrow\R^d $ and $ \sigma_d\colon\R^d\rightarrow\R^{d\times d} $ in Theorem \ref{INTROTHM} above specify the linear part of the PDEs in (\ref{INTROTHM_PDE}). The functions $ g_d\colon\R^d\rightarrow\R,d\in\N, $ describe the initial condition, while the function $ f\colon\R\rightarrow\R $ specifies the nonlinearity of the PDEs in (\ref{INTROTHM_PDE}) above.
	The real number  $ T\in(0,\infty) $ describes the  time horizon of the PDEs in (\ref{INTROTHM_PDE}) in Theorem \ref{INTROTHM} above.
	The real number $ c\in (0,\infty) $ in Theorem \ref{INTROTHM} above is used to formulate a growth condition and a Lipschitz continuity condition of the functions $u_d\colon\R^d\rightarrow\R, \calR(\tmu_{d,\varepsilon})\colon\R^d\rightarrow\R^d, \hat{\sigma}_{d,\varepsilon}\colon\R^d\rightarrow\R^{d\times d} $,  $\calR( \frakg_{d,\varepsilon})\colon\R^d\rightarrow\R,d\in\N, \varepsilon\in(0,1] $, and $ f\colon \R\rightarrow\R.  $ 
	Furthermore, the real number $ c\in(0,\infty) $ in Theorem \ref{INTROTHM} above formulates an approximation condition between the functions $ \mu_d,\sigma_d, g_d $ and $  \calR(\tmu_{d,\varepsilon}), \hat{\sigma}_{d,\varepsilon},\calR(\frakg_{d,\varepsilon}), d\in\N, \varepsilon\in(0,1] $, as well as an upper boundary condition for the number of parameters involved in the DNNs $ \frakg_{d,\varepsilon},\tmu_{d,\varepsilon}, \tsigma_{d,\varepsilon,v}, d\in\N,v\in\R^d, \varepsilon\in(0,1] $.
	These assumptions guarantee unique existence of  \textit{viscosity solutions} of the PDEs in (\ref{INTROTHM_PDE}) in Theorem \ref{INTROTHM} above (see Theorem \ref{MainTheorem} and cf., e.g., \cite{users_guide_viscosity_solutions} for the notion of viscosity solution), and that the functions $ \mu_d, \sigma_d, g_d ,d\in\N,$ can be approximated without the curse of dimensionality by means of DNNs.
	The latter is the key assumption in Theorem \ref{INTROTHM} above and means in particular, that Theorem \ref{INTROTHM} can be, very roughly speaking, reduced to the statement that if DNNs are capable of approximating the initial condition and the linear part of the PDEs in (\ref{INTROTHM_PDE}) without the curse of dimensionality, then they are also capable of approximating its solution without the curse of dimensionality.\\
	Note that the statement of Theorem \ref{INTROTHM} is purely deterministic. The proof of Theorem \ref{MainTheorem}, however,  heavily relies on probabilistic tools.
	It is influenced by the method employed in \cite{Hutzenthaler_2020} and is built on the theory of full history recursive multilevel Picard approximations (MLP), which are numerical approximation schemes that have been proven to overcome the curse of dimensionality in numerous settings, cf., e.g., \cite{e2019multilevel,E_2019,hutzenthaler2020multilevel,beck2021history,Hutzenthaler_2020_cod_semilinearPDE}.
	In particular, in \cite{hutzenthaler2020multilevel} a MLP approximation scheme is introduced that overcomes the curse of dimensionality in the numerical approximation of the semilinear PDEs in (\ref{INTROTHM_PDE}). The central idea of the proof is, roughly speaking, that these MLP approximations can be well represented by DNNs, if the coefficients determining the linear part, the initial condition, and the nonlinearity are themselves represented by DNNs (cf. Lemma \ref{U=R}). 
	This explains why we need to assume that the coefficients of the PDEs in (\ref{INTROTHM_PDE}) can be approximated by means of DNNs without the curse of dimensionality.
	In Lemma \ref{newfullperturbation} below we establish strong approximation error bounds between solutions of the PDEs in (\ref{INTROTHM_PDE}) and solutions of the PDEs whose coefficients are specified by corresponding DNN approximations.
	In Lemma \ref{fullerror} below we construct an artificial probability space and employ a result from \cite{hutzenthaler2020multilevel} to estimate strong error bounds for the numerical approximation of the approximating PDEs by means of MLP approximations on that space. 
	In our main result, Theorem \ref{MainTheorem} below, we combine these results and prove existence of a DNN realization on the artifical probability space that suffices the prescribed approximation accuracy $ \varepsilon\in (0,1] $ between the solution of the PDEs in (\ref{INTROTHM_PDE}) and the constructed DNN, which eventually completes the proof of Theorem \ref{INTROTHM} above.
	\noindent The remainder of this article is organized as follows: 
	In section \ref{sect2} we will establish a full error bound for  MLP approximations in which every involved function is approximated (by means of DNNs).
	Section \ref{sect3} provides a mathematical framework for deep neural networks and establishes the connection between DNNs and MLP approximations, see Lemma \ref{U=R}.
	In section \ref{sect4} we combine the findings of section \ref{sect2} and section \ref{sect3} to prove our main result, Theorem \ref{MainTheorem}, as well as  Corollary \ref{MainCorollary}.
	\subsection{An Introduction to MLP-approximations}\label{sect1.1}
	Section \ref{sect2} below develops a result which proves stability for multilevel Picard approximations against perturbations in various components.
	To give an intuition, we will first informally introduce multilevel Picard approximations.
	Consider the semilinear PDE
	\begin{equation}\label{MLPINTROPDE}
		\frac{\partial}{\partial t}u(t,x)+\langle\mu(x),(\nabla_xu)(t,x)\rangle+\frac{1}{2}\textup{Tr}\left(\sigma(x)[\sigma(x)]^*(\textup{Hess}_xu)(t,x)\right)=-f(u(t,x))
	\end{equation}
	with terminal condition $ u(T,x)=g(x) $, where $ u\in C^{1,2}([0,T]\times\R^d,\R) $, $ \mu\in C(\R^d,\R^d) $, $ \sigma\in C(\R^d,\R^{d\times d}) $, $ g\in C(\R^d,\R) $ and $ f\in C(\R,\R) $,  $ f $ being the nonlinearity.
	Our goal is to find a numerical method to solve the PDE in (\ref{MLPINTROPDE}), which does not suffer from the curse of dimensionality.
	Classical grid based approaches are not suitable for this task and we may consider meshfree techniques such as Monte Carlo methods. We thus consider a filtered probability space  $ (\Omega,\mathcal{F},\mathbb{P},(\F_t)_{t\in[0,T]}) $ \textit{satisfying the usual conditions}\footnote{Note that we say that a filtered probability space $ (\Omega, \mathcal{F},\mathbb{P},(\F_t)_{t\in[0,T]}) $ \textit{satisfies the usual conditions} if and only if it holds for all $ t\in[0,T) $ that $ \{A\in \mathcal{F}:\mathbb{P}(A)=0\}\subseteq \F_t= (\cup_{s\in(t,T]}\F_s) $.} and a standard $ (\F_t)_{t\in[0,T]} $-Brownian motion $ W\colon [0,T]\times\Omega\rightarrow\R^d $.
	As we will see in Theorem \ref{MainTheorem}, under suitable assumptions there exist for all $ t\in[0,T],x\in\R^d $ up to indistinguishability unique, $ (\F_s)_{s\in[t,T]} $-adapted  stochastic processes $ X_t^x\colon[t,T]\times \Omega\rightarrow\R^d ,$ satisfying for all $ s\in[t,T] $   that 
	\begin{equation}\label{MLPINTROSDE}
		X_{t,s}^{x}=x+\int_t^s\mu(X_{t,r}^x)dr+\int_t^s\sigma(X_{t,r}^x)dW_r
	\end{equation}
	and 
	\begin{equation}\label{MLPINTROPDE_Feynman-Kac}
		u(t,x)=\E\left[g(X_{t,T}^x)\right]+\int_t^T\E\left[f(u(r,X_{t,r}^x))\right]dr.
	\end{equation}
	The reverse direction does not work in general, because differentiability of the function in (\ref{MLPINTROPDE_Feynman-Kac}) is not necessarily given. This would need sufficient smoothness of $ \mu,\sigma,f, $ and $ g. $ Fortunately, one can prove under more relaxed conditions, that (\ref{MLPINTROPDE_Feynman-Kac}) yields a unique, so called \textit{viscosity solution} of the PDE in (\ref{MLPINTROPDE}), which is a generalized concept of classical solutions. For an introduction and a review on the theory of viscosity solutions we refer to \cite{users_guide_viscosity_solutions}.
	In particular, every classical solution of (\ref{MLPINTROPDE}) is also a viscosity solution of (\ref{MLPINTROPDE}), hence we get a one-to-one correspondence between both representations. 
	Having established this connection, we may want to employ a numerical approximation of (\ref{MLPINTROPDE_Feynman-Kac}) by means of Monte Carlo methods. 
	The second term on the right-hand side of (\ref{MLPINTROPDE_Feynman-Kac}), however, is problematic since we need to know the very thing that we wish to approximate, $ u $.
	A similar problem arises in the field of ordinary differential equations: let $ y\colon [0,T]\rightarrow \R $ satisfy $ y(0)=y_0 $ and for all $ \frakt\in (0,T) $ that $ y' =\frakf(y), $ where $ \frakf \in C(\R,\R). $
	This can be reformulated to 
	\begin{equation}\label{MLPINTRO_ODE_INT}
		y(\frakt)=y_0+\int_{0}^\frakt \frakf(y(s)) ds.
	\end{equation}
	If we assume the function $ \frakf $ to satisfy a (local) Lipschitz condition, 
	it is ensured that for sufficiently small $ \frakt\in(0,T] $, the \textit{Picard iterations}
	\begin{equation}
		\begin{aligned}
			y^{[0]}(\frakt)&=y_0,\\
			y^{[n+1]}(\frakt)&=y_0+\int_{0}^\frakt \frakf(y^{[n]}(s))ds, \quad n\in\N_0
		\end{aligned}
	\end{equation}
	converge to a unique solution of (\ref{MLPINTRO_ODE_INT}), cf., e.g., [\citen{WolfgangWalterODE}, §6]. 
	Following this idea, we reformulate (\ref{MLPINTROPDE_Feynman-Kac}) as stochastic fixed point equation:
	\begin{equation}\label{INTROMLP_SFPE}
		\begin{aligned}
			\Phi^{0}(t,x)&=\E\left[g(X_{t,T}^x)\right]\\
			\Phi^{n+1}(t,x)&=\E\left[g(X_{t,T}^x)\right]+\int_t^T\E\left[f(\Phi^{n}(s,X_{t,s}^x))\right]ds, \quad n\in\N_0.
		\end{aligned}
	\end{equation}
	Indeed, the proof of [\citen{hutzenthaler2020multilevel}, Proposition 2.2] provides  a Banach space $ (\calV, \nrm{\cdot}_{\calV}), \calV \subset \R^{[0,T]\times \R^d}, $  in which the mapping $ \Phi\colon\calV\rightarrow\calV $ satisfying for all $ t\in[0,T], x\in\R^d, v\in\calV $ that 
	\begin{equation}
		(\Phi(v))(t,x)=\E\left[g(X_{t,T}^x)\right]+\int_t^T \E\left[f(v(s,X_{t,s}^x))\right]ds
	\end{equation}
	is, under suitable assumptions, a contraction. Banach's fixed point theorem thus provides existence of a unique fixed point $ u \in \calV $ with $ \Phi(u)=u $ and proves convergence of (\ref{INTROMLP_SFPE}).
	In order to derive an approximation scheme for the right-hand side of (\ref{INTROMLP_SFPE}), we employ a telescoping sum argument and define $ \Phi^{-1}\equiv 0 $, which gives us for all $ n\in\N, t\in[0,T],x\in\R^d $
	\begin{equation}\label{INTROMLP_SFPE2}
		\begin{aligned}
			\Phi^{n+1}(t,x)&=\E\left[g(X_{t,T}^x)\right]+\int_t^T\E\left[f(\Phi^n(s,X_{t,s}^x))\right]ds\\
			&=\E\left[g(X_{t,T}^x)\right]+\int_t^T\E\left[f(\Phi^0(s,X_{t,s}^x))\right]+\sum_{l=1}^n\E\left[f(\Phi^l(s,X_{t,s}^x))-f(\Phi^{l-1}(s,X_{t,s}^x))\right]ds\\
			&=\E\left[g(X_{t,T}^x)\right]+\sum_{l=0}^n\int_t^T\E\left[f(\Phi^l(s,X_{t,s}^x))-\mathbbm{1}_\N(l)f(\Phi^{l-1}(s,X_{t,s}^x))\right]ds.\\
		\end{aligned}
	\end{equation}
	With a standard Monte Carlo approach we can approximate $ \E[g(X_{t,T}^x)] $ efficiently. For the sum on the right-hand side of above equation, note that 
	\begin{enumerate}[a)]
		\item\label{2Intro_a} one can, for all $t\in[0,T], h\in L^1([t,T],\mathcal{B}([t,T]), \lambda|_{[t,T]};\R) $, approximate $ \int_t^Th(s)d\lambda(s) $ via Monte Carlo averages by considering the probability space $ ([t,T], \mathcal{B}([t,T]), \frac{1}{T-t}\lambda|_{[t,T]}) $ and sampling independent, continuous uniformly distributed $ \tau_0,...,\tau_M\colon\Omega\rightarrow[t,T], M\in\N $. This yields \begin{equation}
			\int_t^Th(s)d\lambda(s)=(T-t)\int_t^Th(s)d\frac{1}{T-t}\lambda|_{[t,T]}(s)\approx\frac{T-t}{M}\sum_{k=0}^Mh(\tau_k).
		\end{equation}
		\item\label{2Intro_b} due to the recursive structure of (\ref{INTROMLP_SFPE2}), summands of large $ l $ are costly to compute, but thanks to convergence, tend to be of small variance. In order to achieve a prescribed approximation accuracy, these summands will only need relatively few samples for a suitable Monte-Carlo average. 
		On the other hand, summands of small $ l $ are cheap to compute, but have large variance, implying the need of relatively many samples for a Monte-Carlo average that satisfies a prescribed approximation accuracy. This is the key insight, which motivates the multilevel approach.
	\end{enumerate}
	To present the multilevel approach, we need a larger index set. Let $ \Theta =\cup_{n \in \N}\mathbb{Z}^n $. Let  $ Y_{t}^{\theta,x}\colon[t,T]\times\Omega \rightarrow\R^d ,\theta\in\Theta, t\in[0,T], x\in\R^d$ be independent Euler-Maruyama approximations of $ X_t^x $ (see (\ref{2SettingEulerMaruyama}) below for a concise definition).
	Let $ \frakT^\theta_t\colon\Omega\rightarrow[t,T] ,\theta\in\Theta, t\in[0,T],$  be independent, continuous, uniformly distributed random variables. In particular, let $ (Y_t^{\theta,x})_{\theta\in\Theta} $ and $ (\frakT_t^\theta)_{\theta\in\Theta} $ be independent. 
	Combining this with (\ref{INTROMLP_SFPE2}), \ref{2Intro_a}) and \ref{2Intro_b}) gives us  the desired multilevel Picard approximations (see (\ref{2SettingMLP}) below for a concise definition):
	\begin{equation}
		\begin{aligned}
			&U_{n,M}^\theta(t,x)= \frac{\mathbbm{1}_{\N}(n)}{M^n}\sum_{i=0}^{M^n}g(Y_{t,T}^{(\theta,0,-i),x})\\
			&\quad +\sum_{l=0}^{n-1}\frac{T-t}{M^{n-l}}\sum_{i=0}^{M^{n-l}}\left(f\circ U_{l,M}^{(\theta,l,i)}-\mathbbm{1}_{\N}(l)f\circ U_{l-1,M}^{(\theta,-l,i)}\right)\left(\frakT_t^{(\theta,l,i)}, Y_{t,\frakT_t^{(\theta,l,i)}}^{(\theta,l,i),x}\right).
		\end{aligned}
	\end{equation}
	In section \ref{sect3} we will show that deep neural networks are capable to represent MLP approximations, if $ \mu, \sigma, f, $ and $ g $ are DNN functions. For this reason, we assume in section \ref{sect2} that for a given PDE such as (\ref{MLPINTROPDE}) with coefficient functions $ \mu_1,\sigma_1, f_1, $ and $ g_1 $, we can approximate its coefficients by (DNN functions) $ \mu_2,\sigma_2, f_2, $ and $ g_2, $ respectively. 
	Thus we have to take two sources of error into account: 
	\begin{enumerate}[(i)]
		\item the perturbation error of the PDE. We will utilize stochastic tools to get an estimate of $ |u_1(t,x)-u_2(t,x)|, $ where for each $ i\in\{1,2\} $ we have 
		\begin{equation}
			u_i(t,x)=\E\left[g_i(X_{t,T}^{x,i})\right]+\int_t^T\E\left[f(u_i(s,X_{t,s}^{x,i}))\right]ds,
		\end{equation}
		$ X_{t}^{x,i} $ solving the stochastic differential equation (SDE) with drift $ \mu_i $ and diffusion $ \sigma_i $ with respect to W, starting in $ (t,x)\in[0,T]\times\R^d $ (cf. (\ref{MLPINTROSDE}), (\ref{MLPINTROPDE_Feynman-Kac}), and Lemma \ref{newfullperturbation} below).
		\item the MLP error $ \left(\E\left[\left|\mathscr{U}_{n,M}^0(t,x)-u_2(t,x)\right|^2\right]\right)^{\frac{1}{2}}, $ where $ \mathscr{U}_{n,M}^\theta, \theta\in\Theta, $ are the MLP approximations built by the coefficient functions $ \mu_2,\sigma_2,f_2, $ and $ g_2$.
	\end{enumerate}
	With that, the triangle inequality enables us to find a boundary for the full error:
	\begin{equation}
		\left(	\E\left[\left|\mathscr{U}_{n,M}^0(t,x)-u_1(t,x)\right|^2\right]\right)^{\frac{1}{2}}\le\left(\E\left[\left|\mathscr{U}_{n,M}^0(t,x)-u_2(t,x)\right|^2\right]\right)^{\frac{1}{2}}+\left|u_2(t,x)-u_1(t,x)\right|.
	\end{equation}

	\section{A perturbation result for MLP-approximations}\label{sect2}
	Throughout this section, let $ \langle\cdot,\cdot\rangle\colon \cup_{d \in \N}\R^d\times\R^d\rightarrow \R, $  $\nrm{\cdot}\colon\cup_{d \in \N}\R^d\rightarrow[0,\infty),  $ $ \nrm{\cdot}_F\colon \cup_{d,m \in \N}\R^{d\times m}\rightarrow[0,\infty) $ satisfy for all $ d,m\in\N, $ $ x=(x_1,...,x_d), y=(y_1,...,y_d) \in\R^d, $ $ A= (a_{i,j})_{i\in \{1,...,d\},j\in \{1,...,m\}} \in \R^{d\times m} $ that $ \langle x,y\rangle = \sum_{i=1}^d x_iy_i$, $ \nrm{x}=\left(\sum_{i=1}^d |x_i|^2\right)^{\frac{1}{2}} $, $ \nrm{A}_F=\left(\sum_{i=1}^d\sum_{j=1}^m |a_{i,j}|^2\right)^{\frac{1}{2}}. $ 
	\subsection{An estimate for Lyapunov-type functions.}
	We start  with an auxiliary result, which states that we can raise certain Lyapunov-type functions to any power, without losing its growth property.
\begin{lemma}\label{LyapunovLemmaNew}
	\begin{enumerate}[(i)]
		\item 		Let $ d,m\in \N, c,\kappa\in [1,\infty),  p\in [2,\infty),$ let $ \varphi \in C^2(\R^d,[1,\infty)), \mu\in C(\R^d,\R^d), \sigma \in C(\R^d, \R^{d\times m}) $ satisfy for all $ x,y\in \R^d, z\in\R^d\setminus\{0\}  $ that
		\begin{equation}\label{LyapunovLemma1}
			\max\left \{ \frac{\langle (\nabla \varphi)(x),z\rangle }{(\varphi(x))^{\frac{p-1}{p}}\nrm{z}}, \frac{\langle z,(\textup{Hess}\varphi(x))z\rangle}{(\varphi(x))^{\frac{p-2}{p}}\nrm{z}^2}, \frac{c\nrm{x}+\max\{\nrm{\mu(0)},\nrm{\sigma(0)}_F\}}{(\varphi(x))^{\frac{1}{p}}} \right \}\le c,
		\end{equation}
		and \begin{equation}\label{LyapunovLemma2}
			\max\{\nrm{\mu(x)-\mu(y)}, \nrm{\sigma(x)-\sigma(y)}_F\}\le c\nrm{x-y}.
		\end{equation}
		Then it holds for all $ x\in \R^d $ that
		\begin{equation}
			\left|\langle \nabla (\varphi(x))^\kappa, \mu(x)\rangle\right|+\frac{1}{2}\textup{Tr}(\sigma(x)[\sigma(x)]^*(\textup{Hess}(\varphi(x))^\kappa)) \le \frac{\kappa}{2}((\kappa-1)c^4+3c^3)(\varphi(x))^\kappa.
		\end{equation}
		\item Additionally let $ T\in(0,\infty) $, $ (\Omega,\mathcal{F},\mathbb{P},(\F_t)_{t\in[0,T]}) $ be a filtered probability space satisfying the usual conditions, let $ W\colon[0,T]\times \Omega\rightarrow\R^m $ be a standard $ (\F_t)_{t\in[0,T]} $-Brownian motion, for all $ x\in \R^d,t\in[0,T] $ let $ X_{t}^x\colon [t,T]\times \Omega\rightarrow\R^d $ be an $ (\F_s)_{s\in[t,T]} $-adapted stochastic process satisfying $ \mathbb{P} $-a.s. for all $ s\in [t,T] $
		\begin{equation}\label{LyapunovLemma3}
			X_{t,s}^x=x+\int_t^s\mu(X_{t,r}^x)dr+\int_t^s\sigma(X_{t,r}^x)dW_r.
		\end{equation}
		Then it holds for all $ t\in [0,T], s\in [t,T], x\in \R^d $ that
		\begin{equation}
			\E\left[(\varphi(X_{t,s}^x))^\kappa\right]\le e^{\frac{1}{2}\kappa((\kappa-1)c^4+3 c^3)(s-t)}(\varphi(x))^\kappa.
		\end{equation}
	\end{enumerate}
\end{lemma}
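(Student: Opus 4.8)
\emph{Proof proposal.} The plan is to establish (i) by a direct pointwise computation and then deduce (ii) by a standard Lyapunov-type argument based on It\^o's formula and Gronwall's lemma. For part (i), I would first record the chain-rule identities, valid for all $x\in\R^d$,
\begin{equation*}
\nabla\big((\varphi(x))^\kappa\big)=\kappa(\varphi(x))^{\kappa-1}(\nabla\varphi)(x),\qquad \textup{Hess}\big((\varphi(x))^\kappa\big)=\kappa(\kappa-1)(\varphi(x))^{\kappa-2}(\nabla\varphi)(x)[(\nabla\varphi)(x)]^*+\kappa(\varphi(x))^{\kappa-1}(\textup{Hess}\varphi)(x),
\end{equation*}
which are legitimate since $\varphi\in C^2(\R^d,[1,\infty))$. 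Next, (\ref{LyapunovLemma2}) and the triangle inequality give $\nrm{\mu(x)}\le c\nrm{x}+\nrm{\mu(0)}$ and $\nrm{\sigma(x)}_F\le c\nrm{x}+\nrm{\sigma(0)}_F$, so that the third term in the maximum in (\ref{LyapunovLemma1}) yields both $\nrm{\mu(x)}\le c(\varphi(x))^{1/p}$ and $\nrm{\sigma(x)}_F\le c(\varphi(x))^{1/p}$.

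Applying the first estimate in (\ref{LyapunovLemma1}) with $z=\mu(x)$ and with $z=-\mu(x)$ (the case $\mu(x)=0$ being trivial) then gives $|\langle(\nabla\varphi)(x),\mu(x)\rangle|\le c(\varphi(x))^{(p-1)/p}\nrm{\mu(x)}\le c^2\varphi(x)$, hence $|\langle\nabla((\varphi(x))^\kappa),\mu(x)\rangle|\le\kappa c^2(\varphi(x))^\kappa$. For the second-order term, writing $e_1,\dots,e_m$ for the standard basis of $\R^m$ and using $\textup{Tr}(\sigma(x)[\sigma(x)]^*A)=\sum_{j=1}^m\langle\sigma(x)e_j,A\,\sigma(x)e_j\rangle$ for $A\in\R^{d\times d}$ together with $\sum_{j=1}^m\nrm{\sigma(x)e_j}^2=\nrm{\sigma(x)}_F^2$, the first estimate in (\ref{LyapunovLemma1}) applied with $z=\sigma(x)e_j$ (the terms with $\sigma(x)e_j=0$ vanishing) gives $\textup{Tr}(\sigma(x)[\sigma(x)]^*(\nabla\varphi)(x)[(\nabla\varphi)(x)]^*)=\sum_{j=1}^m\langle(\nabla\varphi)(x),\sigma(x)e_j\rangle^2\le c^2(\varphi(x))^{2(p-1)/p}\nrm{\sigma(x)}_F^2\le c^4(\varphi(x))^2$, while the second estimate gives $\textup{Tr}(\sigma(x)[\sigma(x)]^*(\textup{Hess}\varphi)(x))\le c(\varphi(x))^{(p-2)/p}\nrm{\sigma(x)}_F^2\le c^3\varphi(x)$. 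Plugging these (and $\kappa(\kappa-1)(\varphi(x))^{\kappa-2}\ge0$, $\kappa(\varphi(x))^{\kappa-1}>0$) into the Hessian identity yields $\tfrac12\textup{Tr}(\sigma(x)[\sigma(x)]^*\textup{Hess}((\varphi(x))^\kappa))\le\tfrac{\kappa}{2}\big((\kappa-1)c^4+c^3\big)(\varphi(x))^\kappa$; adding the gradient estimate and using $c\ge1$ (so that $\kappa c^2\le\kappa c^3=\tfrac{\kappa}{2}(3c^3-c^3)$) gives part (i).

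For part (ii), I would argue by localization. Fix $t\in[0,T]$, $s\in[t,T]$, $x\in\R^d$, and for $n\in\N$ set $\tau_n:=\inf\{r\in[t,s]:\varphi(X_{t,r}^x)\ge n\}$ with $\inf\emptyset:=s$. Since the third term in (\ref{LyapunovLemma1}) forces $\nrm{y}\le(\varphi(y))^{1/p}$ and hence the closed set $\{y\in\R^d:\varphi(y)\le n\}$ to be bounded, and since $[t,s]\ni r\mapsto X_{t,r}^x$ is $\mathbb{P}$-a.s.\ continuous, the stopped process $r\mapsto X_{t,r\wedge\tau_n}^x$ stays in a compact set on which $\varphi,\nabla\varphi,\mu,\sigma$ are bounded; in particular $(\varphi(X_{t,r\wedge\tau_n}^x))^\kappa\le n^\kappa$ and the local-martingale part arising from applying It\^o's formula to $r\mapsto(\varphi(X_{t,r\wedge\tau_n}^x))^\kappa$ is a genuine martingale. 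Taking expectations in It\^o's formula, bounding the drift integrand via part (i) by $\tfrac{\kappa}{2}((\kappa-1)c^4+3c^3)(\varphi(X_{t,r}^x))^\kappa\mathbbm{1}_{\{r<\tau_n\}}\le\tfrac{\kappa}{2}((\kappa-1)c^4+3c^3)(\varphi(X_{t,r\wedge\tau_n}^x))^\kappa$, and invoking Tonelli's theorem, I obtain, with $C:=\tfrac{\kappa}{2}((\kappa-1)c^4+3c^3)$ and $h_n(r):=\E[(\varphi(X_{t,r\wedge\tau_n}^x))^\kappa]$,
\begin{equation*}
h_n(s)\le(\varphi(x))^\kappa+C\int_t^s h_n(r)\,dr.
\end{equation*}
As $h_n$ is continuous and bounded by $n^\kappa$, Gronwall's inequality gives $h_n(s)\le e^{C(s-t)}(\varphi(x))^\kappa$. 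Finally, $\mathbb{P}$-a.s.\ continuity of $r\mapsto X_{t,r}^x$ on $[t,s]$ implies $\sup_{r\in[t,s]}\varphi(X_{t,r}^x)<\infty$ $\mathbb{P}$-a.s., hence $\tau_n=s$ for all large enough $n$ and $X_{t,s\wedge\tau_n}^x\to X_{t,s}^x$ $\mathbb{P}$-a.s., so Fatou's lemma yields $\E[(\varphi(X_{t,s}^x))^\kappa]\le\liminf_{n\to\infty}h_n(s)\le e^{C(s-t)}(\varphi(x))^\kappa$, as claimed.

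I expect the pointwise estimates in part (i) to be routine bookkeeping with (\ref{LyapunovLemma1})--(\ref{LyapunovLemma2}). The step requiring the most care is the localization in part (ii): one must verify that the sublevel set $\{\varphi\le n\}$ is bounded and that the stopped stochastic integral genuinely has mean zero, so that expectations may legitimately be taken in It\^o's formula before the Gronwall and Fatou arguments.
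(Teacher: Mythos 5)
Your proposal is correct. Part (i) follows essentially the same route as the paper: the same chain-rule identities for $\nabla(\varphi^\kappa)$ and $\textup{Hess}(\varphi^\kappa)$, the same linear-growth bound $\max\{\nrm{\mu(x)},\nrm{\sigma(x)}_F\}\le c(\varphi(x))^{1/p}$ deduced from \eqref{LyapunovLemma1}--\eqref{LyapunovLemma2}, and the same assembly of constants using $c\ge 1$; your only deviation is to estimate $\textup{Tr}(\sigma\sigma^*\nabla\varphi[\nabla\varphi]^*)$ by decomposing over the columns $\sigma(x)e_j$ and applying the gradient bound in \eqref{LyapunovLemma1} to each $z=\pm\sigma(x)e_j$, whereas the paper uses Cauchy--Schwarz with Frobenius norms; both give $c^4(\varphi(x))^2$. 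The genuine difference is in part (ii): the paper simply invokes an external Lyapunov-type estimate (Lemma 2.2 of the cited work of Cox et al.) with a suitable substitution, while you give a self-contained proof via It\^o's formula, localization at the stopping times $\tau_n$ (using that $\nrm{y}\le(\varphi(y))^{1/p}$ makes sublevel sets of $\varphi$ bounded, so the stopped integrand is bounded and the stochastic integral has mean zero), Gronwall's inequality for $h_n$, and Fatou's lemma. Your argument is longer but makes the probabilistic mechanism explicit and removes the dependence on the external reference; the paper's citation buys brevity and delegates the continuity/localization technicalities (which you correctly identify as the delicate points, and which your sketch handles adequately, granting the implicit a.s.\ path continuity of $X_{t,\cdot}^x$ that the SDE representation \eqref{LyapunovLemma3} provides).
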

\begin{proof}
	Note that for all $ x\in\R^d $ it holds that $ \nabla(\varphi(x))^\kappa =\kappa \varphi^{\kappa-1}(x)(\nabla\varphi)(x)$ and 
	\begin{equation}
		\textup{Hess}(\varphi(x))^\kappa=\kappa(\kappa-1)(\varphi(x))^{\kappa-2} \nabla(\varphi(x))[\nabla(\varphi(x))]^*+\kappa (\varphi(x))^{\kappa-1}\textup{Hess}\varphi(x).
	\end{equation}
	The triangle inequality, (\ref{LyapunovLemma1}), and (\ref{LyapunovLemma2}) ensure for all $ x\in \R^d $ that
	\begin{equation}
		\begin{aligned}
			\max\{\nrm{\mu(x)},\nrm{\sigma(x)}_F\}&\le c\nrm{x}+\max\{\nrm{\mu(0)}, \nrm{\sigma(0)}_F\} \le c (\varphi(x))^{\frac{1}{p}}.
		\end{aligned}
	\end{equation}
	This and  (\ref{LyapunovLemma1}) ensure for all $ x\in\R^d $  that
	\begin{equation}
		\begin{aligned}
			\textup{Tr}(\sigma(x)[\sigma(x)]^*\textup{Hess}\varphi(x))&\le c \nrm{\sigma(x)}_F^2(\varphi(x))^{\frac{p-2}{p}} \le c^3 \varphi(x).
		\end{aligned}
	\end{equation}
	The Cauchy-Schwarz inequality and (\ref{LyapunovLemma1}) ensure for all $ x\in \R^d  $ that 
	\begin{equation}
		\begin{aligned}
			\textup{Tr}(\sigma(x)[\sigma(x)]^*\nabla\varphi(x)[\nabla\varphi(x)]^*)
			&\le \nrm{\sigma(x)[\sigma(x)]^*}_F\nrm{\nabla\varphi(x)[\nabla\varphi(x)]^*}_F\\
			&\le \nrm{\sigma(x)}^2_F\nrm{\nabla\varphi(x)}^2 \le c^4(\varphi(x))^2.
		\end{aligned}
	\end{equation}
	It thus holds for all $ x\in \R^d $ that 
	\begin{equation}
		\begin{aligned}
			\textup{Tr}(\sigma(x)[\sigma(x)]^*\textup{Hess}((\varphi(x))^\kappa))&=\kappa(\kappa-1)(\varphi(x))^{\kappa-2}\textup{Tr}(\sigma(x)[\sigma(x)]^*\nabla\varphi(x)[\nabla\varphi(x)]^*)\\
			&\quad + \kappa(\varphi(x))^{\kappa-1}\textup{Tr}(\sigma(x)[\sigma(x)]^*\textup{Hess}\varphi(x))\\
			&\le \kappa(\kappa-1)c^4(\varphi(x))^\kappa+ \kappa c^3(\varphi(x))^{\kappa}.
		\end{aligned}
	\end{equation}
	This and (\ref{LyapunovLemma1}) ensure for all $ x\in \R^d $ that 
	\begin{equation}
		\begin{aligned}
			&\left|\langle \nabla (\varphi(x))^\kappa, \mu(x)\rangle\right|+\frac{1}{2}\textup{Tr}(\sigma(x)[\sigma(x)]^*\textup{Hess}(\varphi(x))^\kappa)\\
			&\le \kappa (\varphi(x))^\kappa\left|\langle(\nabla\varphi)(x),\mu(x)\rangle\right|+\frac{1}{2}\kappa((\kappa-1)c^4+c^3)(\varphi(x))^\kappa\\
			&\le c^2\kappa (\varphi(x))^\kappa +\frac{1}{2}\kappa((\kappa-1)c^4+c^3)(\varphi(x))^\kappa\\
			&\le (\frac{1}{2}\kappa(\kappa-1)c^4+\frac{3}{2}\kappa c^3)(\varphi(x))^\kappa,
		\end{aligned}
	\end{equation}
	which establishes item $ (i) $. This and [\citen{cox2021local}, Lemma 2.2] (applied for every $ x\in \R^d, t\in [0,T], s\in [t,T] $ with $ T\curvearrowleft T-t,$  $ O \curvearrowleft \R^d,$  $ V \curvearrowleft ([0,T-t]\times \R^d \ni (s,x)\rightarrow(\varphi(x))^\kappa \in [1,\infty)), $ 
	$ \alpha \curvearrowleft ( [0,T-t]\ni s\rightarrow(\frac{1}{2}\kappa(\kappa-1)c^4+\frac{3}{2}\kappa c^3)\in [0,\infty) ), \tau \curvearrowleft s-t, X \curvearrowleft (X_{t,t+r}^x)_{r\in [0,T-t]},$ in the notation of [\citen{cox2021local}, Lemma 2.2]) proves that for all $ x\in \R^d, t\in [0,T], s\in [t,T] $ we have that 
	\begin{equation}
		\E\left[\varphi(X_{t,s}^x)\right]\le e^{\frac{1}{2}\kappa((\kappa-1)c^4+3c^3)(s-t)}(\varphi(x))^\kappa.
	\end{equation}
	This establishes item $ (ii) $ and thus finishes the proof.
\end{proof}
\subsection{A perturbation estimate for solutions of PDEs}
\begin{setting}\label{setting2new}
	Let $ d,m \in \N, $ $ \delta, T \in (0,\infty), $ $ \beta, b, c \in [1,\infty),  $ $ q\in [2,\infty), p\in [2\beta, \infty), $  
	for  all $ i\in\{1,2\} $ let $ \varphi\in C^2(\R^d,[1,\infty)),$ $ g_i\in C(\R^d,\R), \mu_i \in C(\R^d,\R^d), \sigma_i \in C(\R^d,\R^{d\times m}), $ $ f_i \in C(\R,\R), F_i\colon \R^{[0,T]\times\R^d}\rightarrow \R^{[0,T]\times \R^d} $ satisfy for all $ t\in [0,T], v,w\in \R, x,y\in\R^d, z\in \R^d\setminus\{0\}, u\colon[0,T]\times \R^d\rightarrow \R $ that $ (F_i(u))(t,x) =f_i(t,x,u(t,x)),$
	\begin{equation}\label{setting2new_Lyapunov}
		\max\{ \frac{\langle (\nabla \varphi)(x),z\rangle }{(\varphi(x))^{\frac{p-1}{p}}\nrm{z}}, \frac{\langle z,(\textup{Hess}\varphi(x))z\rangle}{(\varphi(x))^{\frac{p-2}{p}}\nrm{z}^2}, \frac{c\nrm{x}+\max\{\nrm{\mu_i(0)},\nrm{\sigma_i(0)}_F\}}{(\varphi(x))^{\frac{1}{p}}} \}\le c,
	\end{equation}
	\begin{equation}\label{setting2new_growth_g_f}
		\max\{T|f_i(t,x,0)|, |g_i(x)|\}\le b(\varphi(x))^{\frac{\beta}{p}},
	\end{equation}
	\begin{equation}\label{setting2new_localLipschitz_g_f}
		\max\{ |g_i(x)-g_i(y)|, T|f_i(t,x,v)-f_i(t,y,w)|\}\le cT|v-w|+ \frac{(\varphi(x)+\varphi(y))^{\frac{\beta}{p}}\nrm{x-y}}{T^{\frac{1}{2}}b^{-1}},
	\end{equation}
	\begin{equation}\label{setting2new_Lipschitz_mu_sigma}
		\max\{\nrm{\mu_i(x)-\mu_i(y)}, \nrm{\sigma_i(x)-\sigma_i(y)}_F\}\le c\nrm{x-y},
	\end{equation}
	\begin{equation}\label{setting2new_g_f_mu_sigma_approx}\hspace{-.25cm}
		\max\{|f_1(t,x,v)-f_2(t,x,v)|,|g_1(x)-g_2(x)|, \nrm{\mu_1(x)-\mu_2(x)}, \nrm{\sigma_1(x)-\sigma_2(x)}_F\}\le \delta ((\varphi(x))^q+|v|^q),
	\end{equation}
	let $ (\Omega, \mathcal{F}, \mathbb{P},(\F_t)_{t\in[0,T]}) $ be a filtered probability space satisfying the usual conditions, let $ W\colon [0,T]\times \Omega \rightarrow \R^m $ be a standard $ (\F_t)_{t\in[0,T]} $-Brownian motion, for all $ x\in \R^d, t\in [0,T],i\in \{1,2\} $ let $ X_t^{x,i}\colon [t,T]\times\Omega\rightarrow \R^d $ be an $ (\F_s)_{s\in[t,T]} $-adapted stochastic process satisfying for all $ s\in [t,T] $ $ \mathbb{P} $-a.s. 
	\begin{equation}\label{setting2new_X}
		X_{t,s}^{x,i}=x+\int_t^s \mu_i(X_{t,r}^{x,i})dr+\int_t^s \sigma_i(X_{t,r}^{x,i})dW_r,
	\end{equation}
	for all $ t\in [0,T], s\in [t,T] , r\in [s,T], x\in \R^d, i\in \{1,2\}$ let 
	\begin{equation}\label{setting2new_X_pathwiseUnique}
		\mathbb{P}\left(X_{s,r}^{X_{t,s}^{x,i},i}=X_{t,r}^{x,i}\right)=1,
	\end{equation}
	let $ u_1,u_2\in C([0,T]\times \R^d,\R) ,$ assume for all $ i\in \{1,2\}, t\in[0,T], x\in\R^d $ that
	\begin{equation}\label{setting2new_u_intble}
		\left(\sup_{s\in[0,T]}\sup_{y\in\R^d}\frac{|u_i(s,y)|}{\varphi(y)} \right)+\E\left[\left|g_i(X_{t,T}^{x,i})\right| \right]  +\int_t^T\E\left[ \left|(F_i(u_i))(s,X_{t,s}^{x,i})\right| \right] ds<\infty,
	\end{equation}
	\begin{equation}\label{setting2new_u}
		u_i(t,x)=\E\left[g_i(X_{t,T}^{x,i})\right]+\int_t^T\E\left[(F_i(u_i))(s,X_{t,s}^{x,i})\right]ds.
	\end{equation}
\end{setting}
\begin{lemma}\label{u_moment_new}
	Assume Setting \ref{setting2new} and let $x\in\R^d, t\in[0,T], s\in [t,T] $. Then it holds that
	\begin{equation}
		\E\left[\left| u_2(s,X_{t,s}^{x,2}) \right|^q\right]\le 2^{q-1}b^q(T+1)^q (\varphi(x))^{\frac{\beta q}{p}}e^{\frac{\beta q}{2p}((q-1)c^4+3c^3)(T-t)+c^q(T-s)^q}.
	\end{equation}
\end{lemma}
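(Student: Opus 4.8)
The plan is to reduce the claim to a Grönwall-type inequality for the map $[t,T]\ni s\mapsto (\E[|u_2(s,X_{t,s}^{x,2})|^q])^{1/q}$. Three ingredients enter: (a) the stochastic fixed-point identity \eqref{setting2new_u} together with the flow property \eqref{setting2new_X_pathwiseUnique} (which, with adaptedness, gives the Markov property of $X^{\cdot,2}$), used to represent $u_2(s,X_{t,s}^{x,2})$ as a conditional expectation given $\F_s$; (b) the growth and Lipschitz hypotheses \eqref{setting2new_growth_g_f} and \eqref{setting2new_localLipschitz_g_f}, which, upon taking $x=y$ and $w=0$ in \eqref{setting2new_localLipschitz_g_f}, yield the pointwise bounds $|g_2(y)|\le b(\varphi(y))^{\beta/p}$ and $|f_2(r,y,v)|\le \tfrac bT(\varphi(y))^{\beta/p}+c|v|$ for all $r\in[0,T]$, $y\in\R^d$, $v\in\R$; and (c) Lemma~\ref{LyapunovLemmaNew}(ii) applied to $\varphi,\mu_2,\sigma_2$ — whose hypotheses \eqref{LyapunovLemma1}--\eqref{LyapunovLemma2} hold because they coincide with \eqref{setting2new_Lyapunov} and \eqref{setting2new_Lipschitz_mu_sigma} for $i=2$ — to control moments of $\varphi(X_{t,r}^{x,2})$.

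Concretely, I would first record, via Lemma~\ref{LyapunovLemmaNew}(ii) with $\kappa=q$ followed by Jensen's inequality for the concave map $[0,\infty)\ni z\mapsto z^{\beta/p}$ (concave since $p\ge 2\beta$ forces $\beta/p\le\tfrac12\le 1$), that
\[
\E\!\left[(\varphi(X_{t,r}^{x,2}))^{\beta q/p}\right]\le e^{\frac{\beta q}{2p}((q-1)c^4+3c^3)(r-t)}(\varphi(x))^{\beta q/p}\qquad\text{for all }r\in[t,T].
\]
Then, applying \eqref{setting2new_u} at the (random) point $(s,X_{t,s}^{x,2})$ and using \eqref{setting2new_X_pathwiseUnique}, adaptedness and Fubini to move the time integral through the conditional expectation, one obtains
\[
u_2(s,X_{t,s}^{x,2})=\E\!\left[\,g_2(X_{t,T}^{x,2})+\int_s^T f_2\!\big(r,X_{t,r}^{x,2},u_2(r,X_{t,r}^{x,2})\big)\,dr\;\middle|\;\F_s\right].
\]
Writing $\zeta(s):=(\E[|u_2(s,X_{t,s}^{x,2})|^q])^{1/q}$ — which is finite because \eqref{setting2new_u_intble} gives $\sup_{r\in[0,T],y\in\R^d}|u_2(r,y)|/\varphi(y)<\infty$ and Lemma~\ref{LyapunovLemmaNew}(ii) bounds $\E[(\varphi(X_{t,s}^{x,2}))^q]$ — the conditional Jensen inequality, the triangle and Minkowski integral inequalities, the bounds in (b), and the displayed $\varphi$-moment estimate yield
\[
\zeta(s)\le b\Big(1+\tfrac{T-s}{T}\Big)e^{\frac{\beta}{2p}((q-1)c^4+3c^3)(T-t)}(\varphi(x))^{\beta/p}+c\int_s^T\zeta(r)\,dr .
\]

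Finally, the backward Grönwall inequality applied to this estimate gives $\zeta(s)\le 2b\,e^{\frac{\beta}{2p}((q-1)c^4+3c^3)(T-t)}(\varphi(x))^{\beta/p}\,e^{c(T-s)}$, and raising to the $q$-th power, using $1+\tfrac{T-s}{T}\le 2$ and elementary estimates available since $c\ge 1$ and $q\ge 2$, produces the asserted bound, with prefactor $2^{q-1}b^q(T+1)^q$ and exponent $\tfrac{\beta q}{2p}((q-1)c^4+3c^3)(T-t)+c^q(T-s)^q$. I expect the main obstacle to be the rigorous justification of the conditional-expectation representation of $u_2(s,X_{t,s}^{x,2})$ — turning the flow property \eqref{setting2new_X_pathwiseUnique} plus adaptedness into the precise Markov statement needed and legitimately interchanging the $dr$-integral with $\E[\,\cdot\,|\F_s]$ — together with the bookkeeping required to absorb all constants into the stated closed form; the Grönwall step and the moment estimates from Lemma~\ref{LyapunovLemmaNew} are then routine.
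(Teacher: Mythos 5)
Your overall strategy is the same as the paper's: finiteness of the $q$-th moment via \eqref{setting2new_u_intble} and Lemma \ref{LyapunovLemmaNew}(ii), the fixed-point identity \eqref{setting2new_u} composed with the flow property \eqref{setting2new_X_pathwiseUnique}, the pointwise bounds on $g_2$ and $f_2$ from \eqref{setting2new_growth_g_f}--\eqref{setting2new_localLipschitz_g_f}, the $\varphi$-moment estimate with $\kappa=q$ plus Jensen, and a Gronwall argument (the paper in fact writes the composition step as an identity in expectation, whereas you correctly flag that it needs a conditioning/freezing argument plus conditional Jensen; your treatment there is, if anything, more careful). The genuine difference is only at the Gronwall stage: you apply Minkowski's integral inequality to the $L^q$-norm $\zeta(s)$ and run a linear Gronwall with rate $c$, while the paper raises to the $q$-th power, uses Hölder in time to produce a factor $(T-s)^{q-1}$, and applies Gronwall to $\E[|u_2(s,X_{t,s}^{x,2})|^q]$ with coefficient $2^{q-1}c^q(T-s)^{q-1}$.

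This difference is where your argument breaks: your route ends with the factor $2^qb^qe^{cq(T-s)}$ (times the common factor $(\varphi(x))^{\beta q/p}e^{\frac{\beta q}{2p}((q-1)c^4+3c^3)(T-t)}$), and the claim that "elementary estimates" turn this into the stated $2^{q-1}b^q(T+1)^qe^{c^q(T-s)^q}$ is false: the comparison reduces exactly to $2e^{cq(T-s)}\le (T+1)^qe^{c^q(T-s)^q}$, which fails, e.g., for $c=1$, $q=2$, $T=T-s=\tfrac12$, where the left-hand side is $2e\approx 5.44$ and the right-hand side is $\tfrac94e^{1/4}\approx 2.89$. The point is that for $T-s<1$ and $c$ close to $1$ one has $c^q(T-s)^q\ll cq(T-s)$, and the prefactor $(T+1)^q/2$ cannot absorb the discrepancy when $T$ is small; linearizing the time integral via Minkowski structurally produces $e^{cq(T-s)}$ rather than the $e^{O(c^q(T-s)^q)}$ behaviour of the target. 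To land on a bound of the stated form you should Gronwall the $q$-th moment as the paper does (Hölder in $r$ over $[s,T]$, then $(a+b)^q\le 2^{q-1}(a^q+b^q)$), which makes the Gronwall coefficient $2^{q-1}c^q(T-s)^{q-1}$ and hence the exponent of order $c^q(T-s)^q$. Be warned, though, that even the paper's own concluding display ($2^qb^qe^{\cdots+2^{q-1}c^q(T-s)^q}$) does not literally match the constants in the lemma statement, so the final bookkeeping here requires genuine care in either approach rather than being waved through as routine.
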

\begin{proof}
	Display (\ref{setting2new_u_intble}) and Lemma \ref{LyapunovLemmaNew} $ (ii) $ (applied with $ \kappa\curvearrowleft q $ in the notation of Lemma \ref{LyapunovLemmaNew}) ensure  that 
	\begin{equation}\label{u_moment_proof1}
		\begin{aligned}
			\left(\E\left[|u_2(s,X_{t,s}^{x,2})|^q\right]\right)^{\frac{1}{q}}&\le \left(\E\left[\left(\frac{|u_2(s,X_{t,s}^{x,2})|}{\varphi(X_{t,s}^{x,2})}\varphi(X_{t,s}^{x,2})\right)^q\right]\right)^{\frac{1}{q}}\\
			&\le \left[\sup_{r\in[t,T]}\sup_{y\in\R^d}\frac{|u_2(r,y)|}{\varphi(y)}\right]\left(\E\left[\left(\varphi(X_{t,s}^{x,2})\right)^q\right]\right)^{\frac{1}{q}}\\
			&<\infty.
		\end{aligned}
	\end{equation}
	Note  that (\ref{setting2new_growth_g_f}), Jensen's inequality, and Lemma \ref{LyapunovLemmaNew} $ (ii) $ (applied with $ \kappa\curvearrowleft q $ in the notation of Lemma \ref{LyapunovLemmaNew}) imply that 
	\begin{equation}\label{u_moment_proof2}
		\begin{aligned}
			\left(\E\left[\left|g_2(X_{t,T}^{x,2})\right|^q\right]\right)^{\frac{1}{q}}&\le b\left(\E\left[\left(\varphi(X_{t,T}^{x,2})\right)^{\frac{\beta q}{p}}\right]\right)^{\frac{1}{q}}\\
			&\le b\left(  \E\left[\left(\varphi(X_{t,T}^{x,2})\right)^q\right] \right)^{\frac{\beta}{qp}}\\
			&\le  b e^{\frac{\beta}{2p}((q-1)c^4+3c^3)(T-t)}(\varphi(x))^{\frac{\beta}{p}}\\
		\end{aligned}
	\end{equation}
	The triangle inequality, (\ref{setting2new_localLipschitz_g_f}), (\ref{setting2new_growth_g_f}),  and Lemma \ref{LyapunovLemmaNew} $ (ii) $ (applied with $ \kappa \curvearrowleft \frac{q}{2} $ in the notation of Lemma \ref{LyapunovLemmaNew}) imply 
	\begin{equation}\label{u_moment_proof3}
		\begin{aligned}
			&\left(\int_s^T\E\left[\left|(F_2(u_2))(r,X_{t,r}^{x,2})\right|^q\right]dr\right)^{\frac{1}{q}}\\
			& \le \left(\int_s^T\E\left[\left|f_2(r,X_{t,r}^{x,2},u_2(r,X_{t,r}^{x,2}))-f_2(r,X_{t,r}^{x,2},0)\right|^q\right]dr\right)^{\frac{1}{q}}+ \left(\int_s^T\E\left[\left|f_2(r,X_{t,r}^{x,2},0)\right|^q\right]dr\right)^{\frac{1}{q}}\\
			&\le c\left( \int_s^T\E\left[\left|u_2(r,X_{t,r}^{x,2})\right|^q\right]dr  \right)^{\frac{1}{q}}+\frac{b}{T}\left(\int_s^T\E\left[(\varphi(X_{t,r}^{x,2}))^{\frac{\beta q}{p}}\right]dr\right)^{\frac{1}{q}}\\
			&\le c\left( \int_s^T\E\left[\left|u_2(r,X_{t,r}^{x,2})\right|^q\right]dr  \right)^{\frac{1}{q}}+\frac{b(T-s)^{\frac{1}{q}}}{T}\left(\sup_{r\in[s,T]}\E\left[(\varphi(X_{t,r}^{x,2}))^{\frac{\beta q}{p}}\right]\right)^{\frac{1}{q}}\\
			&\le  c\left( \int_s^T\E\left[\left|u_2(r,X_{t,r}^{x,2})\right|^q\right]dr  \right)^{\frac{1}{q}}+\frac{b(T-s)^{\frac{1}{q}}}{T}e^{\frac{\beta}{2p}((q-1)c^4+3c^3)(T-t)}(\varphi(x))^{\frac{\beta}{p}}.
		\end{aligned}
	\end{equation}
	This, (\ref{setting2new_X_pathwiseUnique}), the definition of $ u_2 $, the triangle inequality, Jensen's inequality, Fubini's theorem, and (\ref{u_moment_proof2}) ensure that 
	\begin{equation}\hspace{-0cm}
		\begin{aligned}
			&\E\left[|u_2(s,X_{t,s}^{x,2})|^q\right]=\left( \E\left[\left |g_2\left (X_{s,T}^{X_{t,s}^{x,2},2}\right )+\int_s^T(F_2(u_2))(r,X_{s,r}^{X_{t,s}^{x,2},2})dr\right |^q\right] \right)^{\frac{q}{q}}\\
			&\le \left(\left(\E\left[\left | g_2(X_{t,T}^{x,2})\right |^q\right]\right)^{\frac{1}{q}}+ (T-s)^{\frac{q-1}{q}}\left(\int_s^T\E\left[\left| (F_2(u_2))(r,X_{t,r}^{x,2})  \right |^q\right]dr\right)^{\frac{1}{q}} \right)^q\\
			&\le \left((T-s)^{\frac{q-1}{q}} c\left(\int_s^T \E\left[\left|u_2(r,X_{t,r}^{x,2})\right|^q\right]dr\right)^{\frac{1}{q}} +2b e^{\frac{\beta}{2p}((q-1)c^4+3c^3)(T-t)}(\varphi(x))^{\frac{\beta}{p}} \right)^q\\
			&\le 2^{q-1}(T-s)^{q-1}c^q\int_s^T\E\left[\left|u_2(r,X_{t,r}^{x,2})\right|^q\right]dr + 2^qb^q e^{\frac{\beta q}{2p}((q-1)c^4+3c^3)(T-t)}(\varphi(x))^{\frac{\beta q}{p}}.
		\end{aligned}
	\end{equation}
	This, (\ref{u_moment_proof1}), and Gronwall's inequality  yield
	\begin{equation}
		\begin{aligned}
			\E\left[\left|u_2(s,X_{t,s}^{x,2})\right|^q\right]&\le 2^{q}b^qe^{\frac{\beta q}{2p}((q-1)c^4+3c^3)(T-t)+2^{q-1}c^q(T-s)^q}\left(\varphi(x)\right)^{\frac{\beta q}{p}}.
		\end{aligned}
	\end{equation}
	This finishes the proof.
\end{proof}
\begin{lemma}\label{newfullperturbation}
	Assume Setting \ref{setting2new}. Then it holds for all $t\in [0,T], x\in \R^d $
	\begin{equation}
		\sup_{s\in[t,T]}\E\left[\left|u_2(s,X_{t,s}^{x,2})-u_1(s,X_{t,s}^{x,1})\right|\right]\le \delta 2^{q+2}b^q(T+1)e^{q(2qc^4+3c^3)(T-t)+2^qc^qT^q+(c+1)^2}(\varphi(x))^{q+\frac{1}{2}}.
	\end{equation}
\end{lemma}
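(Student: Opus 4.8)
The plan is to reduce the claim to a single backward Gronwall inequality. The three inputs I would use are: (a) a pathwise stability estimate for the SDEs in~(\ref{setting2new_X}) in terms of $\delta$; (b) the representation~(\ref{setting2new_u}) combined with the flow property~(\ref{setting2new_X_pathwiseUnique}); and (c) the moment bounds from Lemma~\ref{LyapunovLemmaNew}(ii) and Lemma~\ref{u_moment_new}. The key observation is that $u_2$ is evaluated along the flow $X^{\cdot,2}$ and $u_1$ along $X^{\cdot,1}$, so that comparing the two at an intermediate time requires both the flow property and the fixed-point representation before a Gronwall argument can close the estimate.

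First I would establish, for all $t\in[0,T]$, $x\in\R^d$, a stability bound of the form
\[
\sup_{s\in[t,T]}\left(\E\left[\nrm{X_{t,s}^{x,1}-X_{t,s}^{x,2}}^2\right]\right)^{1/2}\le C_1\,\delta\,(\varphi(x))^{q}
\]
for a constant $C_1\in(0,\infty)$ depending only on $c,q,T$. To do so, subtract the two equations in~(\ref{setting2new_X}), insert the auxiliary terms $\pm\mu_1(X_{t,r}^{x,2})$ and $\pm\sigma_1(X_{t,r}^{x,2})$, and estimate with the Lipschitz continuity~(\ref{setting2new_Lipschitz_mu_sigma}) of $\mu_1,\sigma_1$, the approximation bound~(\ref{setting2new_g_f_mu_sigma_approx}) evaluated at $v=0$ (which gives $\nrm{\mu_1(y)-\mu_2(y)}+\nrm{\sigma_1(y)-\sigma_2(y)}_F\le 2\delta(\varphi(y))^{q}$), the Cauchy--Schwarz inequality for the drift integral, the It\^o isometry (or Burkholder--Davis--Gundy) for the stochastic integral, and Lemma~\ref{LyapunovLemmaNew}(ii) with $\kappa\curvearrowleft 2q$ to control $\sup_{r\in[t,T]}\E[(\varphi(X_{t,r}^{x,2}))^{2q}]$; Gronwall's inequality in $s$ then closes the estimate. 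This is the step that generates the factor $e^{q(2qc^4+3c^3)(T-t)+(c+1)^2}$.

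Next, arguing as in the proof of Lemma~\ref{u_moment_new} (combining~(\ref{setting2new_u}) with~(\ref{setting2new_X_pathwiseUnique}) and the independence of Brownian increments), I would obtain, for each $i\in\{1,2\}$ and $s\in[t,T]$, $\mathbb{P}$-a.s.
\[
u_i(s,X_{t,s}^{x,i})=\E\left[g_i(X_{t,T}^{x,i})\mid\F_s\right]+\int_s^T\E\left[(F_i(u_i))(r,X_{t,r}^{x,i})\mid\F_s\right]dr.
\]
Subtracting the identities for $i=1$ and $i=2$, taking absolute values, taking expectations, and using the tower property and Fubini's theorem yields
\[
\E\left[\left|u_2(s,X_{t,s}^{x,2})-u_1(s,X_{t,s}^{x,1})\right|\right]\le \E\left[\left|g_2(X_{t,T}^{x,2})-g_1(X_{t,T}^{x,1})\right|\right]+\int_s^T \E\left[\left|(F_2(u_2))(r,X_{t,r}^{x,2})-(F_1(u_1))(r,X_{t,r}^{x,1})\right|\right]dr.
\]
Then I would split each difference into an \emph{approximation} part (identical arguments, but $g_1$ vs.\ $g_2$, resp.\ $f_1$ vs.\ $f_2$), estimated via~(\ref{setting2new_g_f_mu_sigma_approx}), and a \emph{Lipschitz} part (identical functions $g_1$, resp.\ $f_1$, but different arguments), estimated via~(\ref{setting2new_localLipschitz_g_f}) (for $g_1$ setting the two $f$-slots equal to kill the $cT|v-w|$ term). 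After taking expectations, the approximation parts are bounded by $\delta$ times $\E[(\varphi(X_{t,r}^{x,i}))^{q}]$ and $\E[|u_2(r,X_{t,r}^{x,2})|^{q}]$, which are handled by Lemma~\ref{LyapunovLemmaNew}(ii) (with $\kappa\curvearrowleft q$) and Lemma~\ref{u_moment_new}, respectively; the Lipschitz part for $f_1$ produces the crucial term $c\,\E[|u_2(r,X_{t,r}^{x,2})-u_1(r,X_{t,r}^{x,1})|]$ together with terms of the form $b T^{-1/2}\,\E[(\varphi(X_{t,r}^{x,2})+\varphi(X_{t,r}^{x,1}))^{\beta/p}\,\nrm{X_{t,r}^{x,2}-X_{t,r}^{x,1}}]$, which I bound by the Cauchy--Schwarz inequality, using $2\beta/p\le1$ and $\varphi\ge1$ with Jensen's inequality to reduce the $\varphi$-factor to a first moment (again Lemma~\ref{LyapunovLemmaNew}(ii), contributing the extra power $(\varphi(x))^{1/2}$), and the stability bound from the first step for the factor $(\E[\nrm{X_{t,r}^{x,2}-X_{t,r}^{x,1}}^2])^{1/2}$.

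Collecting these estimates gives, for all $s\in[t,T]$,
\[
\E\left[\left|u_2(s,X_{t,s}^{x,2})-u_1(s,X_{t,s}^{x,1})\right|\right]\le A+c\int_s^T\E\left[\left|u_2(r,X_{t,r}^{x,2})-u_1(r,X_{t,r}^{x,1})\right|\right]dr,
\]
where $A$ is bounded by a constant times $\delta\,(\varphi(x))^{q+1/2}$ (the exponent $q$ coming from the approximation/stability terms and the extra $\tfrac12$ from the Cauchy--Schwarz $\varphi$-factor, since $\beta/p\le\tfrac12$). Since $\sup_{s\in[t,T]}\E[|u_2(s,X_{t,s}^{x,2})-u_1(s,X_{t,s}^{x,1})|]<\infty$ by~(\ref{setting2new_u_intble}) and Lemma~\ref{u_moment_new}, Gronwall's inequality in the backward variable $s$ gives the bound $A\,e^{c(T-t)}$ for all $s\in[t,T]$, and tracking the constants can be arranged to fit under the stated $\delta\,2^{q+2}b^q(T+1)\,e^{q(2qc^4+3c^3)(T-t)+2^qc^qT^q+(c+1)^2}(\varphi(x))^{q+1/2}$. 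The hard part is the first step, proving the SDE stability estimate with the correct polynomial-in-$\varphi$ growth and exponential-in-$T$ constants, since the remaining steps are routine triangle-inequality splittings plus Gronwall; a secondary source of friction is bookkeeping the many constants carefully enough that the sum of all error contributions fits under the claimed bound.
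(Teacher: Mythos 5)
Your proposal follows essentially the same route as the paper: an SDE stability estimate of order $\delta(\varphi(x))^q$, the fixed-point representation~(\ref{setting2new_u}) combined with the flow property~(\ref{setting2new_X_pathwiseUnique}), a splitting into approximation parts (via~(\ref{setting2new_g_f_mu_sigma_approx})) and Lipschitz parts (via~(\ref{setting2new_localLipschitz_g_f})) controlled by Lemma~\ref{LyapunovLemmaNew}~(ii), Lemma~\ref{u_moment_new}, and Cauchy--Schwarz, and a final Gronwall argument. The only real difference is that the paper obtains the stability estimate by citing [Hutzenthaler\_2020\_perturbation, Theorem~1.2] rather than redoing the It\^o-isometry/Gronwall computation by hand as you propose, which is a matter of packaging rather than of substance.
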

\begin{proof}
	First, [\citen{Hutzenthaler_2020_perturbation}, Theorem  1.2] (applied for every $ t\in[0,T], s\in [t,T], x\in\R^d $ with $ H\curvearrowleft \R^d, U\curvearrowleft\R^d, D\curvearrowleft\R^d, T\curvearrowleft s-t, (\F_r)_{r\in[0,T]}\curvearrowleft (\F_{r+t})_{r\in[0,s-t]}, (W_r)_{r\in[0,T]}\curvearrowleft(W_{t+r}-W_t)_{r\in[0,s-t]}, (X_r)_{r\in[0,T]}\curvearrowleft (X_{t,t+r}^{x,1})_{r\in[0,s-t]}, (Y_r)_{r\in[0,T]}\curvearrowleft (X_{t,t+r}^{x,2})_{r\in[0,s-t]}, \mu\curvearrowleft\mu_1, \sigma\curvearrowleft\sigma_1,$  $ (a_r)_{r\in[0,T]}\curvearrowleft (\mu_2(X_{t,t+r}^{x,2}))_{r\in[0,s-t]},$  $ (b_r)_{r\in[0,T]}\curvearrowleft (\sigma_2(X_{t,t+r}^{x,2}))_{r\in[0,s-t]}, $  $ \tau\curvearrowleft(\Omega\ni\omega\rightarrow s-t\in[0,s-t]),  p\curvearrowleft2,q\curvearrowleft\infty, r\curvearrowleft 2, \alpha \curvearrowleft1, \beta\curvearrowleft1, \varepsilon\curvearrowleft1 $ in the notation of [\citen{Hutzenthaler_2020_perturbation}, Theorem 1.2]), combined with the Cauchy-Schwartz inequality, (\ref{setting2new_Lipschitz_mu_sigma}), and (\ref{setting2new_g_f_mu_sigma_approx}) show that for all $ t\in[0,T], s\in[t,T], x\in\R^d $ we have 
	\begin{equation}
		\begin{aligned}
			&\left(\E\left[\nrm{X_{t,s}^{x,1}-X_{t,s}^{x,2}}^2\right]\right)^{\frac{1}{2}}\\
			&\le \underset{z \ne y}{\sup_{z,y\in\R^d,}}\exp\left(\left[\frac{\langle z-y,\mu_1(z)-\mu_1(y)\rangle+\nrm{\sigma_1(z)-\sigma_1(y)}_F^2}{\nrm{z-y}^2}+\frac{1}{2}\right]^+\right)\\
			&\cdot\left(\left(\int_t^s\E\left[\nrm{\mu_2(X_{t,r}^{x,2})-\mu_1(X_{t,r}^{x,2})}^2\right]dr\right)^{\frac{1}{2}}+\sqrt{2}\left(\int_t^s\E\left[\nrm{\sigma_2(X_{t,r}^{x,2})-\sigma_1(X_{t,r}^{x,2})}^2\right]dr\right)^{\frac{1}{2}}\right)\\
			&\le e^{c+c^2+\frac{1}{2}}\delta(1+\sqrt{2})\left(\int_t^s \E\left[\varphi(X_{t,r}^{x,2})^{2q}\right]dr\right)^{\frac{1}{2}}\\
			&\le e^{c+c^2+\frac{1}{2}}\delta (1+\sqrt{2}) \sqrt{s-t}\left(\sup_{r\in[t,s]}\E\left[\varphi(X_{t,r}^{x,2})^{2q}\right]\right)^\frac{1}{2}.
		\end{aligned}
	\end{equation}
	This combined with Lemma \ref{LyapunovLemmaNew} $ (ii) $ (applied with $ \kappa \curvearrowleft 2q $ in the notation of \ref{LyapunovLemmaNew}) hence demonstrates for all $ x\in \R^d, t\in [0,T],s\in[t,T] $ that
	\begin{equation}
		\begin{aligned}
			\left(\E\left[\nrm{X_{t,s}^{x,1}-X_{t,s}^{x,2}}^2\right]\right)^{\frac{1}{2}}&\le \delta 2\sqrt{2} e^{c^2+c+\frac{1}{2}}\sqrt{s-t} e^{q((2q-1)c^4+3c^3)(s-t)}\varphi(x)^q.
		\end{aligned}
	\end{equation}
	This and Hölder's inequality ensure for all $ x\in \R^d, t\in [0,T],s\in[t,T] $ that
	\begin{equation}\label{fullperturbation_proof_1}
		\begin{aligned}
			&\E\left[(\varphi(X_{t,s}^{x,1})+\varphi(X_{t,s}^{x,2}))^{\frac{\beta}{p}}\nrm{X_{t,s}^{x,1}-X_{t,s}^{x,2}}\right]\\
			&\le \E\left[(\varphi(X_{t,s}^{x,1})+\varphi(X_{t,s}^{x,2}))^{\frac{1}{2}}\nrm{X_{t,s}^{x,1}-X_{t,s}^{x,2}}\right]\\
			&\le \left(\E\left[\varphi(X_{t,s}^{x,1})+\varphi(X_{t,s}^{x,2})\right]\right)^{\frac{1}{2}}\left(\E\left[\nrm{X_{t,s}^{x,1}-X_{t,s}^{x,2}}^2\right]\right)^{\frac{1}{2}}\\
			&\le \sqrt{2}e^{\frac{3}{4}c^3(s-t)}(\varphi(x))^{\frac{1}{2}}\left(\E\left[\nrm{X_{t,s}^{x,1}-X_{t,s}^{x,2}}^2\right]\right)^{\frac{1}{2}}\\
			&\le \delta 4 \sqrt{s-t}e^{(q((2q-1)c^4+3c^3))(s-t)+c^2+c+\frac{1}{2}}(\varphi(x))^{q+\frac{1}{2}}.
		\end{aligned}
	\end{equation}
	Next,  the triangle inequality and (\ref{setting2new_X_pathwiseUnique}) guarantee for all $ t\in[0,T], s\in[t,T], x\in\R^d $ that
	\begin{equation}
		\begin{aligned}
			&\E\left[\left |u_2(s,X_{t,s}^{x,2})-u_1(s,X_{t,s}^{x,1})\right |\right]\\
			&\le \E\left[\left|g_2\left (X_{s,T}^{X_{t,s}^{x,2},2}\right )-g_1\left (X_{s,T}^{X_{t,s}^{x,1},1}\right )\right|\right]+\int_s^T\E\left[\left|\left(F_2(u_2)\right)\left(r,X_{s,r}^{X_{t,s}^{x,2},2}\right)-\left(F_1(u_1)\right)\left(r,X_{s,r}^{X_{t,s}^{x,1},1}\right)\right|\right]dr\\
			&=\E\left[\left|g_2(X_{t,T}^{x,2})-g_1(X_{t,T}^{x,1})\right|\right]+\int_s^T\E\left[\left|(F_2(u_2))\left(r,X_{t,r}^{x,2}\right)-(F_1(u_1))\left(r,X_{t,r}^{x,1}\right)\right|\right]dr\\
			&\le \E\left[\left|g_2(X_{t,T}^{x,2})-g_1(X_{t,T}^{x,2})\right|\right]+\int_s^T\E\left[\left|(F_2(u_2))\left(r,X_{t,r}^{x,2}\right)-(F_1(u_2))\left(r,X_{t,r}^{x,2}\right)\right|\right]dr    \\
			&\quad+\E\left[\left|g_1(X_{t,T}^{x,2})-g_1(X_{t,T}^{x,1})\right|\right]+\int_s^T\E\left[\left|(F_1(u_2))\left(r,X_{t,r}^{x,2}\right)-(F_1(u_1))\left(r,X_{t,r}^{x,1}\right)\right|\right]dr.
		\end{aligned}
	\end{equation}
	This, (\ref{setting2new_localLipschitz_g_f}), and (\ref{setting2new_g_f_mu_sigma_approx}) ensure for all $ t\in[0,T],s\in[t,T], x\in \R^d $ that
	\begin{equation}
		\begin{aligned}
			&\E\left[\left |u_2(s,X_{t,s}^{x,2})-u_1(s,X_{t,s}^{x,1})\right |\right]\\
			&\le T^{-\frac{1}{2}}b\E\left[\left(\varphi(X_{t,T}^{x,1})+\varphi(X_{t,T}^{x,2})\right)^{\frac{\beta}{p}}\nrm{X_{t,T}^{x,1}-X_{t,T}^{x,2}}\right]+\delta \E\left[(\varphi(X_{t,T}^{x,2}))^q\right]\\
			&+ \delta\int_s^T\E\left[\varphi(X_{t,r}^{x,2})^q\right]+\left|u_2(r,X_{t,r}^{x,2})\right|^qdr +c\int_s^T\E\left[\left|u_2(r,X_{t,r}^{x,2})-u_1(r,X_{t,r}^{x,1})\right|\right]dr\\ &+bT^{-\frac{1}{2}}\left(\sup_{r\in[s,T]}\E\left[\left(\varphi(X_{t,r}^{x,1})+\varphi(X_{t,r}^{x,2})\right)^{\frac{\beta}{p}}\nrm{X_{t,r}^{x,1}-X_{t,r}^{x,2}}\right]\right)\\
			&\le 2bT^{-\frac{1}{2}}\left(\sup_{r\in[s,T]}\E\left[\left(\varphi(X_{t,T}^{x,1})+\varphi(X_{t,T}^{x,2})\right)^{\frac{\beta}{p}}\nrm{X_{t,T}^{x,1}-X_{t,T}^{x,2}}\right]\right)\\
			&+\delta(T-s+1)\left( \sup_{r\in[s,T]}\E\left[(\varphi(X_{t,r}^{x,2}))^q\right]\right)+\delta(T-s) \left(\sup_{r\in[s,T]}\E\left[\left|u_2(r,X_{t,r}^{x,2})\right|^q\right]\right)\\
			&+c\int_s^T\E\left[\left|u_2(r,X_{t,r}^{x,2})-u_1(r,X_{t,r}^{x,1})\right|\right]dr.
		\end{aligned}
	\end{equation}
	This, Lemma \ref{LyapunovLemmaNew} $ (ii) $ (applied with $ \kappa\curvearrowleft q $ in the notation of Lemma \ref{LyapunovLemmaNew}), (\ref{fullperturbation_proof_1}), and Lemma \ref{u_moment_new} hence ensure for all $ x\in \R^d, t\in [0,T], s\in[t,T] $ that
	\begin{equation}
		\begin{aligned}
			\E\left[ \left| u_2(s,X_{t,s}^{x,2})-u_1(s,X_{t,s}^{x,1}) \right| \right]&\le \delta 8 e^{(q((2q-1)c^4+3c^3))(T-t)+c^2+c+\frac{1}{2}}(\varphi(x))^{q+\frac{1}{2}}\\
			&\quad +\delta(T-s+1) e^{\frac{q}{2}((q-1)c^4+3c^3)(T-t)}(\varphi(x))^q\\
			&\quad + \delta(T-s) 2^qb^qe^{\frac{q}{4}((q-1)c^4+3c^3)(T-t)+2^{q-1}c^q(T-t)^q}(\varphi(x))^{\frac{q}{2}}\\
			&\quad+ c\int_s^T\E\left[\left|u_2(r,X_{t,r}^{x,2})-u_1(r,X_{t,r}^{x,1})\right|\right]dr\\
			&\le\delta 2^{q+2}b^q(T+1)e^{(q((2q-1)c^4+3c^3))(T-t)+2^qc^qT^q+(c+1)^2}(\varphi(x))^{q+\frac{1}{2}}\\
			&\quad +c\int_s^T\E\left[\left|u_2(r,X_{t,r}^{x,2})-u_1(r,X_{t,r}^{x,1})\right|\right]dr\\
		\end{aligned}
	\end{equation}
	This and Gronwall's inequality ensure for all $ x\in\R^d,t\in[0,T], s\in[t,T] $ that
	\begin{equation}
		\E\left[ \left| u_2(s,X_{t,s}^{x,2})-u_1(s,X_{t,s}^{x,1}) \right| \right]\le \delta 2^{q+2}b^q(T+1)e^{q(2qc^4+3c^3)(T-t)+2^qc^qT^q+(c+1)^2}(\varphi(x))^{q+\frac{1}{2}}.
	\end{equation}
	The proof is thus finished.
\end{proof}
\subsection{A full error estimate}
The following corollary combines Lemma \ref{newfullperturbation} with [\citen{hutzenthaler2020multilevel}, Theorem 4.2] and yields an error estimate for MLP-approximations, where the PDE coefficients are perturbed.
\begin{corollary}\label{fullerror}
		Assume Setting \ref{setting2new}, let $ \Theta=\cup_{n\in\N}\mathbb{Z}^n$, let $ \frakt^\theta\colon\Omega\rightarrow[0,1] ,\theta\in\Theta,$ be i.i.d. random variables satisfying for all $ t\in (0,1) $  that $ \mathbb{P}(\frakt^0\le t)=t $, let $ \frakT^\theta\colon[0,T]\times\Omega\rightarrow[0,T] , \theta\in\Theta,$ satisfy for all $ t\in[0,T] $ that $ \frakT^\theta_t=t+(T-t)\frakt^\theta, $ let $ W^\theta\colon[0,T]\times\Omega\rightarrow\R^d,\theta\in\Theta,$ be independent standard $  (\F_t)_{t\in[0,T]} $-Brownian motions,  assume indepence of $ (W^\theta)_{\theta\in\Theta}$ and $ (\frakt^\theta)_{\theta \in \Theta} $, 
	for every $ N\in\N, \theta\in\Theta, x\in\R^d, t\in[0,T] $ let $ Y^{N,\theta,x}_t\colon[t,T]\times \Omega\rightarrow\R^d $ satisfy for all $ n\in \{0,1,...,N\} $, $ s\in \left[\frac{nT}{N},\frac{(n+1)T}{N}\right]\cap[t,T] $ that $ Y_{t,t}^{N,\theta,x}=x $ and 
	\begin{equation}\label{2SettingEulerMaruyama}
		Y_{t,s}^{N,\theta,x}-Y_{t,\max\{t,\frac{nT}{N}\}}^{N,\theta,x}=\mu_2(Y_{t,\max\{t,\frac{nT}{N}\}}^{N,\theta,x})(s-\max\{t,\frac{nT}{N}\})+\sigma_2(Y_{t,\max\{t,\frac{nT}{N}\}}^{N,\theta,x})(W_s^\theta-W_{\max\{t,\frac{nT}{N}\}}^\theta),
	\end{equation}
	let $ U_{n,M}^\theta\colon[0,T]\times \R^d\times \Omega\rightarrow\R, n,M\in\mathbb{Z},\theta\in\Theta, $ satisfy for all $ \theta\in\Theta, n\in\N_0, M\in\N, t\in[0,T], x\in\R^d $ that 
	\begin{equation}\label{2SettingMLP}
		\begin{aligned}
			&	U_{n,M}^\theta(t,x)=\frac{\mathbbm{1}_\N(n)}{M^n}\sum_{i=1}^{M^n}g_2\left(Y_{t,T}^{M^M,(\theta,0,-i),x}\right)\\
			&+\sum_{l=0}^{n-1}\frac{T-t}{M^{n-l}}\left[\sum_{i=1}^{M^{n-l}}\left(F_2\left(U_{l,M}^{(\theta,l,i)}\right)-\mathbbm{1}_\N(l)F_2\left(U_{l-1,M}^{(\theta,-l,i)}\right)\right)\left(\frakT_t^{(\theta,l,i)},Y_{t,\frakT_t^{(\theta,l,i)}}^{M^M,(\theta,l,i),x}\right)\right].
		\end{aligned}
	\end{equation}
	Then it holds for all $ n\in\N_0, M\in\N, t\in[0,T],x\in\R^d $ that
	\begin{equation}
		\begin{aligned}
			\left (\E\left[\left|U_{n,M}^{0}(t,x)-u_1(t,x)\right|^2\right]\right )^{\frac{1}{2}}&\le 4^qb^qc^2(T+1)e^{q(qc^4+3c^3)T+2^qc^qT^q+(c+1)^2}(\varphi(x))^q\\
			&\quad \cdot\left[ \delta + \frac{\exp(2ncT+\frac{M}{2})}{M^{\frac{n}{2}}}+\frac{1}{M^{\frac{M}{2}}}\right].
		\end{aligned}
	\end{equation}
\end{corollary}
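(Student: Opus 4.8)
The plan is to estimate, for all $n\in\N_0$, $M\in\N$, $t\in[0,T]$, $x\in\R^d$, the left-hand side via the triangle inequality
\begin{equation*}
\left(\E\left[\left|U_{n,M}^{0}(t,x)-u_1(t,x)\right|^2\right]\right)^{1/2}\le\left(\E\left[\left|U_{n,M}^{0}(t,x)-u_2(t,x)\right|^2\right]\right)^{1/2}+\left|u_2(t,x)-u_1(t,x)\right| ,
\end{equation*}
and then to bound the \emph{intrinsic} MLP error in the first summand by the external MLP convergence result [\citen{hutzenthaler2020multilevel}, Theorem 4.2] and the coefficient-perturbation error in the second summand by Lemma~\ref{newfullperturbation}.

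For the first summand I would apply [\citen{hutzenthaler2020multilevel}, Theorem 4.2] (with explicit substitutions) to the semilinear PDE with coefficients $\mu_2,\sigma_2,g_2$ and nonlinearity $f_2$, with the Euler--Maruyama discretisation on $N=M^M$ equidistant steps as in (\ref{2SettingEulerMaruyama}), the MLP approximations $U_{n,M}^{\theta}$ as in (\ref{2SettingMLP}), and the Lyapunov weight $\varphi$. The bulk of the work here is to verify the hypotheses of that theorem in the present notation: the Gronwall--Lyapunov inequality for $\varphi$ is exactly what Lemma~\ref{LyapunovLemmaNew}(i) yields from (\ref{setting2new_Lyapunov}); the linear growth and global Lipschitz continuity of $\mu_2,\sigma_2$ are (\ref{setting2new_Lyapunov}) and (\ref{setting2new_Lipschitz_mu_sigma}); the growth and local Lipschitz bounds on $f_2$ and $g_2$ are (\ref{setting2new_growth_g_f})--(\ref{setting2new_localLipschitz_g_f}); the identification of $u_2$ as the associated stochastic fixed point is (\ref{setting2new_u}) together with the integrability (\ref{setting2new_u_intble}); and the a~priori moment bounds on $\varphi(X^{x,2})$, on $\varphi(Y^{M^M,\theta,x})$, and on $u_2$ evaluated along $X^{x,2}$ that the cited theorem consumes follow from Lemma~\ref{LyapunovLemmaNew}(ii) and Lemma~\ref{u_moment_new}. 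Its conclusion is then a bound of the form $C\,(\varphi(x))^{\gamma}\big[M^{-n/2}\exp(2ncT+\tfrac{M}{2})+M^{-M/2}\big]$, in which the $M^{-n/2}$-term is the statistical multilevel error, the $M^{-M/2}$-term is the order-$\tfrac12$ time-discretisation error for $N=M^M$, and $C,\gamma$ are explicit in $b,c,T,q$.

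For the second summand, note that (\ref{setting2new_X}) gives $X_{t,t}^{x,1}=X_{t,t}^{x,2}=x$, whence
\begin{equation*}
\left|u_2(t,x)-u_1(t,x)\right|=\E\left[\left|u_2(t,X_{t,t}^{x,2})-u_1(t,X_{t,t}^{x,1})\right|\right]\le\sup_{s\in[t,T]}\E\left[\left|u_2(s,X_{t,s}^{x,2})-u_1(s,X_{t,s}^{x,1})\right|\right] ,
\end{equation*}
and Lemma~\ref{newfullperturbation} bounds the right-hand side by $\delta\,2^{q+2}b^q(T+1)\exp\!\big(q(2qc^4+3c^3)(T-t)+2^qc^qT^q+(c+1)^2\big)(\varphi(x))^{q+1/2}$, i.e.\ by a term of the form $C\,\delta\,(\varphi(x))^{\gamma}$ after enlarging $C,\gamma$ if necessary.

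Finally I would add the two bounds and consolidate the constants and the power of $\varphi$, using $q\ge2$, $b,c\ge1$, $\varphi\ge1$, $T-t\le T$, and coarse monotonicity estimates for the exponents, so as to pull out the single prefactor $4^qb^qc^2(T+1)\exp\!\big(q(qc^4+3c^3)T+2^qc^qT^q+(c+1)^2\big)(\varphi(x))^q$ in front of the bracket $\big[\delta+M^{-n/2}\exp(2ncT+\tfrac{M}{2})+M^{-M/2}\big]$, which is the claimed inequality. The principal obstacle I anticipate is the faithful translation of [\citen{hutzenthaler2020multilevel}, Theorem 4.2] into Setting~\ref{setting2new} — pinning down that theorem's precise $\varphi$-exponent and constant and checking that the moment estimates of Lemma~\ref{u_moment_new} and Lemma~\ref{LyapunovLemmaNew}(ii) are exactly in the form it requires — whereas the subsequent merging of the two constants, though lengthy, is routine.
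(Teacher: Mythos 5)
Your proposal follows essentially the same route as the paper: the paper's proof also splits via the triangle inequality into the MLP error, bounded by a direct citation of [\citen{hutzenthaler2020multilevel}, Theorem 4.2(iv)] with $f\curvearrowleft f_2$, $g\curvearrowleft g_2$, $\mu\curvearrowleft\mu_2$, $\sigma\curvearrowleft\sigma_2$ (yielding the factor $12bc^2|\varphi(x)|^{\frac{\beta+1}{p}}e^{9c^3T}$ times the bracket), and the perturbation error $|u_2(t,x)-u_1(t,x)|$, bounded by Lemma~\ref{newfullperturbation} exactly as you indicate, after which the constants are merged into the stated prefactor. Your additional attention to verifying the hypotheses of the cited theorem is sound but is handled in the paper simply by the application with the indicated substitutions.
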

\begin{proof}
	First, [\citen{hutzenthaler2020multilevel}, Theorem 4.2(iv)] (applied with $ f\curvearrowleft f_2, g\curvearrowleft g_2, \mu \curvearrowleft \mu_2, \sigma \curvearrowleft \sigma_2 $ in the notation of [\citen{hutzenthaler2020multilevel}, Theorem 4.2(iv)]) ensures for all $ t\in [0,T], x\in \R^d, n\in \N_0, M\in\N $ that 
	\begin{equation}
		\left( \E\left[\left|U_{n,M}^0(t,x)-u_2(t,x)\right|^2\right] \right)^{\frac{1}{2}}\le \left[\frac{\exp(2ncT+\frac{M}{2})}{M^{\frac{n}{2}}}+\frac{1}{M^{\frac{M}{2}}}\right]12bc^2|\varphi(x)|^{\frac{\beta+1}{p}}e^{9c^3T}.
	\end{equation}
	This, the triangle inequality, and Lemma \ref{newfullperturbation} ensure for all $ t\in[0,T],x\in\R^d,n\in\N_0,M\in\N $ that
	\begin{equation}
		\begin{aligned}
			\left(\E\left[\left|U_{n,M}^0(t,x)-u_1(t,x)\right|^2\right]\right)^{\frac{1}{2}}&\le \left( \E\left[\left|U_{n,M}^0(t,x)-u_2(t,x)\right|^2\right] \right)^{\frac{1}{2}}+|u_2(t,x)-u_1(t,x)|\\
			&\le \left[\frac{\exp(2ncT+\frac{M}{2})}{M^{\frac{n}{2}}}+\frac{1}{M^{\frac{M}{2}}}\right]12bc^2|\varphi(x)|^{\frac{\beta+1}{p}}e^{9c^3T}\\
			&+ \delta 2^{q+2}b^q(T+1)e^{q(2qc^4+3c^3)(T-t)+2^qc^qT^q+(c+1)^2}(\varphi(x))^{q+\frac{1}{2}}\\
			&\le 4^qb^qc^2(T+1)e^{q(2qc^4+3c^3)T+2^qc^qT^q+(c+1)^2}(\varphi(x))^{q+\frac{1}{2}}\\
			&\quad \cdot\left[ \delta + \frac{\exp(2ncT+\frac{M}{2})}{M^{\frac{n}{2}}}+\frac{1}{M^{\frac{M}{2}}}\right].
		\end{aligned}
	\end{equation}
	This finishes the proof.
\end{proof}
	\section{Deep Neural Networks}\label{sect3}
	In this section we will prove that multilevel Picard approximations can be represented by deep neural networks (DNNs), allowing us to construct a DNN we can apply the results established in Section \ref{sect2} to. This generalizes [\citen{Hutzenthaler_2020}, Lemma 3.10] in the sense that we additionally establish a DNN representation of Euler-Maruyama approximations.
	We will first briefly introduce artificial neural networks from a mathematical point of view.
	There are many types of artificial neural network architectures, in this article however, we only consider \textit{feed-forward} neural network architectures. 
	Mathematically, a feed-forward neural network of depth $ \mathcal{L}+1 \in\N$ represents a composition of functions 
	\begin{equation}\label{INTRODNN_COMP}
		T_\mathcal{L} \circ \mathbf{A}\circ T_{\mathcal{L}-1}\circ \mathbf{A} \circ T_{\mathcal{L}-2} \circ \mathbf{A} \circ \dots \circ T_2\circ \mathbf{A}\circ T_1,
	\end{equation}
	where $ T_1,..., T_\mathcal{L} $ are affine-linear mappings and $ \mathbf{A} $ refers to a nonlinear mapping that is applied componentwise, which is called \textit{activation function}. 
	Note that the last affine-linear mapping $ T_{\mathcal{L}} $ is not followed by an activation function. 
	For every $ k\in\{1,...,\mathcal{L}-1\} $ the composition $ (\mathbf{A}\circ T_k)$ is called \textit{hidden layer}, while $ T_{\mathcal{L}} $ is called \textit{output layer}. Similarly, we call the input of the function in (\ref{INTRODNN_COMP}) as \textit{input layer.}
	We will only consider compositions, that are endowed with the \textit{rectified linear unit} (ReLU) activation function, i.e., for all $ d\in\N $ we have $ \mathbf{A}_d\colon\R^d\rightarrow\R^d $ satisfying for all $ x=(x_1,...,x_d)\in\R^d $ that $ \mathbf{A}_d(x)=(\max\{0,x_1\},...,\max\{0,x_d\}). $ 
	In this case, the function in (\ref{INTRODNN_COMP}) is characterized by the affine-linear mappings involved, which in turn are of the form $ 	T_k(x)=W_kx+B_k $, where $ \left( W_k,B_k \right)\in\left(\R^{l_k\times l_{k-1}}\times\R^{l_k}\right),x\in\R^{l_{k-1}}, $  $ k\in\{1,...,\mathcal{L}\}, l_0,...,l_\mathcal{L}\in\N, $  and thus the function in (\ref{INTRODNN_COMP}) is characterized by $ \left( W_1,B_1 \right),...,$  $\left( W_\mathcal{L},B_\mathcal{L}\right) $.
	We refer to the entries of the matrices $ W_k, k\in\{1,...,\mathcal{L}\}, $  as \textit{weights}, to the entries of the vectors $ B_k, k\in\{1,...,\mathcal{L}\}, $ as \textit{biases}, and to the collection $ \left(\left( W_1,B_1 \right),...,\left( W_\mathcal{L},B_\mathcal{L} \right)\right ) $ as \textit{neural network}.
	If $ \mathcal{L} \in \N\cap[2,\infty), $ we call the network $ \textit{deep}. $
	The set of all deep neural networks in our consideration can thus be defined as 
	\begin{equation}
		\mathbf{N}=\bigcup_{H\in\N}\bigcup_{(k_0,...,k_{H+1})\in\N^{H+2}}\left[\prod_{n=1}^{H+1}\left(\R^{k_n\times k_{n-1}}\times\R^{k_n}\right)\right].
	\end{equation}
	For a given $ \Phi \in \mathbf{N}, $ we refer to its corresponding function with $ \calR(\Phi). $\\
	
	\noindent In practice DNNs are employed in a host of machine learning related applications.
 In supervised learning the goal is to choose a $ \Phi\in\mathbf{N} $ in such a way, that for a given set of datapoints $ (x_p,y_p)_{p\in[1,P]\cap\N}:[1,P]\cap\N\rightarrow\R^d\times\R^m, $ $ d,m,P\in\N, $ we have for all $ p\in[1,P]\cap\N $ that $ (\calR(\Phi))(x_p)\approx y_p $. 
	Typically, one fixes the shape for a network and then aims to minimize a loss function, usually the sum of squared errors, with respect to the choice of weights and biases. Due to the large number of parameters involved in even small networks, this minimization problem is of high dimension and is additionally highly non-convex. Therefore, classical minimization techniques such as gradient descent or Newton's method are not suitable. To overcome this issue, stochastic gradient descent methods are employed. These seem to work well, although there exist relatively few theoretical results which prove convergence of stochastic gradient descent methods, see, e.g., \cite{fehrman2019convergence}. For an overview on deep learning-based algorithms for PDE approximations, see \cite{beck2021overview}.
	\subsection{A mathematical framework for deep neural networks}
	\begin{setting}\label{Setting31}
		Let $ \nrm{\cdot}, \mnrm{\cdot}\colon(\cup_{d\in\N}\R^d)\rightarrow [0,\infty) $ and $ \dim\colon(\cup_{d\in\N}\R^d)\rightarrow \N $ satisfy for all $ d\in\N $, $ x=(x_1,...,x_d)\in\R^d $ that $ \nrm{x} =\sqrt{\sum_{i=1}^d (x_i)^2} $, $ \mnrm{x}= \max_{i\in [1,d]\cap\N}|x_i| $, and $ \dim(x)=d $, let $ \mathbf{A}_d: \R^d\rightarrow\R^d $, $ d\in \N $, satisfy for all $ d\in\N $, $ x=(x_1,...,x_d) \in\R^d$ that
		\begin{equation}
			\mathbf{A}_d(x)=(\max\{x_1,0\},...,\max\{x_d,0\}),
		\end{equation}
		let $ \mathbf{D}=\cup_{H\in\N}\N^{H+2} $, let 
		\begin{equation}
			\mathbf{N} = \bigcup_{H\in\N}\bigcup_{(k_0,...,k_{H+1})\in\N^{H+2}}\left[\prod_{n=1}^{H+1}(\R^{k_n\times k_{n-1}}\times\R^{k_n})\right],
		\end{equation}
		let $ \mathcal{D}\colon\mathbf{N}\rightarrow\mathbf{D},\calP\colon\mathbf{N}\rightarrow\N, $ and $ \mathcal{R}\colon \mathbf{N}\rightarrow (\cup_{k,l\in\N}C(\R^k,\R^l)) $ satisfy for all $ H\in\N, k_0, k_1,...,k_H,k_{H+1}\in\N, \Phi = ((W_1,B_1),...,(W_{H+1},B_{H+1}))\in \left[\prod_{n=1}^{H+1}(\R^{k_n\times k_{n-1}}\times\R^{k_n})\right], x_0\in\R^{k_0},...,x_H\in\R^{k_H}$ with $ \forall n\in\N\cap[1,H]\colon x_n = \mathbf{A}_{k_n}(W_nx_{n-1}+B_n) $ that 
		\begin{equation}
			\calP(\Phi)=\sum_{n=1}^{H+1}k_n(k_{n-1}+1), \qquad	\mathcal{D}(\Phi)=(k_0,k_1,...,k_H,k_{H+1}), 
		\end{equation}
		\begin{equation}
			\mathcal{R}(\Phi)\in C(\R^{k_0},\R^{k_{H+1}})\quad\text{and}\quad (\mathcal{R}(\Phi))(x_0)=W_{H+1}x_H+B_{H+1}.
		\end{equation}
	\end{setting}
	\begin{setting}\label{Setting3}	
		Assume Setting \ref{Setting31}, let $ \odot \colon\mathbf{D}\times \mathbf{D}\rightarrow\mathbf{D}$ satisfy for all $ H_1, H_2 \in \N,$  $ \alpha =(\alpha_0,\alpha_1,...,\alpha_{H_1},\alpha_{H_1+1})\in \N^{H_1+2},$  $ \beta=$  $(\beta_0,\beta_1,...,\beta_{H_2},$  $\beta_{H_2+1})$  $\in\N^{H_2+2} $ that
		\begin{equation}\label{defodot}
			\alpha \odot \beta = (\beta_0,\beta_1,...,\beta_{H_2},\beta_{H_2+1}+\alpha_0,\alpha_1,...,\alpha_{H_1+1})\in\N^{H_1+H_2+3},
		\end{equation}
		let $ \boxplus\colon \mathbf{D}\times\mathbf{D}\rightarrow\mathbf{D}  $ satisfy for all $ H\in\N, \alpha =(\alpha_0,\alpha_1,...,\alpha_{H},\alpha_{H+1})\in \N^{H+2}, \beta=(\beta_0,\beta_1,...,\beta_{H},\beta_{H+1})\in\N^{H+2} $ that 
		\begin{equation}
			\alpha \boxplus \beta = (\alpha_0, \alpha_1+\beta_1,...,\alpha_H+\beta_H,\beta_{H+1}),
		\end{equation}
		let $ \mathfrak{n}_n^d\in\mathbf{D}, n\in[3,\infty)\cap\N, d\in\N  $, satisfy for all $ n\in[3,\infty)\cap\N, d\in \N$ that 
		\begin{equation}
			\mathfrak{n}_n^d=(d,\underbrace{2d,.........,2d}_{(n-2)\text{-times}},d)\in\N^n,
		\end{equation}
		and let $ \mathfrak{n}_n \in \mathbf{D}, n\in [3,\infty)\cap\N, $ satisfy for all $ n\in [3,\infty)\cap\N $ that 
		\begin{equation}
			\mathfrak{n}_n=\mathfrak{n}_n^1.
		\end{equation}
	\end{setting}
	\subsection{Properties of the proposed deep neural networks}
	For the proof of our main result in this section, Lemma \ref{U=R}, we need several auxiliary lemmas, which are presented in this section. The proofs of Lemmas \ref{triangle}-\ref{sumsDNN} appear  in \cite{Hutzenthaler_2020} and are therefore omitted.
	
	\begin{lemma}\label{triangle}
		([\citen{Hutzenthaler_2020}, Lemma 3.5])
	  Assume Setting \ref{Setting3}, let $ H,k,l\in\N,  $ and let $ \alpha,\beta \in \{k\}\times\N^H\times\{l\}. $ Then it holds that $ \mnrm{\alpha\boxplus\beta}\le\mnrm{\alpha}+\mnrm{\beta}. $ 
	\end{lemma}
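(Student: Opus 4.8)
The plan is to unwind the two definitions involved and reduce the statement to an entrywise comparison of tuples of natural numbers. Since $\alpha,\beta\in\{k\}\times\N^H\times\{l\}$, I would first write $\alpha=(k,\alpha_1,\dots,\alpha_H,l)$ and $\beta=(k,\beta_1,\dots,\beta_H,l)$ with $\alpha_1,\dots,\alpha_H,\beta_1,\dots,\beta_H\in\N$, so that $\boxplus$ from Setting \ref{Setting3} is applicable to the pair (both tuples have length $H+2$) and yields
\begin{equation}
	\alpha\boxplus\beta=(k,\alpha_1+\beta_1,\dots,\alpha_H+\beta_H,l)\in\N^{H+2}.
\end{equation}

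Next I would invoke the definition of $\mnrm{\cdot}$ from Setting \ref{Setting31}, which gives $\mnrm{\alpha\boxplus\beta}=\max(\{k,l\}\cup\{\alpha_i+\beta_i\colon i\in\{1,\dots,H\}\})$, as well as $\mnrm{\alpha}\ge\max(\{k,l\}\cup\{\alpha_i\colon i\in\{1,\dots,H\}\})$ and the analogous lower bound for $\mnrm{\beta}$. Then I would compare entrywise: for every $i\in\{1,\dots,H\}$ one has $\alpha_i+\beta_i\le\mnrm{\alpha}+\mnrm{\beta}$, while $k\le\mnrm{\alpha}\le\mnrm{\alpha}+\mnrm{\beta}$ and $l\le\mnrm{\alpha}\le\mnrm{\alpha}+\mnrm{\beta}$ (using $\mnrm{\beta}\ge0$). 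Taking the maximum over all $H+2$ coordinates of $\alpha\boxplus\beta$ then yields $\mnrm{\alpha\boxplus\beta}\le\mnrm{\alpha}+\mnrm{\beta}$, which is the assertion.

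There is no genuine obstacle here; the only point deserving a moment of care is to check that the hypothesis $\alpha,\beta\in\{k\}\times\N^H\times\{l\}$ makes $\boxplus$ well defined on this pair (equal lengths, matching outer entries $\alpha_0=\beta_0=k$ and $\alpha_{H+1}=\beta_{H+1}=l$), and hence that the surviving last entry $\beta_{H+1}=l$ of $\alpha\boxplus\beta$ equals $\alpha_{H+1}$, so that every entry of $\alpha\boxplus\beta$ is indeed bounded by the coordinatewise sum of $\mnrm{\alpha}$ and $\mnrm{\beta}$. This triangle-type estimate is exactly what will later let one control the layer widths of architectures obtained by summing networks of equal depth.
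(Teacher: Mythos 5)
Your proof is correct. The paper itself omits the argument for this lemma and simply cites [Hutzenthaler et al., Lemma 3.5]; your direct unwinding of the definitions of $\boxplus$ and $\mnrm{\cdot}$ followed by the entrywise comparison is exactly the standard argument, so there is nothing to add (note only that $\boxplus$ is defined for any two tuples of equal length $H+2$, so the matching outer entries $k$ and $l$ are not needed for well-definedness, merely convenient for writing out $\alpha\boxplus\beta$; the bounds $\alpha_0\le\mnrm{\alpha}$ and $\beta_{H+1}\le\mnrm{\beta}$ would close the argument in any case).
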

	\begin{lemma}\label{composition}
		([\citen{Hutzenthaler_2020}, Lemma 3.8])
		Assume Setting \ref{Setting3} and let $ d_1,d_2,d_3\in\N, f\in C(\R^{d_2},\R^{d_3}), g\in C(\R^{d_1},\R^{d_2}), \alpha,\beta \in \mathbf{D} $ satisfy that $ f\in\mathcal{R}(\{\Phi\in\mathbf{N}:\mathcal{D}(\Phi)=\alpha\}) $ and $ g \in\mathcal{R}(\{\Phi\in\mathbf{N}:\mathcal{D}(\Phi)=\beta\}). $ Then it holds that $ (f\circ g) \in\mathcal{R}(\{\Phi\in\mathbf{N}:\mathcal{D}(\Phi)=\alpha \odot \beta\}) $. 
	\end{lemma}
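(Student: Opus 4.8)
The plan is to turn the abstract composition of architectures into an explicit network built from networks realising $f$ and $g$. By hypothesis there exist $\Phi_1=((W^{f}_1,B^{f}_1),\dots,(W^{f}_{H_1+1},B^{f}_{H_1+1}))\in\mathbf{N}$ with $\mathcal{D}(\Phi_1)=\alpha=(\alpha_0,\dots,\alpha_{H_1+1})$ and $\mathcal{R}(\Phi_1)=f$, and $\Phi_2=((W^{g}_1,B^{g}_1),\dots,(W^{g}_{H_2+1},B^{g}_{H_2+1}))\in\mathbf{N}$ with $\mathcal{D}(\Phi_2)=\beta=(\beta_0,\dots,\beta_{H_2+1})$ and $\mathcal{R}(\Phi_2)=g$. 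Since $\mathcal{R}(\Phi_1)\in C(\R^{\alpha_0},\R^{\alpha_{H_1+1}})$ and $\mathcal{R}(\Phi_2)\in C(\R^{\beta_0},\R^{\beta_{H_2+1}})$ agree with $f\in C(\R^{d_2},\R^{d_3})$ and $g\in C(\R^{d_1},\R^{d_2})$, one reads off $\alpha_0=d_2=\beta_{H_2+1}$, $\alpha_{H_1+1}=d_3$, $\beta_0=d_1$, so that the width $\beta_{H_2+1}+\alpha_0$ of the ``junction'' layer prescribed by $\odot$ equals $2d_2$.

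I would then define $\Phi\in\mathbf{N}$ by concatenating the layers of $\Phi_2$ and then those of $\Phi_1$, with one extra layer in between, using as affine maps, in order: $(W^{g}_1,B^{g}_1),\dots,(W^{g}_{H_2},B^{g}_{H_2})$; then the map $y\mapsto\big(W^{g}_{H_2+1}y+B^{g}_{H_2+1},\,-(W^{g}_{H_2+1}y+B^{g}_{H_2+1})\big)\in\R^{2d_2}$; then the map $(a,b)\mapsto W^{f}_1(a-b)+B^{f}_1$ for $a,b\in\R^{d_2}$; and finally $(W^{f}_2,B^{f}_2),\dots,(W^{f}_{H_1+1},B^{f}_{H_1+1})$. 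Reading off the widths layer by layer shows $\mathcal{D}(\Phi)=(\beta_0,\dots,\beta_{H_2},2d_2,\alpha_1,\dots,\alpha_{H_1+1})$, which by the previous paragraph is precisely $\alpha\odot\beta$; note $\Phi$ has $H_1+H_2+1$ hidden layers, so $\mathcal{D}(\Phi)\in\N^{H_1+H_2+3}$, as required by the codomain of $\odot$.

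It remains to verify $\mathcal{R}(\Phi)=f\circ g$, which I would do by pushing an arbitrary $x\in\R^{d_1}$ through $\Phi$ and comparing with $\Phi_1$ and $\Phi_2$. After the first $H_2$ hidden layers one obtains exactly the activation produced after the last hidden layer of $\Phi_2$ on input $x$, call it $v$, so that $W^{g}_{H_2+1}v+B^{g}_{H_2+1}=(\mathcal{R}(\Phi_2))(x)=g(x)$. The junction affine map followed by $\mathbf{A}_{2d_2}$ then yields $\big(\mathbf{A}_{d_2}(g(x)),\,\mathbf{A}_{d_2}(-g(x))\big)$, and the next affine map followed by $\mathbf{A}_{\alpha_1}$ yields $\mathbf{A}_{\alpha_1}\!\big(W^{f}_1\big(\mathbf{A}_{d_2}(g(x))-\mathbf{A}_{d_2}(-g(x))\big)+B^{f}_1\big)=\mathbf{A}_{\alpha_1}\big(W^{f}_1 g(x)+B^{f}_1\big)$, where I use the identity $z=\mathbf{A}_{d_2}(z)-\mathbf{A}_{d_2}(-z)$ valid for all $z\in\R^{d_2}$. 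This is exactly the activation after the first hidden layer of $\Phi_1$ on input $g(x)$, and the remaining layers of $\Phi$ coincide with layers $2,\dots,H_1+1$ of $\Phi_1$, whence $(\mathcal{R}(\Phi))(x)=(\mathcal{R}(\Phi_1))(g(x))=f(g(x))$.

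The construction is elementary; the one point that needs care — and the reason the junction layer has width $2d_2$ rather than one simply absorbing $W^{g}_{H_2+1}$ into $W^{f}_1$ — is that $g(x)$ may have negative entries and therefore cannot be passed through a ReLU layer unchanged; the splitting $g(x)=\mathbf{A}(g(x))-\mathbf{A}(-g(x))$ repairs this at the cost of exactly the one extra layer that $\odot$ builds in. Apart from that, everything reduces to bookkeeping of layer widths and of the (linear-on-the-positive-orthant) action of $\mathbf{A}$.
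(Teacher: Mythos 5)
Your construction is correct, and it is essentially the same argument as the one the paper relies on: the paper omits the proof and cites [Hutzenthaler et al., Lemma 3.8], where the composition is built in exactly this way, concatenating the hidden layers of the two networks with a junction layer of width $\beta_{H_2+1}+\alpha_0=2d_2$ that transports $g(x)$ through the ReLU via the splitting $z=\mathbf{A}_{d_2}(z)-\mathbf{A}_{d_2}(-z)$. Your bookkeeping of the resulting architecture $(\beta_0,\dots,\beta_{H_2},2d_2,\alpha_1,\dots,\alpha_{H_1+1})=\alpha\odot\beta$ and of the realization $\mathcal{R}(\Phi)=f\circ g$ is accurate, so nothing is missing.
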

	\begin{lemma}\label{sumsDNN}
		([\citen{Hutzenthaler_2020}, Lemma 3.9])
		Assume Setting \ref{Setting3} and let $ M,H,p,q\in \N, h_1,h_2,...,h_M\in\R, k_i\in\mathbf{D}, f_i\in C(\R^p,\R^q), i\in [1,M]\cap\N, $ satisfy for all $ i\in[1,M]\cap\N $ that $ 	\dim(k_i)= H+2 \quad and \quad f_i\in\calR\left(\{\Phi\in \mathbf{N}:\mathcal{D}(\Phi)=k_i\}\right). $
		Then it holds that
		\begin{equation}
			\sum_{i=1}^{M}h_if_i \in \calR\left(\left\{\Phi \in \mathbf{N}:\mathcal{D}(\Phi)=\boxplus_{i=1}^{M}k_i\right\}\right).
		\end{equation}
	\end{lemma}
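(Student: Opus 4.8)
The plan is to construct the required network explicitly by \emph{parallelising} the $M$ given networks and combining their outputs in the final affine layer --- the standard ``stacking'' argument for sums of neural networks. First I would, for each $i\in[1,M]\cap\N$, fix a representation $\Phi_i=((W_1^{(i)},B_1^{(i)}),\dots,(W_{H+1}^{(i)},B_{H+1}^{(i)}))\in\mathbf{N}$ with $\calD(\Phi_i)=k_i$ and $\calR(\Phi_i)=f_i$, and write $k_i=(p,k_{i,1},\dots,k_{i,H},q)$; here the hypotheses $\dim(k_i)=H+2$ and $f_i\in C(\R^p,\R^q)$ force every $k_i$ to share the first entry $p$ and the last entry $q$, so that $\boxplus_{i=1}^Mk_i=(p,\sum_{i=1}^Mk_{i,1},\dots,\sum_{i=1}^Mk_{i,H},q)$ is unambiguous (the iterated $\boxplus$ being associative in this situation).

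Next I would define $\Psi=((\tilde W_1,\tilde B_1),\dots,(\tilde W_{H+1},\tilde B_{H+1}))$ as follows: let $\tilde W_1$ be the vertical block stack of $W_1^{(1)},\dots,W_1^{(M)}$ and $\tilde B_1$ the vertical stack of $B_1^{(1)},\dots,B_1^{(M)}$; for $n\in\{2,\dots,H\}$ let $\tilde W_n$ be the block-diagonal matrix with diagonal blocks $W_n^{(1)},\dots,W_n^{(M)}$ and $\tilde B_n$ the vertical stack of $B_n^{(1)},\dots,B_n^{(M)}$; and let $\tilde W_{H+1}=(h_1W_{H+1}^{(1)}\mid\dots\mid h_MW_{H+1}^{(M)})$ be the horizontal block stack and $\tilde B_{H+1}=\sum_{i=1}^Mh_iB_{H+1}^{(i)}$. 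Then $\Psi\in\mathbf{N}$ and $\calD(\Psi)=(p,\sum_ik_{i,1},\dots,\sum_ik_{i,H},q)=\boxplus_{i=1}^Mk_i$, so it remains only to compute $\calR(\Psi)$.

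For this, fix $x\in\R^p$, set $x_0^{(i)}=x$ and $x_n^{(i)}=\mathbf{A}_{k_{i,n}}(W_n^{(i)}x_{n-1}^{(i)}+B_n^{(i)})$ for $n\in[1,H]\cap\N$, so that $f_i(x)=W_{H+1}^{(i)}x_H^{(i)}+B_{H+1}^{(i)}$; and prove by induction on $n\in[1,H]\cap\N$ that the activation $\tilde x_n$ of $\Psi$ at $x$ (in the notation of Setting \ref{Setting31}) is the vertical stack of $x_n^{(1)},\dots,x_n^{(M)}$. The base case holds because $\tilde W_1x+\tilde B_1$ is the vertical stack of the $W_1^{(i)}x+B_1^{(i)}$ and $\mathbf{A}$ acts componentwise, and the induction step is identical, using that $\tilde W_n$ is block-diagonal and $\tilde B_n$ is stacked. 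Feeding $\tilde x_H$ into the output layer then yields $(\calR(\Psi))(x)=\tilde W_{H+1}\tilde x_H+\tilde B_{H+1}=\sum_{i=1}^M\big(h_iW_{H+1}^{(i)}x_H^{(i)}+h_iB_{H+1}^{(i)}\big)=\sum_{i=1}^Mh_if_i(x)$, which finishes the proof.

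I do not expect any genuine obstacle: the argument is purely combinatorial bookkeeping. The one point that must be stated with care is the inductive claim that componentwise application of $\mathbf{A}$ commutes with vertical concatenation of vectors, which is what guarantees that the $M$ parallel sub-networks never interact within $\Psi$. Two degenerate cases are worth a remark: when $H=1$ there are no block-diagonal middle layers, so $\Psi$ is just the stacking layer followed by the combining output layer; and when $M=1$ the construction reduces to rescaling $(W_{H+1}^{(1)},B_{H+1}^{(1)})$ by $h_1$, recovering the obvious fact that $\calD$ is invariant under such a rescaling.
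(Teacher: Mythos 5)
Your proposal is correct, and it is essentially the argument behind the result: the paper itself omits the proof and refers to [\citen{Hutzenthaler_2020}, Lemma 3.9], whose proof is precisely this parallelization construction — stacked first layer, block-diagonal hidden layers, and a weighted horizontally concatenated output layer — so your realization computation and the identification of $\calD(\Psi)$ with $\boxplus_{i=1}^{M}k_i$ match the intended route. Your care about the shared input/output dimensions $p,q$ (which makes the iterated $\boxplus$ well defined), the componentwise action of $\mathbf{A}_d$ decoupling the parallel blocks, and the degenerate cases $H=1$, $M=1$ is exactly the bookkeeping the cited proof carries out.
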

	\noindent The following lemma extends  [\citen{jentzen2019proof}, Lemma 5.4] as well as [\citen{Hutzenthaler_2020}, Lemma 3.6], and proves the existence of a DNN of arbitrary length representing the identity function.
	\begin{lemma}\label{Identity}
		Assume Setting \ref{Setting3} and let $ d,H\in\N$. Then it holds  that 
		\begin{equation}
			\textup{Id}_{\R^d}\in \mathcal{R}(\{\Phi\in\mathbf{N}:\mathcal{D}(\Phi)=\mathfrak{n}_{H+2}^d\}).
		\end{equation}
	\end{lemma}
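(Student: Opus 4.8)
The plan is to construct, for arbitrary length $H\in\N$, a network whose realization is $\textup{Id}_{\R^d}$ by exploiting the elementary identity $x = \max\{x,0\} - \max\{-x,0\}$, which lets the ReLU activation encode a linear coordinate through a hidden layer of width $2d$. Concretely, I would first build the ``one hidden layer'' identity network: take $W_1=\bigl(\begin{smallmatrix}\textup{Id}_{\R^d}\\ -\textup{Id}_{\R^d}\end{smallmatrix}\bigr)\in\R^{2d\times d}$, $B_1=0\in\R^{2d}$, $W_2=(\textup{Id}_{\R^d}\mid -\textup{Id}_{\R^d})\in\R^{d\times 2d}$, $B_2=0\in\R^d$. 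For $x_0=x\in\R^d$ one has $x_1=\mathbf{A}_{2d}(W_1x_0+B_1)=(\mathbf{A}_d(x),\mathbf{A}_d(-x))$, hence $W_2x_1+B_2=\mathbf{A}_d(x)-\mathbf{A}_d(-x)=x$, so this $\Phi$ satisfies $\calD(\Phi)=(d,2d,d)=\mathfrak{n}_{3}^d$ and $\calR(\Phi)=\textup{Id}_{\R^d}$, which settles the case $H=1$.

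For general $H\in\N$ I would iterate this construction: define a network of depth $H+1$ by stacking $H$ blocks of the above form. Formally one checks that with $W_1=\bigl(\begin{smallmatrix}\textup{Id}\\ -\textup{Id}\end{smallmatrix}\bigr)$, $B_1=0$, then for $n\in\{2,\dots,H\}$ taking $W_n=\bigl(\begin{smallmatrix}\textup{Id}&-\textup{Id}\\ -\textup{Id}&\textup{Id}\end{smallmatrix}\bigr)\in\R^{2d\times 2d}$, $B_n=0$, and finally $W_{H+1}=(\textup{Id}\mid -\textup{Id})\in\R^{d\times 2d}$, $B_{H+1}=0$, the hidden states satisfy $x_n=(\mathbf{A}_d(x),\mathbf{A}_d(-x))$ for every $n\in\{1,\dots,H\}$ by induction (using $\mathbf{A}_d(x)-\mathbf{A}_d(-x)=x$ together with $\mathbf{A}_d(\mathbf{A}_d(x))=\mathbf{A}_d(x)$ and $\mathbf{A}_d(-\mathbf{A}_d(x))=0$ when $x$ itself need not be nonnegative — more precisely, the components of $W_nx_{n-1}$ are exactly $x$ and $-x$, so applying $\mathbf{A}_d$ reproduces the invariant). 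Then $W_{H+1}x_H+B_{H+1}=x$, and the resulting $\Phi\in\mathbf N$ has $\calD(\Phi)=(d,\underbrace{2d,\dots,2d}_{H\text{-times}},d)=\mathfrak n_{H+2}^d$ and $\calR(\Phi)=\textup{Id}_{\R^d}$, as required.

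Alternatively — and this is probably the cleaner write-up given the lemmas already available — one can argue by induction on $H$ using Lemma~\ref{composition}: the base case $H=1$ is the explicit network above, and for the inductive step one composes the depth-$(H+1)$ identity network (dimension $\mathfrak n_{H+2}^d$) with the depth-$2$ identity network (dimension $\mathfrak n_{3}^d$) and observes that $\mathfrak n_{3}^d\odot\mathfrak n_{H+2}^d=\mathfrak n_{H+3}^d$ from the definition of $\odot$ in \eqref{defodot}, since $\odot$ merges the tail of the first argument with the head of the second by adding $\beta_{H+1}+\alpha_0=d+d=2d$. Since $\textup{Id}_{\R^d}\circ\textup{Id}_{\R^d}=\textup{Id}_{\R^d}$, Lemma~\ref{composition} then gives $\textup{Id}_{\R^d}\in\calR(\{\Phi\in\mathbf N:\calD(\Phi)=\mathfrak n_{H+3}^d\})$, completing the induction.

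The main obstacle is not conceptual but bookkeeping: one has to verify carefully that the hidden-layer invariant $x_n=(\mathbf{A}_d(x),\mathbf{A}_d(-x))$ is genuinely preserved by the middle blocks, i.e. that $\mathbf{A}_{2d}(W_n x_{n-1})=x_n$ when $W_n$ is the $2\times 2$ block matrix above — this uses that the argument of $\mathbf{A}_{2d}$ at each middle layer is precisely $(x,-x)$ (the positive and negative parts recombine linearly to the signed coordinate before the next ReLU), which must be checked componentwise and without any sign assumption on $x$. The matching of dimension vectors with $\mathfrak n_{H+2}^d$ and the $\odot$-computation are then immediate from the definitions in Setting~\ref{Setting3}.
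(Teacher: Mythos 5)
Your explicit construction is correct and is essentially the paper's own proof: the paper also uses a width-$2d$ hidden representation storing the positive and negative parts of each coordinate, with first layer mapping $x\mapsto(x^+,(-x)^+)$ and last layer recombining them. The only cosmetic difference is in the middle layers: the paper simply takes $W_l=\textup{Id}_{\R^{2d}}$ for $l\in[2,H]$, exploiting that the hidden state is nonnegative so the ReLU fixes it, whereas you use the block matrix $\bigl(\begin{smallmatrix}\textup{Id}&-\textup{Id}\\-\textup{Id}&\textup{Id}\end{smallmatrix}\bigr)$, which recombines to $(x,-x)$ and re-splits; both preserve the invariant $x_n=(\mathbf{A}_d(x),\mathbf{A}_d(-x))$, so this part of your argument stands as a complete proof for all $H\in\N$.

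Your ``alternative, cleaner'' route via Lemma \ref{composition}, however, has a bookkeeping error. From \eqref{defodot}, with $\alpha=\mathfrak n_3^d=(d,2d,d)$ (so $H_1=1$) and $\beta=\mathfrak n_{H+2}^d$ (so $H_2=H$) one gets
\begin{equation}
\mathfrak n_3^d\odot\mathfrak n_{H+2}^d=(d,\underbrace{2d,\dots,2d}_{H},\,d+d,\,2d,\,d)=\mathfrak n_{H+4}^d\in\N^{H+4},
\end{equation}
not $\mathfrak n_{H+3}^d$: the $\odot$ operation keeps the hidden $2d$ of the depth-$2$ identity \emph{and} creates a gluing layer of size $\beta_{H+1}+\alpha_0=2d$, so each composition with the one-hidden-layer identity increases the length of the dimension vector by $2$. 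Starting the induction only from the base case $H=1$ therefore reaches only odd $H$; to repair it you would need a second base case $H=2$ (given by a direct construction as in your first argument, or the paper's) and then induct in steps of two. Since your first argument already covers every $H$, the lemma is proved, but the composition-based write-up as stated is incomplete.
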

	\begin{proof}
		Let $ w_1=\binom{1}{-1} $ and  let $ W_1 \in \R^{2d\times d} $, $ B_1\in \R^{2d} $ satisfy 
		\begin{equation}
			W_1=
			\begin{pmatrix}
				w_1 & 0 &\ldots&0 \\
				0 & w_1&\ldots&0\\
				\vdots&\vdots &\ddots&\vdots\\
				0 &0 &\ldots&w_1
			\end{pmatrix}
			\in \R^{2d\times d},\quad B_1=
			0_{\R^{2d}}
			\in\R^{2d},
		\end{equation}
		for all $  l\in[2,H]\cap\N $ let $ W_l\in \R^{2d\times2d}, B_l\in \R^{2d} $ satisfy 
		\begin{equation}
			W_l=
			\textup{Id}_{\R^{2d}}
			\in \R^{2d\times 2d},\quad B_l= 0_{\R^{2d}}
			\in\R^{2d},
		\end{equation}
		and  let $ W_{H+1}\in \R^{d\times 2d}, B_{H+1}\in\R^d $ satisfy 
		\begin{equation}
			W_{H+1}= W_1^\top
			\in \R^{d\times 2d},\quad B_{H+1}= 0_{\R^d}
			\in\R^{d},
		\end{equation}
		 let $ \phi\in \mathbf{N} $ satisfy that $ \phi=((W_1,B_1),(W_2,B_2),...,(W_H,B_H),(W_{H+1},B_{H+1})), $ for every $ a\in \R  $ let $ a^+=\max\{a,0\} $, and let $ x^{0} =(x^{0}_1,...,x^{0}_d)\in \R^d $, $ x^{1},x^{2},...,x^{H} \in \R^{2d} $ satisfy for all $ n\in\N\cap[1,H] $ that 
		\begin{equation}
			x^{n}=\mathbf{A}_{2d}(W_nx^{n-1}+B_n).
		\end{equation}
		This implies that $ \calD(\phi)=\mathfrak{n}^d_{H+2}$.
		Furthermore,  it holds  that
		\begin{equation}
			\begin{aligned}
				x^1&=\mathbf{A}_{2d}(W_1x^0+B_1)=\mathbf{A}_{2d}\left  ((x_1^0,-x_1^0,...,x_d^0,-x_d^0)^\top\right )=\left ((x_1^0)^+,(-x_1^0)^+,...,(x_d^0)^+,(-x_d^0)^+\right )^\top,\\
				x^2&=\mathbf{A}_{2d}(W_2x^1+B_2)=\mathbf{A}_{2d}\left (\left ((x_1^0)^+,(-x_1^0)^+,...,(x_d^0)^+,(-x_d^0)^+\right )^\top\right )\\
				&=\left ((x_1^0)^+,(-x_1^0)^+,...,(x_d^0)^+,(-x_d^0)^+\right )^\top,\\
			\end{aligned}
		\end{equation}
	and continuing this procedure yields
	\begin{equation}
		\begin{aligned}
			x^H&=\mathbf{A}_{2d}(W_Hx^{H-1}+B_H)=\mathbf{A}_{2d}\left (\left ((x_1^0)^+,(-x_1^0)^+,...,(x_d^0)^+,(-x_d^0)^+\right )^\top\right )\\
			&=\left ((x_1^0)^+,(-x_1^0)^+,...,(x_d^0)^+,(-x_d^0)^+\right )^\top.
		\end{aligned}
	\end{equation}
It thus holds that 
		\begin{equation}
			\left(\calR(\phi)\right)(x^0)=W_{H+1}x^H+B_{H+1}=x^0.
		\end{equation}
		This shows, since  $ x^{0}\in \R^d $ was chosen arbitrarily,  that $ \calR(\phi)=\textup{Id}_{\R^d}. $ This and the fact that it holds that $ \calD(\phi) =\mathfrak{n}_{H+2}^d$ prove that $ \textup{Id}_{\R^d}\in \calR\left(\left\{\Phi\in\mathbf{N}:\calD(\Phi)=\mathfrak{n}_{H+2}^d\right\}\right). $ The proof is thus completed.
	\end{proof}
	
	\noindent The following auxiliary lemma states that if a given function can be represented by a DNN of  certain depth, then it can be represented by a DNN of any larger depth.
	\begin{lemma}\label{prolonging}
		Assume Setting \ref{Setting3}, let $ H,p,q\in\N$ and let $ g\in C(\R^p,\R^q) $ satisfy that $ g$  $ \in $ \\ $\calR \left(\{\Phi\in\mathbf{N}\colon\dim(\calD(\Phi))=H+2\}\right) $. Then for all $ n\in\N_0 $ we have 
		\begin{equation}
			g\in\calR\left(\{\Phi\in\mathbf{N}\colon \dim(\calD(\Phi))=H+2+n\}\right).
		\end{equation}
	\end{lemma}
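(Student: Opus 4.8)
The plan is to isolate the one-step version of the statement --- that a function representable by a DNN of some depth $m\in\N\cap[3,\infty)$ is also representable by one of depth $m+1$ --- and then to iterate it, starting from $m=H+2$; the case $n=0$ is precisely the hypothesis, so there is nothing to do there. As an alternative shortcut for $n\ge 2$ one can avoid the induction entirely: by Lemma \ref{Identity} the identity $\textup{Id}_{\R^q}$ on the target space lies in $\calR(\{\Phi\in\mathbf{N}\colon\calD(\Phi)=\mathfrak{n}_{n+1}^q\})$, so writing $g=\textup{Id}_{\R^q}\circ g$ and applying Lemma \ref{composition} produces a representing network whose dimension count is $(n+1)+(H+2)-1=H+2+n$; the only genuinely new content is therefore the case $n=1$, which the explicit construction below covers.

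For the one-step claim, fix $\Psi=((W_1,B_1),\dots,(W_L,B_L))\in\mathbf{N}$ with $\calD(\Psi)=(k_0,\dots,k_L)$, $L=m-1\ge 2$, and $\calR(\Psi)=g$, and insert one extra ReLU layer at the front that duplicates and sign-flips each input coordinate. Concretely, I would set $\widehat W_1=\binom{I_{k_0}}{-I_{k_0}}\in\R^{2k_0\times k_0}$, $\widehat B_1=0_{\R^{2k_0}}$, $\widehat W_2=(W_1\ \,{-}W_1)\in\R^{k_1\times 2k_0}$, $\widehat B_2=B_1$, and $\widehat W_{j+1}=W_j$, $\widehat B_{j+1}=B_j$ for $j\in\{2,\dots,L\}$, and let $\widehat\Psi=((\widehat W_1,\widehat B_1),\dots,(\widehat W_{L+1},\widehat B_{L+1}))\in\mathbf{N}$. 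Then $\calD(\widehat\Psi)=(k_0,2k_0,k_1,\dots,k_L)$, hence $\dim(\calD(\widehat\Psi))=L+2=m+1$, and since for every $x=(x_1,\dots,x_{k_0})\in\R^{k_0}$ one has $\mathbf{A}_{2k_0}(\widehat W_1x+\widehat B_1)=(\max\{x_1,0\},\max\{-x_1,0\},\dots,\max\{x_{k_0},0\},\max\{-x_{k_0},0\})$ and therefore $\widehat W_2\,\mathbf{A}_{2k_0}(\widehat W_1x+\widehat B_1)+\widehat B_2=W_1x+B_1$ by the componentwise identity $t=\max\{t,0\}-\max\{-t,0\}$, the first two layers of $\widehat\Psi$ reproduce the first hidden state $\mathbf{A}_{k_1}(W_1x+B_1)$ of $\Psi$ while all remaining layers of $\widehat\Psi$ coincide with those of $\Psi$; thus $\calR(\widehat\Psi)=\calR(\Psi)=g$, proving the claim. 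Applying this step $n$ times (to successively obtained representing networks of dimension $H+3,H+4,\dots,H+2+n$) finishes the proof.

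There is no real conceptual obstacle here; the one point worth noticing is that composing with an identity network --- whose depth is necessarily at least $3$ by Lemma \ref{Identity} --- raises the dimension count by at least $2$, so a composition argument alone cannot turn depth $H+2$ into depth $H+3$, which is exactly why the hands-on single-layer gadget above is needed for $n=1$. The verification that this gadget leaves the realized function unchanged is the same short ReLU computation already carried out in the proof of Lemma \ref{Identity}.
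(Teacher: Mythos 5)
Your proposal is correct. Its overall architecture coincides with the paper's: the case $n=0$ is trivial, the case $n\ge 2$ is handled exactly as in the paper by writing $g=\textup{Id}_{\R^q}\circ g$, invoking Lemma \ref{Identity} (with $H\curvearrowleft n-1$) and Lemma \ref{composition}, with the same dimension count $(n+1)+(H+2)-1=H+2+n$, and you correctly identify why $n=1$ needs a separate hands-on argument (an identity network has depth at least $3$, so composition raises the depth by at least $2$). The only genuine difference is the $n=1$ gadget. The paper inserts an extra layer $(\textup{Id}_{\R^{k_H}},0_{\R^{k_H}})$ just before the output layer and uses idempotence of the ReLU, $\max\{0,x\}=\max\{0,\max\{0,x\}\}$, so the inserted layer has the same width $k_H$ as the last hidden layer. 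You instead insert a layer at the input side, splitting each coordinate into positive and negative parts via $\widehat W_1=\binom{I_{k_0}}{-I_{k_0}}$ and recombining with $\widehat W_2=(W_1\ \,{-}W_1)$ through the identity $t=\max\{t,0\}-\max\{-t,0\}$; this is equally valid (one cosmetic slip: with your block-form $\widehat W_1$ the ReLU output is $(\max\{x_1,0\},\dots,\max\{x_{k_0},0\},\max\{-x_1,0\},\dots,\max\{-x_{k_0},0\})$ rather than the interleaved vector you list, but your $\widehat W_2$ is chosen consistently with the block ordering, so the computation $\widehat W_2\,\mathbf{A}_{2k_0}(\widehat W_1x)+\widehat B_2=W_1x+B_1$ and hence $\calR(\widehat\Psi)=g$ stands). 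Your construction costs a new layer of width $2k_0$ instead of duplicating the width-$k_H$ layer, which is immaterial here since the lemma makes no width claim; and your observation that iterating the one-step insertion alone already proves all $n\in\N$ gives a slightly more uniform (if marginally more laborious) alternative to the paper's split between $n=1$ and $n\ge 2$.
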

	\begin{proof}
		Let $ k_0, k_1,...,k_H,k_{H+1}\in\N, \Phi_g =( (W_1, B_1),...,(W_{H+1},B_{H+1})) \in \left[\prod_{n=1}^{H+1}(\R^{k_n\times k_{n-1}}\times \R^{k_n})\right]$ satisfy for all $ x_0\in\R^{k_0},...,x_H\in\R^{k_H} $ with $ \forall n\in\N\cap[1,H]\colon x_n=\mathbf{A}_{k_n}(W_nx_{n-1}+B_n) $ that $ (\calR(\Phi_g))(x_0)=W_{H+1}x_H+B_{H+1}=g(x_0). $
		For $ n=0, $ the claim is clear.
		For the case that $ n\in\N\cap[2,\infty), $ Lemma \ref{Identity} ( applied with $ H\curvearrowleft n-1 $ in the notation of Lemma \ref{Identity}) proves that $ \textup{Id}_{\R^q} \in \calR(\{\Phi\in\mathbf{N}\colon\calD(\Phi)=\mathfrak{n}^q_{n+1}\})$. This and Lemma \ref{composition} (applied with $ d_1\curvearrowleft p, d_2 \curvearrowleft q, d_3 \curvearrowleft q , g\curvearrowleft g, f \curvearrowleft \textup{Id}_{\R^q}, \alpha \curvearrowleft \mathfrak{n}^q_{n+1} ,\beta \curvearrowleft \calD(\Phi_g)$ in the notation of Lemma \ref{composition}) ensure that $ g\in\calR(\{\Phi\in\mathbf{N}\colon \dim(\calD(\Phi))=H+2+n\}). $\\
		For the case $ n=1, $  let $ \Psi=( (W_1, B_1),...,(W_H,B_H), (\textup{Id}_{\R^{k_H}},0_{\R^{k_H}}),(W_{H+1},B_{H+1})).  $
		This and the fact that for all $ x\in\R $ it holds that $ \max\{0,x\}=\max\{0,\max\{0,x\}\} $ ensures for all $ x_0\in\R^{k_0},...,x_H\in\R^{k_H} $ with $ \forall n\in\N\cap[1,H]\colon x_n=\mathbf{A}_{k_n}(W_nx_{n-1}+B_n) $ that 
		\begin{equation}
			\begin{aligned}
				(\calR(\Phi_g))(x_0)&=W_{H+1}\mathbf{A}_{k_H}(W_Hx_{H-1}+B_H) + B_{H+1}\\
				&=W_{H+1}\mathbf{A}_{k_H}(\mathbf{A}_{k_H}(W_Hx_{H-1}+B_H)) +B_{H+1}\\
				&=W_{H+1}\mathbf{A}_{k_H}(0_{\R^{k_H}}+\textup{Id}_{\R^{k_H}}\mathbf{A}_{k_H}(W_Hx_{H-1}+B_H))+B_{H+1}\\
				&=(\calR(\Psi))(x_0).
			\end{aligned}
		\end{equation}
		This combined with the fact that $ \dim(\calD(\Psi))=\dim(\calD(\Phi_g))+1$ completes the  proof.
	\end{proof}
	\subsection{A DNN representation for time-recursive functions}
	\begin{lemma}\label{DNN-recursion}
		Assume Setting \ref{Setting3}, let $ d,m,K\in \N, T\in (0,\infty),  $ let $ \tau_0,\tau_1,...\tau_K\in[0,T] $ satisfy $ 0\le \tau_0\le ...\le \tau_K\le T, $ let $ \sigma \in C(\R^d,\R^{d\times m}), $ for all $ v\in \R^d $ let $ \Phi_{\sigma,v}\in \mathbf{N} $ satisfy that $ \calR(\Phi_{\sigma,v})  = \sigma(\cdot)\cdot v$ and $ \calD(\Phi_{\sigma,v})=\calD(\Phi_{\sigma,0}) ,$ let $ \llcorner \cdot \lrcorner \colon \R\rightarrow\R $ satisfy for all $ t\in \R $ that $ \llcorner t \lrcorner =\max(\{\tau_0,\tau_1,...,\tau_K\}\cap ((-\infty,t)\cup\{\tau_0\})), $  let $ f\colon \R \rightarrow\R^m $, 
		for all $ s\in[0,T] $ let $ g_s\colon \R^d\rightarrow\R^d $ satisfy for all $ x\in\R^d$ that $ g_0(x)=x,  $ 
		\begin{equation}
			g_s(x)=g_{\llcorner s\lrcorner}(x) + \sigma(g_{\llcorner s\lrcorner}(x))(f(s)-f(\llcorner s\lrcorner)).
		\end{equation}
		Then for all $ s\in [0,T] $ there exists a $ \frakg_s \in \mathbf{N} $ such that 
		\begin{enumerate}[(i)]
			\item $ \calR(\frakg_s) =g_s$,
			\item $ \calD(\frakg_s) = \underset{l=1}{\overset{K}{\odot}} \left[\mathfrak{n}^d_{\dim(\calD(\Phi_{\sigma},0))}\boxplus \calD(\Phi_{\sigma,0})\right]$.
		\end{enumerate}
		
	\end{lemma}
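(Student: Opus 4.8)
The plan is to unwind the defining recursion of $g_s$ into a finite composition of single Euler steps of the form $y\mapsto y+\sigma(y)w$, to represent each such step — and also the identity $\textup{Id}_{\R^d}$ — by a network whose dimension vector is $\mathfrak{n}^d_{\dim(\calD(\Phi_{\sigma,0}))}\boxplus\calD(\Phi_{\sigma,0})$, and finally to concatenate exactly $K$ of these by means of the composition operation $\odot$ via Lemma \ref{composition}. For the first part I would fix $s\in[0,T]$ and iterate the floor map: set $s_0:=s$ and $s_{j+1}:=\llcorner s_j\lrcorner$. Since $\llcorner t\lrcorner\in\{\tau_0,\dots,\tau_K\}$ and $\tau_0\le\llcorner t\lrcorner<t$ whenever $t>\tau_0$, the sequence $(s_j)_j$ decreases strictly through grid points until it reaches $\tau_0$ and is constant afterwards, so there is a smallest $n=n(s)\le K$ with $s_n=\tau_0$ (here one uses $\tau_0=0$ and that $T$ belongs to the grid, so that the index never exceeds $K$). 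Putting $v_j:=f(s_{j-1})-f(s_j)$ and $\psi_j:=\textup{Id}_{\R^d}+\sigma(\cdot)\,v_j$ for $j\in\{1,\dots,n\}$, the defining relation $g_{s_{j-1}}(x)=g_{s_j}(x)+\sigma(g_{s_j}(x))\,v_j=(\psi_j\circ g_{s_j})(x)$ together with $g_{s_n}=g_0=\textup{Id}_{\R^d}$ telescopes to $g_s=\psi_1\circ\psi_2\circ\dots\circ\psi_n$.

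Next I would show that each factor lies in $\calR(\{\Phi\in\mathbf{N}:\calD(\Phi)=D\})$, where $L:=\dim(\calD(\Phi_{\sigma,0}))$ and $D:=\mathfrak{n}^d_L\boxplus\calD(\Phi_{\sigma,0})$. Indeed, Lemma \ref{Identity} (with $H\curvearrowleft L-2$) gives $\textup{Id}_{\R^d}\in\calR(\{\Phi:\calD(\Phi)=\mathfrak{n}^d_L\})$, while by hypothesis $\sigma(\cdot)\,v_j=\calR(\Phi_{\sigma,v_j})\in\calR(\{\Phi:\calD(\Phi)=\calD(\Phi_{\sigma,0})\})$; both dimension vectors have length $L$, so Lemma \ref{sumsDNN} (with $M\curvearrowleft 2$, $h_1\curvearrowleft h_2\curvearrowleft 1$) yields $\psi_j=\textup{Id}_{\R^d}+\sigma(\cdot)\,v_j\in\calR(\{\Phi:\calD(\Phi)=D\})$. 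The same argument with $v=0$ (since $\sigma(\cdot)\cdot 0\equiv 0=\calR(\Phi_{\sigma,0})$) shows $\textup{Id}_{\R^d}\in\calR(\{\Phi:\calD(\Phi)=D\})$ as well.

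I would then pad and compose. Since $g_s=\psi_1\circ\dots\circ\psi_n\circ\textup{Id}_{\R^d}\circ\dots\circ\textup{Id}_{\R^d}$ with $K-n$ trailing copies of $\textup{Id}_{\R^d}$, the map $g_s$ is a composition of exactly $K$ functions, each an element of $\calR(\{\Phi:\calD(\Phi)=D\})$ by the previous step. Applying Lemma \ref{composition} $K-1$ times (the nesting being immaterial as $\odot$ is associative, so the resulting dimension vector is $\underset{l=1}{\overset{K}{\odot}}D$) gives $g_s\in\calR(\{\Phi\in\mathbf{N}:\calD(\Phi)=\underset{l=1}{\overset{K}{\odot}}D\})$. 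Any $\frakg_s$ in this set with $\calR(\frakg_s)=g_s$ then satisfies $\calR(\frakg_s)=g_s$ and $\calD(\frakg_s)=\underset{l=1}{\overset{K}{\odot}}[\mathfrak{n}^d_{\dim(\calD(\Phi_{\sigma,0}))}\boxplus\calD(\Phi_{\sigma,0})]$, i.e.\ (i) and (ii).

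The main obstacle is the first step: one must argue carefully that iterating $\llcorner\cdot\lrcorner$ from an arbitrary $s\in[0,T]$ descends to $\tau_0$ in at most $K$ steps — this is precisely where the normalisations $\tau_0=0$ and $\tau_K=T$ are used — and that the telescoped composition is matched correctly with the iterated $\odot$ on dimension vectors. A minor point is that the second step tacitly requires $\dim(\calD(\Phi_{\sigma,0}))\ge 3$ (at least one hidden layer in $\Phi_{\sigma,0}$), so that $\mathfrak{n}^d_L$ and hence Lemma \ref{Identity} are available; this is harmless, since $\sigma(\cdot)v$ is in general genuinely nonlinear. Everything else is routine bookkeeping with $\odot$, $\boxplus$, and Lemmas \ref{Identity}, \ref{sumsDNN}, \ref{composition}.
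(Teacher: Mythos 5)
Your proposal is correct and follows essentially the same route as the paper: there, $g_s$ is written as a composition of exactly $K$ maps $x\mapsto x+\sigma(x)\bigl[f((s\vee\tau_{k-1})\wedge\tau_k)-f(\tau_{k-1})\bigr]$, so the steps beyond $\llcorner s\lrcorner$ carry the zero increment and reduce to $x+\sigma(x)\cdot 0$ — precisely your identity padding — and each factor is then realized in the architecture $\mathfrak{n}^d_{\dim(\calD(\Phi_{\sigma,0}))}\boxplus\calD(\Phi_{\sigma,0})$ via Lemmas \ref{Identity} and \ref{sumsDNN} and composed $K$ times with Lemma \ref{composition}, just as you do. The caveats you flag (reading the recursion so that $g_{\tau_0}=\textup{Id}_{\R^d}$ and $s$ does not exceed $\tau_K$, and that $\dim(\calD(\Phi_{\sigma,0}))\ge 3$, which holds automatically since every element of $\mathbf{N}$ has at least one hidden layer) are treated implicitly in the paper's proof in exactly the same way, so they do not constitute a gap relative to it.
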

	\begin{proof}
		Note that for all $ s\in [0,T], k\in [1,K]\cap\N $ we have 
		\begin{equation}
			(s\vee \tau_{k-1})\wedge \tau_k = \begin{cases}
				\tau_{k-1} &, s\le \tau_{k-1}\\
				s &, s\in (\tau_{k-1},\tau_k]\\
				\tau_k &, s>\tau_k
			\end{cases}.
		\end{equation}
		This ensures  for all $ s\in[0,T], x\in \R^d $ that
		\begin{equation}
			g_s(x)=x+\sum_{k=1}^K\sigma(g_{\tau_{k-1}}(x))\left[f((s\vee \tau_{k-1})\wedge \tau_k)- f(\tau_{k-1}) \right].
		\end{equation}
		Next for all $ s\in[0,T], k\in [1,K]\cap\N $ let $ \Phi_s^k \in C(\R^d, \R^d) $ satisfy for all $ x\in \R^d $ that 
		\begin{equation}
			\Phi_s^k(x)=x+\sigma(x)\left[f((s\vee \tau_{k-1})\wedge \tau_k)- f(\tau_{k-1})\right].
		\end{equation}
		For all $ k\in [1,K]\cap\N, s \in [0,T] $ let $ \Psi_s^k \in C(\R^d,\R^d) $ satisfy 
		\begin{equation}
			\Psi_s^k= \Phi_s^k \circ \Phi_s^{k-1}\circ \dots \circ \Phi_s^1.
		\end{equation}
	Note that for all $ k\in [1,K-1]\cap\N, s\le \tau_{k-1} $ it holds $ \Phi_s^k=\textup{Id}_{\R^d}. $
		This ensures for all $ k\in [1,K-1]\cap\N, n\in [k+1,K]\cap\N, s\le \tau_k $ that 
		\begin{equation}
			\Psi_s^k=\Psi_s^n,
		\end{equation}
		and in particular $ \Psi_s^k=\Psi_s^K. $
		Observe that for all $ k\in [1,K]\cap\N, s\le \tau_k, x\in\R^d $ we have $ \Psi_s^k(x)=g_s(x) $ and thus, for all $ s\in[0,T], x\in \R^d $ it holds $ \Psi_s^K(x)=g_s(x). $\\
		
		\noindent Next, Lemma \ref{Identity} (applied with $ d\curvearrowleft d, H \curvearrowleft \dim(\calD(\Phi_{\sigma,0}))-2$ in the notation of Lemma \ref{Identity}) ensures for all $ s\in[0,T] $ that 
		\begin{equation}
			\textup{Id}_{\R^d}\in \calR\left(\left\{\Phi\in\mathbf{N}\colon \calD(\Phi)=\mathfrak{n}^d_{\dim(\calD(\Phi_{\sigma,0}))}\right\}\right).
		\end{equation}
		Note that Lemma \ref{sumsDNN} (applied for all $ s\in[0,T], k\in [1,K]\cap\N $ with $ M\curvearrowleft 2, p\curvearrowleft d, q\curvearrowleft d, H \curvearrowleft\dim(\calD(\Phi_{\sigma,0})), h_1\curvearrowleft 1, h_2 \curvearrowleft1, f_1\curvearrowleft\textup{Id}_{\R^d}, f_2 \curvearrowleft \sigma(\cdot)\left[f((s\vee\tau_{k-1})\wedge\tau_k)-f(\tau_{k-1})\right] $, $ k_1\curvearrowleft \mathfrak{n}^d_{\dim(\calD(\Phi_{\sigma,0}))}, k_2 \curvearrowleft \calD(\Phi_{\sigma,0})$ in the notation of Lemma \ref{sumsDNN}) proves for all $ s\in [0,T], k\in[1,K]\cap\N $ that 
		\begin{equation}
			\Phi_s^k\in \calR\left(\left\{\Phi\in\mathbf{N}\colon \calD(\Phi)= \mathfrak{n}^d_{\dim(\calD(\Phi_{\sigma,0}))}\boxplus \calD(\Phi_{\sigma,0})  \right\}\right).
		\end{equation}
		This and Lemma \ref{composition} (applied for all $ s\in[0,T] $ with $ d_1\curvearrowleft d, d_2\curvearrowleft d, d_3\curvearrowleft d,  $ $ f \curvearrowleft \Phi_s^2, g\curvearrowleft \Phi_s^1 $, $ \alpha \curvearrowleft \mathfrak{n}^d_{\dim(\calD(\Phi_{\sigma,0}))}\boxplus \calD(\Phi_{\sigma,0}) , \beta \curvearrowleft \mathfrak{n}^d_{\dim(\calD(\Phi_{\sigma,0}))}\boxplus \calD(\Phi_{\sigma,0})  $ in the notation of Lemma \ref{composition}) show for all $ s\in[0,T] $ that 
		\begin{equation}
			\Psi_{s}^2=\Phi_s^2 \circ \Phi_s^1 \in \calR\left(\left\{ \Phi\in \mathbf{N}\colon \calD(\Phi)= \underset{l=1}{\overset{2}{\odot}}\left[\mathfrak{n}^d_{\dim(\calD(\Phi_{\sigma,0}))}\boxplus \calD(\Phi_{\sigma,0}) \right] \right\}\right).
		\end{equation}
		This and Lemma \ref{composition} (applied for all $ s\in[0,T] $ with $ d_1\curvearrowleft d, d_2\curvearrowleft d, d_3\curvearrowleft d,  $ $ f \curvearrowleft \Phi_s^3, g\curvearrowleft \Psi_s^2 $, $ \alpha \curvearrowleft \mathfrak{n}^d_{\dim(\calD(\Phi_{\sigma,0}))}\boxplus \calD(\Phi_{\sigma,0}) , \beta \curvearrowleft  \underset{l=1}{\overset{2}{\odot}}\left[\mathfrak{n}^d_{\dim(\calD(\Phi_{\sigma,0}))}\boxplus \calD(\Phi_{\sigma,0}) \right]  $ in the notation of Lemma \ref{composition}) show for all $ s\in[0,T] $ that 
		\begin{equation}
			\Psi_{s}^3=\Phi_s^3 \circ \Psi_s^2 \in \calR\left(\left\{ \Phi\in \mathbf{N}\colon \calD(\Phi)= \underset{l=1}{\overset{3}{\odot}}\left[\mathfrak{n}^d_{\dim(\calD(\Phi_{\sigma,0}))}\boxplus \calD(\Phi_{\sigma,0}) \right] \right\}\right).
		\end{equation}
		Continuing this procedure hence demonstrates that for all $ s\in[0,T], k\in[1,K]\cap\N $ it holds 
		\begin{equation}
			\Psi_{s}^k\in \calR\left(\left\{ \Phi\in \mathbf{N}\colon \calD(\Phi)= \underset{l=1}{\overset{k}{\odot}}\left[\mathfrak{n}^d_{\dim(\calD(\Phi_{\sigma,0}))}\boxplus \calD(\Phi_{\sigma,0}) \right] \right\}\right).
		\end{equation}
		This and the fact that for all $ s\in[0,T], x\in\R^d $ it holds $ \Psi_s^K(x)=g_s(x) $ establishes for all $ s\in[0,T] $ the existence of $ \frakg_s \in \mathbf{N} $ satisfying for all $ x\in\R^d $ that $ \calR(\frakg_s)\in C(\R^d,\R^d) $ and 
		\begin{equation}
			\calD(\frakg_s)=\underset{l=1}{\overset{K}{\odot}}\left[\mathfrak{n}^d_{\dim(\calD(\Phi_{\sigma,0}))}\boxplus \calD(\Phi_{\sigma,0}) \right].
		\end{equation}
		This finishes the proof.
	\end{proof}
	\subsection{A DNN representation of Euler-Maruyama approximations}
	In the following lemma we construct a DNN that represents an Euler-Maruyama approximation. 
	This construction is similar to the one developed in [\citen{grohs2019spacetime}], where a DNN representation of Euler approximations for ordinary differential equations is constructed.
	
	\begin{lemma}\label{DNNEULER}
		Assume Setting \ref{Setting3}, let $ d,K\in\N, T\in (0,\infty)$, $ \tau_0,...,\tau_K\in[0,T] $ satisfy $ 0\le \tau_0 \le ... \le \tau_K\le T, $ let $ \mu\in C(\R^d,\R^d), \sigma \in C(\R^d,\R^{d\times d}), $ for all $ v\in\R^d $ let $ \Phi_\mu, \Phi_{\sigma,v}\in\mathbf{N} $ satisfy $ \calR(\Phi_\mu)=\mu, \calR(\Phi_{\sigma,v})=\sigma(\cdot)v, $  $ \calD(\Phi_{\sigma,v})=\calD(\Phi_{\sigma,0}), $
		let $ (\Omega, \mathcal{F}, \mathbb{P}, (\F_t)_{t\in[0,T]}) $ be a filtered probability space satisfying the usual conditions, let $ \Theta = \cup_{n \in \N}\mathbb{Z}^n, $ let $ \mathbf{W}^{\theta}\colon \Omega\times [0,T]\rightarrow \R^d, \theta\in \Theta, $ be i.i.d. standard $ (\F_t)_{t\in[0,T]}$-Brownian motions, let $ \llcorner \cdot \lrcorner\colon \R\rightarrow\R $ satisfy for all $ t\in \R $ that $ \llcorner t\lrcorner = \max(\{\tau_0, ...\tau_K\}\cap ((-\infty,t)\cup\{\tau_0\})), $ for all $ t\in [0,T], x\in \R^d, \theta \in \Theta $ let $ Y_{t}^{\theta,x}\colon [t,T]\times \Omega \rightarrow \R^d,  $ be measurable and satisfy for all $ s\in[t,T]$ that $ Y_{t,t}^{\theta,x}=x $ and 
		\begin{equation}
			Y_{t,s}^{\theta,x}-Y_{t,\max\{t,\llcorner s\lrcorner\}}^{\theta,x} = \mu(Y_{t,\max\{t,\llcorner s\lrcorner\}}^{\theta,x})(s-\max\{t,\llcorner s\lrcorner\}) +\sigma(Y_{t,\max\{t,\llcorner s\lrcorner\}}^{\theta,x})(\mathbf{W}_{s}^\theta-\mathbf{W}^\theta_{\max\{t,\llcorner s \lrcorner\}}),
		\end{equation}
		and let $ \omega\in\Omega. $ Then there exists a family $ (\calY_{t,s}^\theta)_{t,s\in[0,T], \theta\in\Theta}\subseteq \mathbf{N} $ such that 
		\begin{enumerate}[(i)]
			\item for all $ t\in[0,T], s\in[t,T],\theta\in\Theta $ it holds that $ \calR(\calY_{t,s}^\theta)\in C(\R^d,\R^d) $ and $ (\calR(\calY_{t,s}^\theta))(x)=Y_{t,s}^{\theta,x}(\omega), $
			\item for all $ t_1,t_2\in[0,T], $ $ s_1\in[t_1,T],s_2\in[t_2,T] ,\theta_1,\theta_2\in\Theta$ it holds that $ \calD(\calY_{t_1,s_1}^{\theta_1})=\calD(\calY_{t_2,s_2}^{\theta_2}), $
			\item for all $ t\in[0,T], s\in[t,T], \theta\in\Theta $ it holds that $ \dim(\calD(\calY_{t,s}^\theta))=K(\max\{\dim(\calD(\Phi_\mu)), \dim(\calD(\Phi_{\sigma,0}))\}-1), $
			\item for all $ t\in[0,T], s\in[t,T], \theta\in\Theta $ it holds that $ \mnrm{\calD(\calY_{t,s}^\theta)}\le \max\{2d, \mnrm{\calD(\Phi_\mu)}, \mnrm{\calD(\Phi_{\sigma,0})}\}. $
		\end{enumerate}
	\end{lemma}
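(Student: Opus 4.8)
The plan is to follow the construction underlying Lemma~\ref{DNN-recursion}, now carrying \emph{both} the drift and the diffusion increment, and to arrange that the architecture produced is literally the same for every admissible triple $(t,s,\theta)$, so that only the weights and biases of $\calY_{t,s}^\theta$ encode the dependence on $(t,s,\theta,\omega)$. Fix $\omega\in\Omega$. Exactly as in the proof of Lemma~\ref{DNN-recursion}, for $t\in[0,T]$, $s\in[t,T]$, $\theta\in\Theta$ and $k\in\{1,\dots,K\}$ set $\rho_k:=(s\vee\max\{t,\tau_{k-1}\})\wedge\max\{t,\tau_k\}$, $a_k:=\rho_k-\max\{t,\tau_{k-1}\}\ge 0$, $\xi_k:=\mathbf{W}^\theta_{\rho_k}(\omega)-\mathbf{W}^\theta_{\max\{t,\tau_{k-1}\}}(\omega)\in\R^d$, and let $\Phi^{\theta,k}_{t,s}\in C(\R^d,\R^d)$ be given by $\Phi^{\theta,k}_{t,s}(y)=y+a_k\mu(y)+\sigma(y)\xi_k$. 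A telescoping-and-truncation argument identical to the one in Lemma~\ref{DNN-recursion} then yields, for all $t\in[0,T]$, $s\in[t,T]$, $\theta\in\Theta$, $x\in\R^d$,
\[
Y^{\theta,x}_{t,s}(\omega)=\bigl(\Phi^{\theta,K}_{t,s}\circ\Phi^{\theta,K-1}_{t,s}\circ\cdots\circ\Phi^{\theta,1}_{t,s}\bigr)(x);
\]
for indices $k$ whose grid cell does not meet $[t,s]$ one has $a_k=0$ and $\xi_k=0$, whence $\Phi^{\theta,k}_{t,s}=\textup{Id}_{\R^d}$, yet every step will be realised below by one and the same non-trivial architecture (and for $s<t$ one simply puts $\calY^\theta_{t,s}:=\calY^\theta_{t,t}$).

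Secondly, I would represent a single step by a fixed DNN architecture. Put $H^{*}:=\max\{\dim(\calD(\Phi_\mu)),\dim(\calD(\Phi_{\sigma,0}))\}$, which satisfies $H^{*}\ge 3$ since every network in $\mathbf{N}$ has at least one hidden layer. Lemma~\ref{Identity} (with $H\curvearrowleft H^{*}-2$) gives $\textup{Id}_{\R^d}\in\calR(\{\Phi\in\mathbf{N}\colon\calD(\Phi)=\mathfrak{n}^d_{H^{*}}\})$; we have $\mu\in\calR(\{\Phi\colon\calD(\Phi)=\calD(\Phi_\mu)\})$; and $y\mapsto\sigma(y)\xi_k=(\calR(\Phi_{\sigma,\xi_k}))(y)$ with $\calD(\Phi_{\sigma,\xi_k})=\calD(\Phi_{\sigma,0})$ by hypothesis, where moreover $\calR(\Phi_{\sigma,0})\equiv 0$, so the trivial steps arise as the special case $a_k=\xi_k=0$. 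By Lemma~\ref{prolonging} the last two functions are also realised by architectures of dimension $H^{*}$, and these prolonged architectures depend only on $\calD(\Phi_\mu)$, $\calD(\Phi_{\sigma,0})$ and $H^{*}$, not on $k,t,s,\theta,\omega$. Lemma~\ref{sumsDNN} (with $M\curvearrowleft 3$, $h_1\curvearrowleft 1$, $h_2\curvearrowleft a_k$, $h_3\curvearrowleft 1$, $f_1\curvearrowleft\textup{Id}_{\R^d}$, $f_2\curvearrowleft\mu$, $f_3\curvearrowleft\sigma(\cdot)\xi_k$) then produces a single architecture $\mathfrak{a}\in\mathbf{D}$ with input- and output-dimension $d$ and $\dim(\mathfrak{a})=H^{*}$ such that $\Phi^{\theta,k}_{t,s}\in\calR(\{\Phi\in\mathbf{N}\colon\calD(\Phi)=\mathfrak{a}\})$ for all $k,t,s,\theta$.

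Thirdly, I would compose. By Lemma~\ref{composition}, applied $K-1$ times, the map $x\mapsto Y^{\theta,x}_{t,s}(\omega)$ lies in $\calR(\{\Phi\colon\calD(\Phi)=\underset{l=1}{\overset{K}{\odot}}\mathfrak{a}\})$, so I may pick $\calY^\theta_{t,s}\in\mathbf{N}$ with $\calD(\calY^\theta_{t,s})=\underset{l=1}{\overset{K}{\odot}}\mathfrak{a}$ and $(\calR(\calY^\theta_{t,s}))(x)=Y^{\theta,x}_{t,s}(\omega)$. Since $\underset{l=1}{\overset{K}{\odot}}\mathfrak{a}$ does not depend on $(t,s,\theta)$, items (i) and (ii) follow immediately. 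Item (iii) follows from $\dim(\alpha\odot\beta)=\dim(\alpha)+\dim(\beta)-1$ combined with $\dim(\mathfrak{a})=H^{*}$, and item (iv) would follow by bounding $\mnrm{\mathfrak{a}}$ via Lemma~\ref{triangle} and observing that the only layers the $\odot$-operations introduce are interfaces of width $2d$.

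The main obstacle is item (iv). The parallel construction just sketched only gives a width bound that is \emph{additive} in $2d$, $\mnrm{\calD(\Phi_\mu)}$ and $\mnrm{\calD(\Phi_{\sigma,0})}$ (via Lemma~\ref{triangle}), whereas (iv) asserts the \emph{maximum} of these quantities. Attaining it forces a considerably more frugal realisation of a single Euler step: one in which the carry of the term $y$ and the evaluations $y\mapsto\mu(y)$, $y\mapsto\sigma(y)v$ are interleaved inside a common width budget of $\max\{2d,\mnrm{\calD(\Phi_\mu)},\mnrm{\calD(\Phi_{\sigma,0})}\}$ rather than laid side by side, so that, after $\odot$-composition (whose new layers are interfaces of width $2d$), no hidden layer of $\calY^\theta_{t,s}$ exceeds that bound. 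Designing this block and checking that it remains compatible with the $(t,s,\theta)$-uniformity of the architecture is where the bulk of the work lies; once it is in place, items (i)--(iii) are routine bookkeeping with Lemmas~\ref{triangle}--\ref{prolonging} and~\ref{DNN-recursion} and the arithmetic of $\odot$ and $\boxplus$.
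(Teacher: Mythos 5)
Your route for items (i)--(iii) is essentially the paper's: the paper also realises one Euler step as a $\boxplus$-parallelisation of the identity, the drift and the diffusion networks and then $\odot$-composes $K$ such blocks, the only cosmetic difference being that it first stacks $\mu$ and $\sigma$ into one map $\Sigma\in C(\R^d,\R^{d\times(d+1)})$ with $\Sigma(x)v=\mu(x)v_1+\sigma(x)v'$, feeds it the vector $f^\theta(t)=(t,\mathbf{W}^\theta_t(\omega))$, and then quotes Lemma \ref{DNN-recursion} (with grid points $\tau_k\vee t$) instead of re-deriving the truncated-composition identity by hand as you do; in both cases the resulting architecture is $\odot_{l=1}^{K}\bigl[\mathfrak{n}^d_{\dim(\calD(\Phi_{\sigma,0}))}\boxplus\calD(\Phi_\mu)\boxplus\calD(\Phi_{\sigma,0})\bigr]$ (after equalising depths via Lemma \ref{prolonging}), which is independent of $(t,s,\theta)$, so (i), (ii) and the depth count (iii) come out exactly as in your sketch.

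The genuine gap is item (iv): your proposal explicitly does not prove it. You correctly note that Lemma \ref{triangle} only gives the additive bound $2d+\mnrm{\calD(\Phi_\mu)}+\mnrm{\calD(\Phi_{\sigma,0})}$ for the width of one step block, you announce that a more frugal, interleaved block would be needed to reach the stated maximum, and you stop without constructing it; since (iv) is exactly the estimate that feeds the constant $c$ and the bound $c(3M)^n$ in Lemma \ref{U=R}, the lemma as stated remains unproven in your write-up. Be aware, though, that the paper does not build such a frugal block either: it deduces (iv) from its explicit architecture together with the composition inequality $\mnrm{\alpha\odot\beta}\le\max\{\mnrm{\alpha},\mnrm{\beta},2\alpha_0\}$, a step which, by your own additivity observation, honestly only yields $\mnrm{\calD(\calY_{t,s}^\theta)}\le\max\{2d,\,2d+\mnrm{\calD(\Phi_\mu)}+\mnrm{\calD(\Phi_{\sigma,0})}\}$ for this construction; the stated maximum is recovered only up to a bounded factor, which is all the downstream applications actually require. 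So to close your argument you should either carry out the interleaved-block construction you allude to, or prove the additive bound and propagate the correspondingly enlarged (but still harmless, merely constant-changing) width constant through Lemma \ref{U=R}; as submitted, item (iv) is simply left open.
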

	\begin{proof}
		Without loss of generality assume that $ \dim(\Phi_\mu)=\dim(\calD(\Phi_{\sigma,0})), $ otherwise we may refer to Lemma \ref{prolonging}.
		Let $ \Sigma \in C(\R^d,\R^{d\times(d+1)}) $ satisfy for all $ x\in\R^d, i\in \{1,...,d\}, j\in \{1,...,d+1\} $ that 
		\begin{equation}
			(\Sigma(x))_{i,j}=\begin{cases}
				\mu_i(x) &, j=1\\
				\sigma_{i,j-1}(x)&,\text{ else }
			\end{cases}.
		\end{equation}
	Hence, for all $ v=(v_1,v')\in\R\times \R^d, $ $ x\in\R^d $ it holds that
	\begin{equation}
		\Sigma(x)v= \mu(x)v_1+\sigma(x)v'.
	\end{equation}
This and Lemma \ref{sumsDNN} (applied for all $ t\in[0,T], \theta\in\Theta, v=(v_1,v')\in\R\times\R^d $ with $ M\curvearrowleft 2 $, $ H\curvearrowleft\dim(\calD(\Phi_{\sigma,0})) $, $ p\curvearrowleft d, $ $ q\curvearrowleft d, $ $ h\curvearrowleft v_1, $ $ h_2\curvearrowleft 1 $, $ k_1\curvearrowleft\calD(\Phi_\mu) $, $ k_2\curvearrowleft\calD(\Phi_{\sigma,0}) ,$ $ f_1\curvearrowleft\mu, $ $ f_2\curvearrowleft\sigma(\cdot)v' $ in the notation of Lemma \ref{sumsDNN}) ensure for all $ t\in[0,T],\theta\in\Theta, v=(v_1,v')\in\R\times\R^d $ that
\begin{equation}
	\Sigma(\cdot)v \in \calR\left( \{\Phi \in \mathbf{N}\colon \calD(\Phi)=\calD(\Phi_\mu)\boxplus\calD(\Phi_{\sigma,0})\}\right).
\end{equation}
		For all $ \theta \in \Theta $ let $ f^\theta\colon \R \rightarrow \R^{d+1} $ satisfy for all $ t\in [0,T] $ that 
		\begin{equation}
			f^\theta(t)=\begin{pmatrix}
				t\\
				\mathbf{W}^\theta_t(\omega)
			\end{pmatrix}.
		\end{equation}
		Observe that for all $ x\in\R^d, t\in [0,T], \theta \in \Theta $ it holds
		\begin{equation}
			\Sigma(x)f^\theta(t)=\mu(x)t +\sigma(x)\mathbf{W}^\theta_t(\omega).
		\end{equation}
		Hence Lemma \ref{DNN-recursion} (applied for all $ t\in[0,T], s\in[t,T] \theta \in \Theta $ with $ d \curvearrowleft d, m\curvearrowleft d+1, $ $ K\curvearrowleft K, $  $ \tau_0 \curvearrowleft \tau_0 \vee t, $  $ \tau_1 \curvearrowleft \tau_1 \vee t,$...$, \tau_K \curvearrowleft \tau_K \vee t,  $ $ \sigma \curvearrowleft \Sigma, $ $ f\curvearrowleft f^\theta $, $ g_s \curvearrowleft Y_{t,s}^{\theta, \cdot} $ in the notation of Lemma \ref{DNN-recursion}) shows for all $ t\in[0,T], s\in[t,T], \theta \in \Theta $ the existence of a $ \calY_{t,s}^\theta \in \mathbf{N} $ such that 
		\begin{enumerate}[(I)]
			\item $ \calR(\calY_{t,s}^{\theta})\in C(\R^d,\R^d), $ satisfying for all $ x\in \R^d $ that $ (\calR(\calY_{t,s}^{\theta}))(x)=Y_{t,s}^{\theta,x}(\omega), $
			\item $ \calD(\calY_{t,s}^\theta)= \underset{l=1}{\overset{K}{\odot}}\left[\mathfrak{n}^d_{\dim(\calD(\Phi_{\sigma,0}))}\boxplus\calD(\Phi_\mu)\boxplus \calD(\Phi_{\sigma,0})\right]. $
		\end{enumerate}
		Observe that $ \calD(\calY_{t,s}^\theta) $ does not depend on $ t,s $ or $ \theta,$ which thus implies item $ (ii) $. 
		Item $ (iii) $ follows from item (II) and the definition of $ \odot. $
		
		Note that for all $ H_1, H_2, \alpha_0,...,\alpha_{H_1+1}, \beta_0,...,\beta_{H_2+1} \in \N $, $ \alpha, \beta \in \mathbf{D} $ satisfying $ \alpha=(\alpha_0,...,\alpha_{H_1+1}) $ and $ \beta=(\beta_0,...,\beta_{H_2+1}) $ with $ \alpha_0=\beta_{H_2+1} $ it holds that $ \mnrm{\alpha \odot \beta}\le \max\{\mnrm{\alpha},\mnrm{\beta}, 2\alpha_0\}. $
		This combined with item (II) imply item $ (iv). $
		The proof is thus completed.
	\end{proof}
	\color{black}
	\subsection{A DNN representation of MLP approximations}
	In the following lemma we establish a central result of this article: MLP approximations can be represented by DNNs. With the help of Lemma \ref{DNNEULER} we extend [\citen{Hutzenthaler_2020}, Lemma 3.10], where the special case of MLP approximations for  semilinear heat equations is treated.
	\begin{lemma}\label{U=R}
		Assume Setting \ref{Setting3}, let $ c,d,K,M\in \N $, $ T \in (0,\infty) $, let $ \tau_0,...,\tau_K\in [0,T] $ satisfy $ 0=\tau_0 \le \tau_1\le ...\le \tau_K=T $, let $ \mu \in C(\R^d,\R^d), $ $ \sigma =(\sigma_{i,j})_{i,j\in\{1,...,d\}}\in C(\R^d,\R^{d\times d}),  $ $ f\in C(\R,\R), $ $ g\in C(\R^d,\R) $, 
		for all $ v\in\N $  let $ \Phi_\mu,$  $ \Phi_{\sigma,v},$  $ \Phi_f, \Phi_g  \in \mathbf{N}$ satisfy $ \calR(\Phi_\mu)=\mu, \calR(\Phi_f)=f, \calR(\Phi_g)=g ,$ $ \calR(\Phi_{\sigma,v})=\sigma(\cdot)v, $
		$ \calD(\Phi_{\sigma,v})=\calD(\Phi_{\sigma,0}) $, 
		\begin{equation}
			c \ge \max\{ 2d, \mnrm{\calD(\Phi_f)},\mnrm{\calD(\Phi_g)}, \mnrm{\calD(\Phi_\mu)},\mnrm{\calD(\Phi_{\sigma,0})}\},
		\end{equation}
		let $ (\Omega,\mathcal{F},\mathbb{P},(\F_t)_{t\in[0,T]}) $ be a  filtered probability space satisfying the usual conditions, let $ \Theta = \cup_{n\in\N}\mathbb{Z}^n $, let $ \mathfrak{t}^\theta\colon\Omega\rightarrow[0,1] , \theta\in\Theta$, be i.i.d. random variables,
		assume for all $ t\in(0,1)$ that $ \mathbb{P}(\mathfrak{t}^0\le t)=t $, let $ \mathfrak{T}^\theta\colon[0,T]\times\Omega\rightarrow[0,T] $ satisfy for all $ \theta\in\Theta, t\in[0,T] $ that $ \mathfrak{T}_t^\theta =t+(T-t)\mathfrak{t}^\theta $,
		let $ W^\theta\colon[0,T]\times\Omega\rightarrow\R^d, \theta \in \Theta, $ be i.i.d. standard $ (\F_t)_{t\in[0,T]}$-Brownian motions, assume that $ (\mathfrak{t}^\theta)_{\theta\in\Theta} $ and $ (W^\theta)_{\theta \in \Theta} $ are independent,
		let $ \lfloor\cdot\rfloor\colon\R\rightarrow\R $ satisfy for all $ t\in\R $ that $ \lfloor t \rfloor= \max\left(\{\tau_0, \tau_1,...,\tau_K\}\cap ((-\infty,t)\cup\{\tau_0\})\right) $, 
		let $ Y_t^{\theta,x}\colon[t,T]  \times \Omega \rightarrow \R^d$, $ \theta\in\Theta,t\in[0,T], x\in\R^d$ be measurable, satisfying for all $ t\in [0,T], x\in\R^d, s\in[t,T],\theta\in\Theta, \omega\in\Omega $ that $ Y_{t,t}^{\theta,x}=x $ and
		\begin{equation}
			Y_{t,s}^{\theta,x}-Y_{t,\max\{t,\lfloor s\rfloor\}}^{\theta,x} = \mu(Y_{t,\max\{t,\lfloor s\rfloor\}}^{\theta,x})(s-\max\{t,\lfloor s\rfloor\})+\sigma(Y_{t,\max\{t,\lfloor s\rfloor\}}^{\theta,x})(W_s^\theta-W^\theta_{\max\{t,\lfloor s\rfloor\}})
		\end{equation}
		let $ U_n^\theta\colon[0,T]\times\R^d\times\Omega\rightarrow\R, n\in \mathbb{Z}, \theta\in\Theta,$ satisfy for all $ \theta \in \Theta, n\in\N_0, t\in [0,T], x\in\R^d $ that
		\begin{equation}\label{DNNMLP_MLP}
			\begin{aligned}
				&U_n^\theta(t,x)=\dfrac{\mathbbm{1}_\N(n)}{M^n}\sum_{i=1}^{M^n}g(Y_{t,T}^{(\theta,0,-i),x})\\
				&+\sum_{l=0}^{n-1}\dfrac{(T-t)}{M^{n-l}}\left[\sum_{i=1}^{M^{n-l}}(f\circ U_l^{(\theta,l,i)}-\mathbbm{1}_\N(l)f\circ U_{l-1}^{(\theta,-l,i)})(\mathfrak{T}_t^{(\theta,l,i)},Y_{t,\mathfrak{T}_t^{(\theta,l,i)}}^{(\theta,l,i),x})\right],
			\end{aligned}
		\end{equation}
		and let $ \omega \in \Omega$. Then for all $ n\in\N_0 $ there exists a family $ (\Phi_{n,t}^\theta)_{\theta\in\Theta, t\in[0,T]}\subseteq \mathbf{N}$ such that 
		\begin{enumerate}[(i)]
			\item\label{DNNMLP_item_i} it holds for all $ t_1,t_2\in[0,T], \theta_1,\theta_2\in\Theta $ that \begin{equation}
				\mathcal{D}(\Phi_{n,t_1}^{\theta_1})=\mathcal{D}(\Phi_{n,t_2}^{\theta_2})
			\end{equation}
			\item\label{DNNMLP_item_ii} it holds for all $ t\in[0,T],\theta\in\Theta $ that \begin{equation}
				\begin{aligned}
					\dim(\mathcal{D}(\Phi_{n,t}^\theta))&=(n+1)K(\max\{\dim(\calD(\Phi_\mu)),\dim(\calD(\Phi_{\sigma,0})))\}-1)-1\\
					&\quad+n\left(\dim(\mathcal{D}(\Phi_f))-2\right)+\dim(\mathcal{D}(\Phi_g)),
				\end{aligned}
			\end{equation}
			\item\label{DNNMLP_item_iii} it holds for all $ t\in[0,T],\theta\in\Theta $ that \begin{equation}
				\mnrm{\mathcal{D}(\Phi_{n,t}^\theta)}\le c(3M)^n,
			\end{equation} 
			\item\label{DNNMLP_item_iv} it holds for all $ t\in [0,T],\theta \in \Theta, x\in\R^d $ that \begin{equation}
				U_{n}^\theta(t,x,\omega)=(\mathcal{R}(\Phi_{n,t}^\theta))(x).
			\end{equation}
		\end{enumerate}
	\end{lemma}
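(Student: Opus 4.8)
The plan is to argue by induction on $n\in\N_0$, building the family $(\Phi_{n,t}^\theta)_{\theta\in\Theta,t\in[0,T]}$ at level $n+1$ out of the Euler--Maruyama network families of Lemma \ref{DNNEULER}, the MLP network families already built at all levels $\le n$, and Lemmas \ref{composition} (to realize compositions of networks), \ref{prolonging} (to equalize depths), and \ref{sumsDNN} (to form linear combinations). First I would apply Lemma \ref{DNNEULER} with the given $\mu,\sigma,\Phi_\mu,\Phi_{\sigma,\cdot}$, the partition $\tau_0,\dots,\tau_K$, the Brownian motions $(W^\theta)_{\theta}$, and the fixed $\omega$, obtaining a family $(\calY_{t,s}^\theta)_{t,s\in[0,T],\theta\in\Theta}\subseteq\mathbf N$ such that each $\calY_{t,s}^\theta$ realizes $x\mapsto Y_{t,s}^{\theta,x}(\omega)$, such that $\calD(\calY_{t,s}^\theta)$ is independent of $(t,s,\theta)$, such that $\dim(\calD(\calY_{t,s}^\theta))=A:=K(\max\{\dim(\calD(\Phi_\mu)),\dim(\calD(\Phi_{\sigma,0}))\}-1)$, and such that $\mnrm{\calD(\calY_{t,s}^\theta)}\le\max\{2d,\mnrm{\calD(\Phi_\mu)},\mnrm{\calD(\Phi_{\sigma,0})}\}\le c$. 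Writing $F:=\dim(\calD(\Phi_f))-2$ and $D_n:=(n+1)A-1+nF+\dim(\calD(\Phi_g))$ for the target depth in (ii), the base case $n=0$ is immediate: $U_0^\theta(t,\cdot,\omega)$ is a scalar multiple of the composition $\calR(\Phi_g)\circ\calR(\calY_{t,T}^{(\theta,0,-1)})$, which by Lemmas \ref{composition} and \ref{sumsDNN} is realized by a network of architecture $\calD(\Phi_g)\odot\calD(\calY_{t,T}^{(\theta,0,-1)})$; this architecture has depth $\dim(\calD(\Phi_g))+A-1=D_0$, has width at most $c$ (an $\odot$-product only creates one new layer, of width twice the interface dimension $d\le c$, and $\mnrm{\calD(\Phi_g)},\mnrm{\calD(\calY_{t,T}^{(\theta,0,-1)})}\le c$), and does not depend on $(t,\theta)$, so (i)--(iv) hold for $n=0$.

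For the induction step, assume (i)--(iv) for all indices $\le n$, fix $t\in[0,T]$ and $\theta\in\Theta$, and for $l\in\{0,\dots,n\}$ and $i\in\N$ put $s_{l,i}:=\mathfrak{T}_t^{(\theta,l,i)}(\omega)\in[t,T]$, a deterministic number since $\omega$ is fixed. Evaluating (\ref{DNNMLP_MLP}) at $\omega$, applying the induction hypothesis (iv) at the times $s_{l,i}$, and using that $\calY_{t,s}^{\theta'}$ realizes $x\mapsto Y_{t,s}^{\theta',x}(\omega)$, one sees that $x\mapsto U_{n+1}^\theta(t,x,\omega)$ is, with the scalar weights occurring in (\ref{DNNMLP_MLP}), the finite linear combination of the functions $\calR(\Phi_g)\circ\calR(\calY_{t,T}^{(\theta,0,-i)})$ ($1\le i\le M^{n+1}$), $\calR(\Phi_f)\circ\calR(\Phi_{l,s_{l,i}}^{(\theta,l,i)})\circ\calR(\calY_{t,s_{l,i}}^{(\theta,l,i)})$ ($0\le l\le n$, $1\le i\le M^{n+1-l}$), and $\calR(\Phi_f)\circ\calR(\Phi_{l-1,s_{l,i}}^{(\theta,-l,i)})\circ\calR(\calY_{t,s_{l,i}}^{(\theta,l,i)})$ ($1\le l\le n$, $1\le i\le M^{n+1-l}$). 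By Lemma \ref{composition} each of these functions is realized by a network whose architecture is the corresponding $\odot$-product (for instance $\calD(\Phi_f)\odot\calD(\Phi_{l,s_{l,i}}^{(\theta,l,i)})\odot\calD(\calY_{t,s_{l,i}}^{(\theta,l,i)})$), and by the induction hypothesis (i) together with Lemma \ref{DNNEULER}(ii) these architectures depend on neither $(t,\theta,i)$ nor $s_{l,i}$, but only on the type of the term (and, for the two $f$-types, on $l$). Using $\dim(\alpha\odot\beta)=\dim(\alpha)+\dim(\beta)-1$ and the induction hypothesis (ii), the depths of these building blocks equal $\dim(\calD(\Phi_g))+A-1$, $(l+2)A-1+(l+1)F+\dim(\calD(\Phi_g))$, and $(l+1)A-1+lF+\dim(\calD(\Phi_g))$, respectively; each is at most $D_{n+1}$, with equality exactly for the middle type at $l=n$. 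Fixing for each building block the network of depth $D_{n+1}$ supplied by the proof of Lemma \ref{prolonging} (whose construction depends only on the un-prolonged architecture and on $D_{n+1}$, hence stays $(t,\theta,i)$-independent) and then invoking Lemma \ref{sumsDNN} to glue these finitely many networks with the scalar weights of (\ref{DNNMLP_MLP}), I obtain $\Phi_{n+1,t}^\theta\in\mathbf N$ with $\calR(\Phi_{n+1,t}^\theta)=U_{n+1}^\theta(t,\cdot,\omega)$, of depth $D_{n+1}$, and whose architecture is the $\boxplus$ of the padded building-block architectures and is therefore again $(t,\theta)$-independent; this establishes (i), (ii) and (iv).

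It remains to prove the width bound (iii). An $\odot$-product creates one new layer of width twice the interface dimension, and the prolongation of Lemma \ref{prolonging} inserts only layers of width $\le\max\{2,\text{current width}\}$; since the interface dimensions occurring here lie in $\{1,d\}\subseteq[1,c]$, since $\mnrm{\calD(\Phi_f)},\mnrm{\calD(\calY_{t,s}^\theta)}\le c$, and since $\mnrm{\calD(\Phi_{l,s}^{\theta'})}\le c(3M)^l$ by the induction hypothesis (iii), the padded $g$-blocks have width $\le c$, the padded $f$-blocks built from $\Phi_{l,s}$ have width $\le c(3M)^l$, and the padded $f$-blocks built from $\Phi_{l-1,s}$ have width $\le c(3M)^{l-1}$. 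As $\boxplus$ leaves the common input width $d$ and output width $1$ unchanged and otherwise adds the hidden-layer widths, Lemma \ref{triangle} gives
\begin{equation}
  \mnrm{\calD(\Phi_{n+1,t}^\theta)}\le M^{n+1}c+\sum_{l=0}^{n}M^{n+1-l}c(3M)^l+\sum_{l=1}^{n}M^{n+1-l}c(3M)^{l-1}.
\end{equation}
Since $M^{n+1}c\le\tfrac13c(3M)^{n+1}$, $\sum_{l=0}^{n}M^{n+1-l}(3M)^l=M^{n+1}\sum_{l=0}^{n}3^l\le\tfrac12(3M)^{n+1}$, and $\sum_{l=1}^{n}M^{n+1-l}(3M)^{l-1}=M^{n}\sum_{l=1}^{n}3^{l-1}\le\tfrac12(3M)^{n}\le\tfrac16(3M)^{n+1}$, the right-hand side is $\le c(3M)^{n+1}\bigl(\tfrac13+\tfrac12+\tfrac16\bigr)=c(3M)^{n+1}$, which also dominates the input and output widths $d$ and $1$; this proves (iii) and closes the induction.

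The step I expect to be the main obstacle is precisely this width accounting. One must choose the prolongations so that every summand actually attains the common target depth $D_{n+1}$ while all intermediate architectures stay independent of $t$, $\theta$, $i$ and of the random evaluation times $s_{l,i}$; one must track carefully how $\odot$, $\boxplus$ and the prolongation of Lemma \ref{prolonging} act on $\mnrm{\cdot}$; and one must check that the three contributions on the right-hand side collapse to the clean bound $c(3M)^{n+1}$ --- the base $3M$ and the exponent $n$ in the induction hypothesis are calibrated exactly so that $\tfrac13+\tfrac12+\tfrac16=1$.
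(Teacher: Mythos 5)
Your proposal is correct and follows essentially the same route as the paper: induction on $n$, Lemma \ref{DNNEULER} for the Euler--Maruyama networks, Lemma \ref{composition} for the compositions, depth equalization by identity-type padding, Lemma \ref{sumsDNN} for the linear combination, and the same $\tfrac13+\tfrac12+\tfrac16$ geometric-sum calibration (via Lemma \ref{triangle} and the $\mnrm{\alpha\odot\beta}\le\max\{\mnrm{\alpha},\mnrm{\beta},2\alpha_0\}$ fact) giving $c(3M)^{n+1}$. The only cosmetic deviations are the base case (the paper simply represents the zero function $U_0^\theta$ by a network of the prescribed shape with all parameters zero, rather than as a zero multiple of $g\circ Y$) and the padding, where the paper composes explicitly with the identity networks $\mathfrak{n}_\cdot$ of Lemma \ref{Identity} instead of invoking the construction inside Lemma \ref{prolonging}; both choices are equivalent in substance and you correctly note the architecture-determinism needed for item (i).
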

	\begin{proof}
		Throughout this proof let $ \mathbb{Y}\in\N $ satisfy 
		\begin{equation}
			\mathbb{Y}=K(\max\{\dim(\calD(\Phi_\mu)),\dim(\calD(\Phi_{\sigma,0}))\}-1).
		\end{equation}
		We will prove this lemma via induction on $ n\in\N $. For $ n=0 $ we have for all $ \theta,\in\Theta, x\in\R^d, t\in[0,T] $ that $ U_0^\theta(t,x,\omega)=0 $. By choosing all weights and biases to be 0, the 0-function can be represented by deep neural networks of arbitrary shape. Therefore items $ (\ref{DNNMLP_item_i})$-$(\ref{DNNMLP_item_iv}) $ hold true for $ n=0 $.
		For the induction step $ n\rightarrow n+1 $ we will construct $ U_{n+1}^\theta(t,x), \theta\in\Theta,t\in[0,T],x\in\R^d, $ via DNN functions, which we achieve by representing each summand of (\ref{DNNMLP_MLP}) by a DNN, and then, in order to be able to utilize Lemma \ref{sumsDNN}, compose each DNN with a suitable sized network representing the identity function on $ \R $. 
		Lemma \ref{DNNEULER} yields the existence of a family $ (\calY_{t,s}^\theta)_{\theta\in\Theta,s,t\in[0,T]}\subseteq\mathbf{N}$ satisfying for all $\theta\in\Theta,t\in[0,T],s\in[t,T]  $ that $ (\calR(\calY_{t,s}^\theta))(x)= Y_{t,s}^{\theta,x}(\omega), $ $ \dim(\calD(\calY_{t,s}^\theta))=\mathbb{Y}, $ $ \mnrm{\calD(\calY_{t,s}^{\theta})}\le \max\{2d, \mnrm{\calD(\Phi_\mu)},\mnrm{\calD(\Phi_{\sigma,0})}\},  $ and $ \calD(\calY_{t,s}^\theta)=\calD(\calY_{0,0}^0) $.  Lemma \ref{composition} (applied for all $ \theta \in \Theta, x\in \R^d $ with $ d_1\curvearrowleft d, d_2 \curvearrowleft d, d_3 \curvearrowleft 1, \alpha \curvearrowleft \calD(\Phi_g), \beta \curvearrowleft \calD(\calY_{t,T}^\theta), f \curvearrowleft g, g\curvearrowleft Y_{t,T}^{\theta,x} $ in the notation of Lemma \ref{composition}) ensures for all $ \theta \in \Theta, x\in \R^d,  $ that 
		\begin{equation}\label{DNNMLPproof1}
			g\circ Y_{t,T}^{\theta,x} \in \calR\left(\left\{ \Phi \in \mathbf{N}: \calD(\Phi)=\calD(\Phi_g)\odot\calD(\calY_{t,T}^\theta)   \right\}\right).
		\end{equation} 
		Note that Lemma \ref{Identity} (applied with $ H \curvearrowleft (n+1)(\dim(\calD(\Phi_f))-2+\mathbb{Y})+1 $ in the notation of Lemma \ref{Identity}) ensures that 
		\begin{equation}
			\textup{Id}_{\R}\in \calR\left(\left\{\Phi\in\mathbf{N}:\calD(\Phi)=\mathfrak{n}_{(n+1)(\dim(\calD(\Phi_f))-2+\mathbb{Y})+1}\right\}\right).
		\end{equation}
		This, (\ref{DNNMLPproof1}), and Lemma \ref{composition} (applied for all $ t\in[0,T], \theta,\in\Theta,x\in\R^d $ with $ d_1\curvearrowleft d, d_2 \curvearrowleft 1, d_3 \curvearrowleft 1 , f=\textup{Id}_\R, g= g \circ Y_{t,T}^{\theta,x},$  $ \alpha = \mathfrak{n}_{(n+1)(\dim(\calD(\Phi_f))-2+\mathbb{Y})+1},$  $ \beta= \calD(\calY_{t,T}^\theta) $  in the notation of Lemma \ref{composition}) show that for all $ t\in[0,T], \theta\in\Theta, x\in\R^d $ it holds that
		\begin{equation}
			g\circ Y_{t,T}^{\theta,x} \in \calR\left(\left\{ \Phi \in \mathbf{N}: \calD(\Phi)=\mathfrak{n}_{(n+1)(\dim(\calD(\Phi_f))-2+\mathbb{Y})+1}\odot\calD(\Phi_g)\odot\calD(\calY_{t,T}^\theta)   \right\}\right).
		\end{equation}
		Next, the induction hypothesis implies for all $ l\in[0,n]\cap\N_0, t\in[0,T], \theta \in \Theta $ the existence of   $ \Phi_{l,t}^\theta\in\mathbf{N} $ satisfying  $\calR(\Phi_{l,t}^\theta)= U_l^\theta(t,\cdot,\omega) $ and $ \calD(\Phi_{l,t}^\theta)=\calD(\Phi_{l,0}^0). $ 
		This and Lemma \ref{composition} (applied for all $ \theta,\eta\in\Theta, t\in[0,T], l\in[0,n]\cap \N_0 $ with $ d_1\curvearrowleft d, d_2 \curvearrowleft d, d_3 \curvearrowleft 1 $, $ f \curvearrowleft U_n^\eta(\frakT^\theta_t(\omega),\cdot,\omega) $, $ g \curvearrowleft Y_{t,\frakT^\theta_t(\omega)}^{\theta,x}  $, $ \alpha\curvearrowleft \calD(\Phi_{l,\frakT^\theta_t}^\eta) $, $ \beta\curvearrowleft \calD(\calY_{t,\frakT^\theta_t(\omega)}^\theta) $ in the notation of Lemma \ref{composition}) show that for all $ \theta, \eta \in \Theta, t\in [0,T], l\in [0,n]\cap\N_0 $ it holds that 
		\begin{equation}\label{DNNMLPproof2}
			\begin{aligned}
				U_l^\eta\left (\frakT^\theta_t(\omega), Y_{t,\frakT^\theta_t(\omega)}^{\theta,x}(\omega),\omega\right )&=\left(\calR( \Phi_{l,\frakT^\theta_t(\omega)}^\eta)\right) \left( (\calR(\calY_{t,\frakT^\theta_t(\omega)}^\theta))(x)\right)    \\
				&\in \calR\left( \left\{ \Phi \in \mathbf{N}: \calD(\Phi)= \calD(\Phi_{l,\frakT^\theta_t(\omega)}^\eta)\odot\calD(\calY_{t,\frakT^\theta_t(\omega)}^\theta)  \right\}\right)\\
				&=\calR\left( \left\{ \Phi \in \mathbf{N}: \calD(\Phi)= \calD(\Phi_{l,0}^0)\odot\calD(\calY_{0,0}^0)  \right\}\right).\\
			\end{aligned}
		\end{equation}
		Note that Lemma \ref{Identity} (applied for all $ l\in [1,n-1]\cap\N $ with $ H \curvearrowleft (n-l)\left(\dim(\calD(\Phi_f))-2+\mathbb{Y})\right)+1 $ in the notation of Lemma \ref{Identity}) shows for all $ l\in [1,n-1]\cap\N $ that 
		\begin{equation}
			\textup{Id}_\R \in \calR\left( \left\{  \Phi\in \mathbf{N}: \calD(\Phi) = \mathfrak{n}_{(n-l)\left(\dim(\calD(\Phi_f))-2+\mathbb{Y}\right)+1}  \right\}   \right).
		\end{equation}
		This, (\ref{DNNMLPproof2}), and Lemma \ref{composition} (applied for all $ \theta,\eta\in\Theta, t\in[0,T], l\in [0,n-1]\cap\N_0 $ with $ d_1\curvearrowleft d,$  $ d_2 \curvearrowleft1,$  $ d_3 \curvearrowleft1, f\curvearrowleft\textup{Id}_\R $, $ g \curvearrowleft  U_l^\eta\left (\frakT^\theta_t(\omega), Y_{t,\frakT^\theta_t(\omega)}^{\theta,x}(\omega),\omega\right )$, $ \alpha = \mathfrak{n}_{(n-l)\left(\dim(\calD(\Phi_f))-2+\mathbb{Y}\right)+1} $, $ \beta = \calD(\Phi_{l,0}^0)\odot\calD(\calY_{0,0}^0)$ in the notation of Lemma \ref{composition}) show that for all $ \theta,\eta \in \Theta, $ $ t\in[0,T], $ $ l\in[0,n-1]\cap\N_0 $ it holds that 
		\begin{equation}
			U_l^\eta\left (\frakT^\theta_t(\omega), Y_{t,\frakT^\theta_t(\omega)}^{\theta,x}(\omega),\omega\right ) 
			\in \calR\left( \left\{  \Phi\in \mathbf{N}: \calD(\Phi) = \mathfrak{n}_{(n-l)\left(\dim(\calD(\Phi_f))-2+\mathbb{Y}\right)+1}\odot\calD(\Phi_{l,0}^0)\odot\calD(\calY_{0,0}^0) \right\}   \right).
		\end{equation}
		This and Lemma \ref{composition} (applied for all $ \theta,\eta\in\Theta, t\in [0,T], l\in [0,n-1]\cap \N_0 $ with $ d_1 \curvearrowleft d, $  $ d_2 \curvearrowleft 1, $  $ d_3 \curvearrowleft1 $, $ f\curvearrowleft f $, $ g \curvearrowleft U_l^\eta\left (\frakT^\theta_t(\omega), Y_{t,\frakT^\theta_t(\omega)}^{\theta,x}(\omega),\omega\right )  $,  $ \alpha\curvearrowleft \calD(\Phi_f) $, $ \beta\curvearrowleft \mathfrak{n}_{(n-l)\left(\dim(\calD(\Phi_f))-2+\mathbb{Y}\right)+1}\odot\calD(\Phi_{l,0}^0)\odot\calD(\calY_{0,0}^0)$ in the notation of Lemma \ref{composition}) prove that for all $ \eta,\theta \in \Theta, t\in [0,T], l\in [0,n-1]\cap\N_0 $ it holds
		\begin{equation}
			\begin{aligned}
				&f\circ U_l^\eta\left (\frakT^\theta_t(\omega), Y_{t,\frakT^\theta_t(\omega)}^{\theta,x}(\omega),\omega\right )\\
				&\in \calR\left( \left\{  \Phi\in \mathbf{N}: \calD(\Phi) = \calD(\Phi_f)\odot\mathfrak{n}_{(n-l)\left(\dim(\calD(\Phi_f))-2+\mathbb{Y}\right)+1}\odot\calD(\Phi_{l,0}^0)\odot\calD(\calY_{0,0}^0) \right\}   \right).
			\end{aligned}  
		\end{equation}
		Furthermore, (\ref{DNNMLPproof2}) (applied with $ l\curvearrowleft n $) and Lemma \ref{composition} (applied for all $ \theta,\eta\in\Theta, t\in[0,T] $ with $ d_1 \curvearrowleft d, $  $ d_2 \curvearrowleft 1, $  $ d_3 \curvearrowleft1 $, $ f\curvearrowleft f $, $ g \curvearrowleft U_n^\eta\left (\frakT^\theta_t(\omega), Y_{t,\frakT^\theta_t(\omega)}^{\theta,x}(\omega),\omega\right )  $,  $ \alpha\curvearrowleft \calD(\Phi_f) $, $ \beta\curvearrowleft \calD(\Phi_{n,0}^0)\odot\calD(\calY_{0,0}^0)$ in the notation of Lemma \ref{composition}) prove that for all $ \eta,\theta \in \Theta, t\in [0,T] $ it holds
		\begin{equation}
			f\circ U_n^\eta\left (\frakT^\theta_t(\omega), Y_{t,\frakT^\theta_t(\omega)}^{\theta,x}(\omega),\omega\right )
			\in \calR\left( \left\{  \Phi\in \mathbf{N}: \calD(\Phi) = \calD(\Phi_f)\odot\calD(\Phi_{l,0}^0)\odot\calD(\calY_{0,0}^0) \right\}   \right).
		\end{equation}
		To reiterate what was established so far: For each summand of $ U_{n+1}^\theta, \theta\in\Theta, $ existence of a suitable DNN representation was proven. The next step shows that these representing DNNs all have the same depth:
		Note that Lemma \ref{DNNEULER} implies that for all $ t\in[0,T], s\in[t,T] , \theta\in \Theta$ it holds that $ \mathbb{Y}=\dim(\calD(\calY_{t,s}^\theta)). $
		The definition of $ \odot $, the fact that for all $ l\in [0,n]\cap\N_0 $ we have 
		\begin{equation}
			\dim(\calD(\Phi_{l,0}^0)) = l(\dim(\calD(\Phi_f))-2)+(l+1)\mathbb{Y}+\dim(\calD(\Phi_g)) -1
		\end{equation}
		in the induction hypothesis, and the fact that for all $ \theta\in\Theta, t\in[0,T],s\in[t,T] $ we have $ \dim(\calD(\calY_{t,s}^\theta))=\mathbb{Y} $ imply that 
		
		\begin{equation}
			\begin{aligned}
				&\dim(\mathfrak{n}_{(n+1)(\dim(\calD(\Phi_f))-2+\mathbb{Y})+1}\odot\calD(\Phi_g)\odot\calD(\calY_{t,T}^\theta))\\
				&\qquad=\dim(\mathfrak{n}_{(n+1)(\dim(\calD(\Phi_f))-2+\mathbb{Y})+1})+\dim(\calD(\Phi_g))+\dim(\calD(\calY_{t,T}^\theta))-2\\
				&\qquad=(n+1)(\dim(\calD(\Phi_f))-2+\mathbb{Y})+1+\dim(\calD(\Phi_g))+\mathbb{Y}-2\\
				&\qquad=(n+1)(\dim(\calD(\Phi_f))-2)+(n+2)\mathbb{Y}+\dim(\calD(\Phi_g))-1,
			\end{aligned}
		\end{equation}
		that 
		\begin{equation}
			\begin{aligned}
				&\dim(\calD(\Phi_f)\odot\calD(\Phi_{n,0}^0)\odot\calD(\calY_{0,0}^0))\\
				&\qquad= \dim(\calD(\Phi_f))+\dim(\calD(\Phi_{n,0}^0))+\dim(\calD(\calY_{0,0}^0))-2 \\
				&\qquad=\dim(\calD(\Phi_f))+n(\dim(\calD(\Phi_f))-2)+(n+1)\mathbb{Y}+\dim(\calD(\Phi_g)) -1 +\mathbb{Y}-2\\
				&\qquad= (n+1)(\dim(\calD(\Phi_f))-2)+(n+2)\mathbb{Y}+\dim(\calD(\Phi_g))-1,
			\end{aligned}
		\end{equation}
		and for all $ l\in[0,n-1]\cap\N_0 $ that
		\begin{equation}
			\begin{aligned}
				&\dim(\calD(\Phi_f)\odot\mathfrak{n}_{(n-l)\left(\dim(\calD(\Phi_f))-2+\mathbb{Y}\right)+1}\odot\calD(\Phi_{l,0}^0)\odot\calD(\calY_{0,0}^0))\\
				&\qquad = \dim(\calD(\Phi_f))+\dim(\mathfrak{n}_{(n-l)\left(\dim(\calD(\Phi_f))-2+\mathbb{Y}\right)+1})+\dim(\calD(\Phi_{l,0}^0))+\dim(\calD(\calY_{0,0}^0))-3\\
				&\qquad = \dim(\calD(\Phi_f))+(n-l)\left(\dim(\calD(\Phi_f))-2+\mathbb{Y}\right)+1+l(\dim(\calD(\Phi_f))-2)+(l+1)\mathbb{Y}\\
				&\qquad\qquad+\dim(\calD(\Phi_g)) -1+\mathbb{Y}-3\\
				&\qquad= (n+1)(\dim(\calD(\Phi_f))-2)+(n+2)\mathbb{Y}+\dim(\calD(\Phi_g))-1.
			\end{aligned}
		\end{equation}
		Since all DNNs that are necessary to construct $ U_{n+1}^\theta, \theta\in\Theta,  $ have the same depth, Lemma \ref{sumsDNN} and (\ref{DNNMLP_MLP}) imply that there exist $ (\Phi_{n+1,t}^\theta)_{t\in[0,T],\theta\in\Theta}\subseteq\mathbf{N} $ satisfying for all $ \theta\in\Theta, t\in [0,T], x\in\R^d $ it holds that 
		\begin{equation}
			\begin{aligned}\label{DNNMLPproof8}
				&(\mathcal{R}(\Phi_{n+1,t}^\theta))(x)\\
				&\quad=\dfrac{1}{M^{n+1}}\sum_{i=1}^{M^{n+1}}g(Y_{t,T}^{(\theta,0,-i),x}(\omega))\\
				&\qquad+\dfrac{(T-t)}{M}\sum_{i=1}^M\left(f\circ U_{n}^{(\theta,n,i)}\right)\left(\mathfrak{T}_t^{(\theta,n,i)}(\omega),Y_{t,\mathfrak{T}_t^{(\theta,n,i)}(\omega)}^{(\theta,n,i)}(\omega),\omega\right)\\
				&\qquad+\sum_{l=0}^{n-1}\dfrac{(T-t)}{M^{n+1-l}}\sum_{i=1}^{M^{n+1-l}}\left(f \circ  U_l^{(\theta,l,i)}\right)\left(\mathfrak{T}_t^{(\theta,l,i)}(\omega),Y_{t,\mathfrak{T}_t^{(\theta,l,i)}(\omega)}^{(\theta,l,i)}(\omega),\omega\right)\\
				&\qquad-\sum_{l=1}^{n}\dfrac{(T-t)}{M^{n+1-l}}\sum_{i=1}^{M^{n+1-l}}\left(f\circ U_{l-1}^{(\theta,-l,i)}\right)\left(\mathfrak{T}_t^{(\theta,l,i)}(\omega),Y_{t,\mathfrak{T}_t^{(\theta,l,i)}(\omega)}^{(\theta,l,i)}(\omega),\omega\right)\\
				&\quad= U_{n+1}^\theta(t,x,\omega), 
			\end{aligned}
		\end{equation}
		that 
		\begin{equation}\label{DNNMLPproof7}
			\dim(\calD(\Phi_{n+1,t}^\theta))=(n+1)(\dim(\calD(\Phi_f))-2)+(n+2)\mathbb{Y}+\dim(\calD(\Phi_g))-1,
		\end{equation}
		and that 
		\begin{equation}\label{DNNMLPproof3}
			\begin{aligned}
				\calD(\Phi_{n+1,t}^\theta)&=\left( \underset{i=1}{\overset{M^{n+1}}{\boxplus}}\left[\mathfrak{n}_{(n+1)(\dim(\calD(\Phi_f))-2+\mathbb{Y})+1}\odot\calD(\Phi_g)\odot\calD(\calY_{0,0}^0)\right]  \right)\\
				&\quad \boxplus \left( \underset{i=1}{\overset{M}{\boxplus}} \calD(\Phi_f)\odot\calD(\Phi_{n,0}^0)\odot\calD(\calY_{0,0}^0) \right)\\
				&\quad \boxplus \left(\underset{l=0}{\overset{n-1}{\boxplus}} \underset{i=1}{\overset{M^{n+1-l}}{\boxplus}}\left[\calD(\Phi_f)\odot\mathfrak{n}_{(n-l)\left(\dim(\calD(\Phi_f))-2+\mathbb{Y}\right)+1}\odot\calD(\Phi_{l,0}^0)\odot\calD(\calY_{0,0}^0)\right] \right)\\
				&\quad \boxplus \left( \underset{l=1}{\overset{n}{\boxplus}} \underset{i=1}{\overset{M^{n+1-l}}{\boxplus}}\left[\calD(\Phi_f)\odot\mathfrak{n}_{(n-l+1)\left(\dim(\calD(\Phi_f))-2+\mathbb{Y}\right)+1}\odot\calD(\Phi_{l-1,0}^0)\odot\calD(\calY_{0,0}^0)\right] \right).
			\end{aligned}
		\end{equation}
		This shows for all $ t_1,t_2\in[0,T], \theta_1,\theta_2\in\Theta $ that 
		\begin{equation}\label{DNNMLPproof6}
			\calD(\Phi_{n+1,t_1}^{\theta_1})=\calD(\Phi_{n+1,t_2}^{\theta_2}).
		\end{equation}
		Additionally, (\ref{DNNMLPproof3}) and Lemma \ref{triangle} prove for all $ t\in[0,T], \theta\in\Theta $ that 
		\begin{equation}\label{DNNMLPproof4}
			\begin{aligned}
				\mnrm{\calD(\Phi_{n+1,t}^\theta)}&\le\sum_{i=1}^{M^{n+1}} \mnrm{\mathfrak{n}_{(n+1)(\dim(\calD(\Phi_f))-2+\mathbb{Y})+1}\odot\calD(\Phi_g)\odot\calD(\calY_{0,0}^0)}\\
				&\quad + \sum_{i=1}^M \mnrm{\calD(\Phi_f)\odot\calD(\Phi_{n,0}^0)\odot\calD(\calY_{0,0}^0)}\\
				&\quad +\sum_{l=1}^{n-1}\sum_{i=1}^{M^{n+1-l}}\mnrm{\calD(\Phi_f)\odot\mathfrak{n}_{(n-l)\left(\dim(\calD(\Phi_f))-2+\mathbb{Y}\right)+1}\odot\calD(\Phi_{l,0}^0)\odot\calD(\calY_{0,0}^0)}\\
				&\quad +\sum_{l=1}^n\sum_{i=1}^{M^{n+1-l}}\mnrm{\calD(\Phi_f)\odot\mathfrak{n}_{(n-l+1)\left(\dim(\calD(\Phi_f))-2+\mathbb{Y}\right)+1}\odot\calD(\Phi_{l-1,0}^0)\odot\calD(\calY_{0,0}^0)}.
			\end{aligned}
		\end{equation}
		Note that for all $ H_1, H_2, \alpha_0,...,\alpha_{H_1+1}, \beta_0,...,\beta_{H_2+1} \in \N $, $ \alpha, \beta \in \mathbf{D} $ satisfying $ \alpha=(\alpha_0,...,\alpha_{H_1+1}) $ and $ \beta=(\beta_0,...,\beta_{H_2+1}) $ with $ \alpha_0=\beta_{H_2+1} $ it holds that $ \mnrm{\alpha \odot \beta}\le \max\{\mnrm{\alpha},\mnrm{\beta}, 2\alpha_0\}. $
		This, (\ref{DNNMLPproof4}), the fact that for all $ H\in \N $ it holds that $ \mnrm{\mathfrak{n}_{H+2}}=2 $, and the fact that for all $ l\in[0,n]\cap\N_0 $ we have
		\begin{equation}
			\mnrm{\calD(\Phi_{l,0}^0)}\le c(3M)^l
		\end{equation}
		in the induction hypothesis show for all $ t\in[0,T],\theta\in\Theta $ that 
		\begin{equation}\label{DNNMLPproof5}
			\begin{aligned}
				\mnrm{\calD(\Phi_{n+1,t}^\theta)}&\le \left[\sum_{i=1}^{M^{n+1}}c\right]+\left[\sum_{i=1}^M c(3M)^n\right] + \left[\sum_{l=0}^{n-1}\sum_{i=1}^{M^{n+1-l}}c(3M)^l\right] +\left[ \sum_{l=1}^n\sum_{i=1}^{M^{n+1-l}}c(3M)^{l-1} \right]\\
				&=M^{n+1}c +Mc(3M)^n+\left[\sum_{l=0}^{n-1}M^{n+1-l}c(3M)^l\right]+\left[\sum_{l=1}^n M^{n+1-l}c(3M)^{l-1}\right]\\
				&=M^{n+1}c\left[1+3^n+\sum_{l=0}^{n-1}3^l+\sum_{l=1}^n3^{l-1}\right]=M^{n+1}c\left[1+\sum_{l=0}^{n}3^l+\sum_{l=1}^n3^{l-1}\right]\\
				&\le cM^{n+1}\left[1+2\sum_{l=0}^{n}3^l\right]=cM^{n+1}\left[1+2\frac{3^{n+1}-1}{3-1}\right]\\
				&=c(3M)^{n+1}.
			\end{aligned}
		\end{equation}
		(\ref{DNNMLPproof8}),(\ref{DNNMLPproof7}), (\ref{DNNMLPproof6}) and (\ref{DNNMLPproof5}) complete the induction step. By the principle of induction the proof is thus completed.
	\end{proof}
	

	\section{Deep neural network approximations for PDEs}\label{sect4}
	\subsection{Deep neural network approximations with specific polynomial convergence rates}
	The following theorem is our main result. It generalizes  [\citen{Hutzenthaler_2020}, Theorem 4.1] where the special case of semilinear heat equations is treated. In the proof we combine the results from Corollary \ref{fullerror} and Lemma \ref{U=R}. 
	\begin{theorem}\label{MainTheorem}
		Assume Setting \ref{Setting31}, let $T\in (0,\infty), b,c,p\in[1,\infty)$, $ B,\beta\in [0,\infty), $ $ \mathfrak{p}\in \N $, $ \alpha,q \in [2,\infty) $, 
		let $ \nrm{\cdot}_F\colon(\cup_{d\in\N}\R^{d\times d})\rightarrow[0,\infty) $ satisfy for all $ d\in\N, A=(a_{i,j})_{i,j\in\{1,...,d\}}\in\R^{d\times d} $ that $ \nrm{A}_F=\sqrt{\sum_{i,j=1}^d (a_{i,j})^2} $,
		let $ \langle \cdot,\cdot\rangle\colon(\cup_{d \in \N}\R^d\times\R^d)\rightarrow\R $ satisfy for all $ d\in \N, $ $ x=(x_1,...,x_d), $  $y=(y_1,...,y_d)\in\R^d $ that $ \langle x,y\rangle = \sum_{i=1}^d x_iy_i, $
		for all $ d\in \N $ let $ g_d\in C(\R^d,\R), \mu_d = (\mu_{d,i})_{i\in\{1,...,d\}} \in C(\R^d,\R^d), \sigma_d=(\sigma_{d,i,j})_{i,j\in\{1,...,d\}} \in C(\R^d,\R^{d\times d}) $, let $ f\in C(\R,\R) $, assume for all $ v,w\in\R $ that $ |f(v)-f(w)|\le c|v-w|, $\\
		\noindent	for every $ \varepsilon\in(0,1], d\in\N,v\in\R^d $ let $ \hat{\sigma}_{d,\varepsilon}\in C(\R^d,\R^{d\times d}), \frakg_{d,\varepsilon}, \tmu_{d,\varepsilon}, \tsigma_{d,\varepsilon,v}\in\mathbf{N} $, assume for all $ d\in\N, x,y,v\in\R^d,$  $ \varepsilon\in(0,1]      $ that $ \calR(\frakg_{d,\varepsilon})\in C(\R^d,\R), \calR(\tmu_{d,\varepsilon})\in C(\R^d,\R^d) $, $ \calR(\tsigma_{d,\varepsilon,v})\in C(\R^d,\R^{d}) $, 
		 $ \calR(\tsigma_{d,\varepsilon,v})=\hat{\sigma}_{d,\varepsilon}(\cdot)v, $ $ \calD(\tsigma_{d,\varepsilon,v})=\calD(\tsigma_{d,\varepsilon,0}), $ 
		\begin{equation}\label{THM4.2APPROXREGULARITY}
			|(\calR(\frakg_{d,\varepsilon}))(x)|\le b(d^{2c}+\nrm{x}^2)^{\frac{1}{2}}, \quad \max\{\nrm{(\calR(\tmu_{d,\varepsilon}))(0)},\nrm{\hat{\sigma}_{d,\varepsilon}(0)}_F\}\le cd^{c}
		\end{equation}
		\begin{equation}\label{THM4.2APPROXRATE}
			\max\left\{|g_d(x)-(\calR(\frakg_{d,\varepsilon}))(x)|, \nrm{\mu_d(x)-\left(\calR(\tmu_{d,\varepsilon})\right )(x)}, \nrm{\sigma_{d,\varepsilon}(x)-\hat{\sigma}_{d,\varepsilon}(x)}_F\right\}\le \varepsilon B d^p(d^{2c}+\nrm{x}^2)^{q},
		\end{equation}
		\begin{equation}\label{THM4.2APPROX_g_locLIPSCHITZ}
			|(\calR(\frakg_{d,\varepsilon}))(x)-(\calR(\frakg_{d,\varepsilon}))(y)|\le bT^{-\frac{1}{2}}\sqrt{2d^{2c}+\nrm{x}^2+\nrm{y}^2}\nrm{x-y},
		\end{equation}
		\begin{equation}\label{THM4.2APPROXLIPSCHITZ}
			\max\left\{\nrm{(\calR(\tmu_{d,\varepsilon}))(x)-(\calR(\tmu_{d,\varepsilon}))(y)}, \nrm{\hat{\sigma}_{d,\varepsilon}(x)-\hat{\sigma}_{d,\varepsilon}(y)}_F\right\}\le c\nrm{x-y},
		\end{equation}
		\begin{equation}
			\max\left\{\mnrm{\calD(\frakg_{d,\varepsilon})},\mnrm{\calD(\tmu_{d,\varepsilon})}, \mnrm{\calD(\tsigma_{d,\varepsilon,0})}\right\}\le Bd^p\varepsilon^{-\alpha},
		\end{equation}
		\begin{equation}
			\max\left\{\dim(\calD(\frakg_{d,\varepsilon})),\dim(\calD(\tmu_{d,\varepsilon})), \dim(\calD(\tsigma_{d,\varepsilon,0}))\right\} \le Bd^p\varepsilon^{-\beta},
		\end{equation}
		
		\noindent and for all $ d\in \N $ let $ \nu_d $ be a probability measure on $ (\R^d, \mathcal{B}(\R^d)) $ satisfying $ (\int_{\R^d} \nrm{y}^{2q+1}\nu_d(dy))^{\frac{1}{2q+1}}\le Bd^\mathfrak{p}$.		
\noindent		Then
		\begin{enumerate}[(i)]
			\item\label{THM42_item_i} for every $d\in\N$ there exists a unique viscosity solution $ u_d\in \{u\in C([0,T]\times\R^d,\R)\colon$  \\ $ \underset{s\in[0,T]}{\sup}\underset{y\in\R^d}{\sup}\frac{|u(s,y)|}{1+\nrm{y}}<\infty\} $ of 
			\begin{equation}\label{THM4.2_EXVISC}
				(\frac{\partial}{\partial t}u_d)(t,x)+\langle \mu_d(x),(\nabla_xu_d)(t,x)\rangle+\frac{1}{2}\textup{Tr}(\sigma_d(x)[\sigma_d(x)]^*(\textup{Hess}_xu_d)(t,x))+f(u_d(t,x))=0
			\end{equation}
			with $ u_d(T,x)=g_d(x) $ for $ t\in(0,T), x\in\R^d $,
			\item\label{THM42_item_ii} there exist $ C=(C_\gamma)_{\gamma \in (0,1]}\colon(0,1]\rightarrow(0,\infty) ,\eta\in(0,\infty),(\Psi_{d,\varepsilon})_{d\in\N, \varepsilon\in (0,1]}\subseteq \mathbf{N}$ such that for all $ d\in\N, \varepsilon\in(0,1] $ it holds that $ \calR(\Psi_{d,\varepsilon}) \in C(\R^d,\R)$, 
			$ \calP(\Psi_{d,\varepsilon})\le C_\gamma d^{\eta}\varepsilon^{-(6+2\alpha+\beta+\gamma)} ,$ and 
			\begin{equation}
				\left[\int_{\R^d}|u_d(0,x)-(\calR(\Psi_{d,\varepsilon}))(x)|^2\nu_d(dx)\right]^{\frac{1}{2}}\le \varepsilon.
			\end{equation}
		\end{enumerate}
	\end{theorem}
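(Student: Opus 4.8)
The plan is to realise the multilevel Picard (MLP) approximation of Corollary~\ref{fullerror}, for a single favourable realisation $\omega\in\Omega$, as a deterministic ReLU network by means of Lemma~\ref{U=R}, choosing the internal accuracy, the number of levels, and the number of Monte--Carlo samples and Euler steps as explicit functions of $d$ and $\varepsilon$. Item~(\ref{THM42_item_i}) I would settle first: by (\ref{THM4.2APPROXLIPSCHITZ}) each $\calR(\tmu_{d,\varepsilon})$ and each $\hat\sigma_{d,\varepsilon}$ is $c$-Lipschitz, and by (\ref{THM4.2APPROXRATE}) it converges pointwise to $\mu_d$, respectively $\sigma_d$, as $\varepsilon\downarrow 0$; hence $\mu_d,\sigma_d$ are $c$-Lipschitz, and $g_d$ inherits the local Lipschitz bound (\ref{THM4.2APPROX_g_locLIPSCHITZ}) and the growth bound (\ref{THM4.2APPROXREGULARITY}) in the limit. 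Together with the Lipschitz continuity of $f$ this puts us in a by now standard framework in which the stochastic fixed point equation associated with (\ref{THM4.2_EXVISC}) has a unique solution of at most linear growth which is the unique viscosity solution of (\ref{THM4.2_EXVISC}) with $u_d(T,\cdot)=g_d$ (cf., e.g., \cite{hutzenthaler2020multilevel} and the references therein); the same argument applied to the perturbed coefficients below produces the corresponding solution $u_2$ of Setting~\ref{setting2new}.

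For item~(\ref{THM42_item_ii}) fix $d\in\N$, $\varepsilon\in(0,1]$, and let $\varepsilon'\in(0,\varepsilon]$ and $n\in\N$ be chosen below. Since $f$ is $c$-Lipschitz but need not be a network function, I would first build a continuous piecewise-linear, $c$-Lipschitz function $f_2\in C(\R,\R)$ that agrees with $f$ up to $Bd^p\varepsilon'$ on the interval $[-R_d,R_d]$ with $R_d=\max\{1,c(Bd^p\varepsilon')^{-1}\}$ and is affine outside it, so that $|f(v)-f_2(v)|\le Bd^p\varepsilon'(1+|v|^{2})$ for all $v\in\R$; such an $f_2$ is represented by a network $\Phi_f$ with $\dim(\calD(\Phi_f))=O(1)$ and $\mnrm{\calD(\Phi_f)}=O\bigl(c^{2}(Bd^p\varepsilon')^{-2}\bigr)$. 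I would then invoke Setting~\ref{setting2new} with $(\mu_1,\sigma_1,g_1,f_1)=(\mu_d,\sigma_d,g_d,f)$, with $(\mu_2,\sigma_2,g_2,f_2)=(\calR(\tmu_{d,\varepsilon'}),\hat\sigma_{d,\varepsilon'},\calR(\frakg_{d,\varepsilon'}),\calR(\Phi_f))$, with $\delta=Bd^p\varepsilon'$, with Euler mesh $\tau_k=kTM^{-M}$, $k\in\{0,\dots,M^M\}$ (so that Corollary~\ref{fullerror} applies with that $M$), and with a Lyapunov function $\varphi(x)=\kappa_d+\nrm{x}^{2}$, where the power $\kappa_d$ of $d$ is taken large enough that (\ref{setting2new_Lyapunov})--(\ref{setting2new_g_f_mu_sigma_approx}) hold with constants $b,c,p,q,\beta$ that do \emph{not} depend on $d$. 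Corollary~\ref{fullerror} then produces, for every $x\in\R^d$ (and with $u_d(0,\cdot)=u_1(0,\cdot)$),
\begin{equation}
\left(\E\!\left[|U^{0}_{n,M}(0,x)-u_d(0,x)|^{2}\right]\right)^{1/2}\le C\,(\varphi(x))^{q+\frac12}\left[Bd^p\varepsilon'+\frac{\exp(2ncT+\tfrac M2)}{M^{n/2}}+\frac{1}{M^{M/2}}\right],
\end{equation}
where $U^{0}_{n,M}$ is the MLP built from $g_2,f_2,\mu_2,\sigma_2$ on this mesh and $C=C(T,b,c,q)$ is $d$-independent.

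Integrating in $x$ against $\nu_d$, using Fubini, and using the polynomial bound on $(\int_{\R^d}\nrm{y}^{2q+1}\nu_d(dy))^{1/(2q+1)}$ — after splitting off the tail $\{\nrm{x}>d^{\mathrm{const}}\varepsilon^{-\mathrm{const}}\}$ via Markov's inequality and a crude moment bound for $U^{0}_{n,M}$, so that $\int_{\R^d}(\varphi(x))^{2q+1}\nu_d(dx)$ contributes only a fixed power $d^{\eta_0}$ of $d$ — I obtain
\begin{equation}
\E\!\left[\int_{\R^d}|U^{0}_{n,M}(0,x)-u_d(0,x)|^{2}\,\nu_d(dx)\right]\le d^{\eta_0}C^{2}\left[Bd^p\varepsilon'+\frac{\exp(2ncT+\tfrac M2)}{M^{n/2}}+\frac{1}{M^{M/2}}\right]^{2}.
\end{equation}
I would then take $M=n$, choose $n$ minimal with $\exp((2cT+\tfrac12)n)n^{-n/2}+n^{-n/2}\le\tfrac14 C^{-1}B^{-1}d^{-\eta_0/2-p}\varepsilon$ — which forces $n=O(\log(d/\varepsilon)/\log\log(d/\varepsilon))$ and hence $M^{n}=n^{n}=d^{o(1)}\varepsilon^{-2+o(1)}$ — and put $\varepsilon'=\tfrac14 C^{-1}B^{-1}d^{-\eta_0/2-p}\varepsilon$, so that the right-hand side above is $\le\varepsilon^{2}/2$ and therefore some $\omega\in\Omega$ satisfies $\int_{\R^d}|U^{0}_{n,M}(0,x,\omega)-u_d(0,x)|^{2}\nu_d(dx)\le\varepsilon^{2}$. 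Lemma~\ref{U=R}, applied with this $\omega$, with $K=M^M$, and with $\Phi_\mu=\tmu_{d,\varepsilon'}$, $\Phi_{\sigma,v}=\tsigma_{d,\varepsilon',v}$ for $v\in\R^d$, $\Phi_f$ as above, $\Phi_g=\frakg_{d,\varepsilon'}$, yields $\Psi_{d,\varepsilon}:=\Phi^{0}_{n,0}\in\mathbf N$ with $\calR(\Psi_{d,\varepsilon})=U^{0}_{n,M}(0,\cdot,\omega)\in C(\R^d,\R)$, with $\dim(\calD(\Psi_{d,\varepsilon}))=O(nM^M\cdot Bd^p(\varepsilon')^{-\beta})=d^{O(1)}\varepsilon^{-2-\beta-o(1)}$ and $\mnrm{\calD(\Psi_{d,\varepsilon})}\le c_\star(3M)^{n}$, where $c_\star=\max\{2d,\mnrm{\calD(\Phi_f)},\mnrm{\calD(\frakg_{d,\varepsilon'})},\mnrm{\calD(\tmu_{d,\varepsilon'})},\mnrm{\calD(\tsigma_{d,\varepsilon',0})}\}=d^{O(1)}\varepsilon^{-\alpha}$ (using $\alpha\ge2$ to absorb the width $O((\varepsilon')^{-2})$ of $\Phi_f$), so $\mnrm{\calD(\Psi_{d,\varepsilon})}=d^{O(1)}\varepsilon^{-\alpha-2-o(1)}$. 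Since $\calP(\Psi_{d,\varepsilon})\le\dim(\calD(\Psi_{d,\varepsilon}))\,(\mnrm{\calD(\Psi_{d,\varepsilon})}+1)^{2}$, and every $o(1)$-exponent above grows more slowly than any power of $1/\varepsilon$ and of $d$, these can be absorbed, for each $\gamma\in(0,1]$, into a constant $C_\gamma$ and the loss $\varepsilon^{-\gamma}$, giving $\calP(\Psi_{d,\varepsilon})\le C_\gamma d^{\eta}\varepsilon^{-(6+2\alpha+\beta+\gamma)}$, which is item~(\ref{THM42_item_ii}).

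The step I expect to be most delicate is the joint choice of $\varphi$ and of the parameters of Setting~\ref{setting2new}: all constants produced by Corollary~\ref{fullerror} must be genuinely $d$-independent, so that the $d$-dependence is confined to $\varphi$, and the polynomial growth factor $(\varphi(x))^{q+\frac12}$ must be integrated against $\nu_d$ with only a polynomial-in-$d$ loss, which is why the tail of $\nu_d$ has to be handled separately via its $(2q+1)$-st moment (and why a rough $L^2$-moment bound for the MLP iterates $U^{0}_{n,M}$ on that tail is needed). The remaining difficulty is purely quantitative: one must check that $M=n$ of logarithmic size forces $M^{n}\approx\varepsilon^{-2}$ (the MLP error decays faster than any polynomial rate in $n$, so $n$ only needs to be logarithmic in $d/\varepsilon$), and then track the exponents through Lemma~\ref{U=R} to arrive at $\calP(\Psi_{d,\varepsilon})\le C_\gamma d^{\eta}\varepsilon^{-(6+2\alpha+\beta+\gamma)}$.
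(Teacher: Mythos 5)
Your overall route coincides with the paper's: approximate the coefficients and the nonlinearity by DNNs (the paper uses [\citen{Hutzenthaler_2020}, Corollary 3.13] where you build $f_2$ by hand, which is fine), apply the perturbed MLP error estimate of Corollary \ref{fullerror} with the Lyapunov function $\varphi_d(x)=d^{2c}+\nrm{x}^2$, take $M=n$ of roughly logarithmic size, pick a favourable $\omega$ via Fubini, realize $U^0_{n,M}(0,\cdot,\omega)$ as a network through Lemma \ref{U=R}, and do the same depth/width bookkeeping; item (\ref{THM42_item_i}) is also settled in the paper by the same Lipschitz/growth transfer (there via [\citen{Beck_2021}, Corollary 3.2] together with a Gronwall argument giving linear growth of $u_d$).

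The genuine gap is the integration against $\nu_d$. With the approximation hypothesis \eqref{THM4.2APPROXRATE} feeding $\delta(\varphi_d(x))^{q}$ into \eqref{setting2new_g_f_mu_sigma_approx}, Lemma \ref{newfullperturbation} and hence Corollary \ref{fullerror} give a pointwise factor $(\varphi_d(x))^{q+\frac12}$, so the squared error integrand grows like $(d^{2c}+\nrm{x}^2)^{2q+1}$, i.e. degree $4q+2$ in $\nrm{x}$, whereas only the $(2q+1)$-st moment of $\nrm{\cdot}$ under $\nu_d$ is assumed; your assertion that $\int_{\R^d}(\varphi(x))^{2q+1}\nu_d(dx)$ ``contributes only a fixed power of $d$'' is therefore unjustified (that integral may be infinite). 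Your tail-splitting does not repair this at the claimed rate: on $\{\nrm{x}>R\}$ the only bound available is a crude one that does not contain the small factor $E=\delta Bd^p+\text{MLP error}$, so the tail can only be made of order $\varepsilon^2$ by choosing $R$ as a positive power of $\varepsilon^{-1}$ (Markov/H\"older with the $(2q+1)$-moment), and then the core prefactor $(d^{2c}+R^2)^{2q+1}$ carries a power of $\varepsilon^{-1}$; this forces $\delta$ and the MLP accuracy to be of size $\varepsilon^{1+\kappa}$ for some $\kappa>0$ depending on $q$, which propagates through $\delta^{-(2\alpha+\beta)}$ and the $(3N)^{3N+1}$ factor and inflates the parameter bound to order $\varepsilon^{-(1+\kappa)(6+2\alpha+\beta+\gamma)}$. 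So your argument yields some polynomial rate, but not item (\ref{THM42_item_ii}) with the stated exponent $6+2\alpha+\beta+\gamma$. The paper performs no tail splitting: it quotes the pointwise bound in the form $(d^{2c}+\nrm{x}^2)^{\frac q2+\frac14}\big[\delta Bd^p+\cdots\big]$, whose square is integrable against $\nu_d$ exactly under the assumed $(2q+1)$-moment, and then proceeds as you do; to close your proof you would either need that sharper pointwise exponent (note that deducing it from Corollary \ref{fullerror} as stated, which carries $(\varphi(x))^{q+\frac12}$, requires an additional argument) or a correspondingly stronger moment assumption on $\nu_d$.
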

	\begin{proof}
		Throughout the proof assume without loss of generality that 
		\begin{equation}
			B\ge \max \left\{16, |f(0)|+1, 16|c(4c+2|f(0)|)|^{\frac{1}{q-1}}\right\}.
		\end{equation}
		Note that the triangle inequality, (\ref{THM4.2APPROXRATE}) and (\ref{THM4.2APPROXLIPSCHITZ}) imply for all $ d\in\N, x\in\R^d, \varepsilon\in(0,1] $ that 
		\begin{equation}
			\begin{aligned}
				\nrm{\mu_d(x)-\mu_d(y)}&\le \nrm{\mu_d(x)-(\calR(\tmu_{d,\varepsilon}))(x)}+\nrm{(\calR(\tmu_{d,\varepsilon}))(x)-(\calR(\tmu_{d,\varepsilon}))(y)}\\
				&\qquad+\nrm{(\calR(\tmu_{d,\varepsilon}))(y)-\mu_d(y)}\\
				&\le \varepsilon Bd^p\left((d^{2c}+\nrm{x}^2)^{q}+(d^{2c}+\nrm{y}^2)^{q}\right)+c\nrm{x-y}.
			\end{aligned}
		\end{equation}
		This proves for all $ d\in\N,x\in\R^d $  that 
		\begin{equation}\label{THM4.2_2_mu_lipschitz}
			\nrm{\mu_d(x)-\mu_d(y)}\le c\nrm{x-y}.
		\end{equation}
		In the same manner it follows for all $ d\in\N, x\in\R^d $ that
		\begin{equation}\label{THM4.2_2_sigma_lipschitz}
			\nrm{\sigma_d(x)-\sigma_d(y)}_F\le c\nrm{x-y}
		\end{equation} 
		and 
		\begin{equation}
			|g_d(x)-g_d(y)|\le bT^{-\frac{1}{2}}\sqrt{2d^{2c}+\nrm{x}^2+\nrm{y}^2}\nrm{x-y}.
		\end{equation}
		Note that due to the triangle inequality,   (\ref{THM4.2APPROXREGULARITY}), and (\ref{THM4.2APPROXRATE}) it holds for all $ d\in\N, \varepsilon\in(0,1] $ that
		\begin{equation}\label{THM4.2ProofMuSigmaInitialValue}
			\begin{aligned}
				\max\{\nrm{\mu_{d}(0)},\nrm{\sigma_{d}(0)}_F\}&
				\le \max\{\nrm{\mu_{d}(0)-(\calR(\tmu_{d,\varepsilon}))(0)}, \nrm{\sigma_d(0)-\hat{\sigma}_{d,\varepsilon}(0)}_F\}+cd^{c}\\
				&\le \varepsilon B d^{p+2qc}+ cd^{c},
			\end{aligned}
		\end{equation}
		which shows for all $ d\in\N $ that it holds that
		\begin{equation}
			\max\{\nrm{\mu_{d}(0)},\nrm{\sigma_{d}(0)}_F\}\le cd^{c}.
		\end{equation}
		Next, the triangle inequality, (\ref{THM4.2APPROXREGULARITY}),  and (\ref{THM4.2APPROXRATE})  ensure for all $ d\in\N,x\in\R^d, \varepsilon\in(0,1] $ that
		\begin{equation}
			|g_d(x)|\le |g_d(x)-(\calR(\frakg_{d,\varepsilon}))(x)|+|(\calR(\frakg_{d,\varepsilon}))(x)|\le \varepsilon Bd^p(d^{2c}+\nrm{x}^2)^{q}+b(d^{2c}+\nrm{x}^2)^{\frac{1}{2}},
		\end{equation}
		which demonstrates for all $ d\in\N, x\in\R^d $ that 
		\begin{equation}\label{THM4.2_2_proof_g_regularity}
			|g_d(x)|\le b(d^{2c}+\nrm{x}^2)^{\frac{1}{2}}.
		\end{equation}
		Next, [\citen{Hutzenthaler_2020}, Corollary 3.13] (applied for all $ \varepsilon\in (0,1] $ with $ L\curvearrowleft c, q\curvearrowleft q, f\curvearrowleft f, \varepsilon\curvearrowleft\varepsilon $ in the notation of [\citen{Hutzenthaler_2020}, Corollary 3.13]) ensures existence of $ \frakf_\varepsilon \in\mathbf{N}, \varepsilon\in(0,1], $ which satisfy for all $ v,w\in\R, \varepsilon\in(0,1] $ that $ \calR(\frakf_\varepsilon)\in C(\R,\R)$, $ |(\calR(\frakf_\varepsilon))(v)-(\calR(\frakf_\varepsilon)) (w)|\le c|v-w|, $ $ |f(v)-(\calR(\frakf_\varepsilon)) (v)|\le \varepsilon(1+|v|^q), $ $ \dim(\calD(\frakf_\varepsilon))=3 $ and 
		\begin{equation}
			\mnrm{\calD(\frakf_\varepsilon)}\le 16\left(\max\left\{1,|c(4c+2|f(0)|)|^{\frac{1}{(q-1)}}\right\}\right)^{-\frac{q}{q-1}}.
		\end{equation}
		Due to the fact that $ B\ge 1+|f(0)|, $ it holds for all $ \varepsilon\in(0,1] $ that
		\begin{equation}
			|(\calR(\frakf_\varepsilon))(0)|\le |(\calR(\frakf_\varepsilon))(0)-f(0)|+|f(0)|\le \varepsilon+|f(0)|\le B,
		\end{equation}
		which ensures  for all $ \varepsilon\in(0,1] $ that
		\begin{equation}\label{THM4.2_Proof_f_initialvalue}
			\max\{|f(0)|,|(\calR(\frakf_\varepsilon))(0)|\}\le \frac{b}{T}+B =\frac{b+BT}{T}.
		\end{equation}
		Next, for all $ d\in \N $ let $ \varphi_d \in C^2(\R^d,[1,\infty)) $ satisfy for all $ x\in\R^d $ that $ \varphi_d(x)=d^{2c}+\nrm{x}^2. $ Hence for all $ d\in\N, x\in\R^d $ we have 
		\begin{equation}
			(\nabla_x \varphi_d)(x)=2x \quad \text{and}\quad (\textup{Hess}_x \varphi_d)(x)=2\textup{Id}_{\R^d}.
		\end{equation}
		By the Cauchy-Schwartz inequality it thus holds for all $d\in\N, x\in\R^d , z\in\R^d\setminus{\{0\}}$ that
		\begin{equation}
			\frac{|\langle(\nabla_x\varphi_d)(x),z\rangle|}{(\varphi_d(x))^{1/2}\nrm{z}}\le \frac{|2\langle x,z\rangle|}{\nrm{x}\nrm{z}}\le 2 \le 2c,
		\end{equation}
		\begin{equation}
			\frac{\langle z, (\textup{Hess}_x\varphi_d)(x)z\rangle}{(\varphi_d(x))^0 \nrm{z}^2}=\frac{2\nrm{z}^2}{\nrm{z}^2}=2\le 2c.
		\end{equation}
		Next, Jensen's inequality ensures that for all $d\in\N, x\in\R^d $ we have $ (d^{c}+\nrm{x})^2\le 2(d^{2c}+\nrm{x}^2)=2\varphi_d(x) $, which in turn implies that $ d^{c}+\nrm{x}\le \sqrt{2}(\varphi_d(x))^{\frac{1}{2}}. $ This and (\ref{THM4.2ProofMuSigmaInitialValue}) ensure for all $d\in\N, x\in\R^d , \varepsilon\in (0,1]$ that 
		\begin{equation}
			\frac{c\nrm{x}+\max\{\nrm{\mu_d(0)},\nrm{\sigma_d(0)}_F,\nrm{(\calR(\tmu_{d,\varepsilon}))(0)}, \nrm{\hat{\sigma}_{d,\varepsilon}(0)}_F\}}{(\varphi_d(x))^{\frac{1}{2}}}\le \frac{c(d^{c}+\nrm{x})}{(\varphi_d(x))^{\frac{1}{2}}}\le \sqrt{2} c\le 2c.
		\end{equation}
		Next, note that for all $ d\in \N $ it holds that
		\begin{equation}
			 \sup_{r\in(0,\infty)}[\inf_{x\in\R^d,\nrm{x}\ge r} \varphi_d(x)]=\infty  \text{ and }  \inf_{r\in(0,\infty)}\sup_{x\in\R^d, \nrm{x}> r}\frac{|f(0)|+|g_d(x)|}{\varphi_d(x)}=0. 
		\end{equation}
		Lemma \ref{LyapunovLemmaNew} $ (i) $ (applied with for all $ d\in \N $ with $ d \curvearrowleft d, $  $m\curvearrowleft d, $ $ c\curvearrowleft 2c, \kappa \curvearrowleft 1, p\curvearrowleft 2, \varphi \curvearrowleft \varphi_d, \mu \curvearrowleft \mu_d,  $ $ \sigma \curvearrowleft \sigma_d $ in the notation of Lemma \ref{LyapunovLemmaNew}) ensures for all $ d\in \N, x\in \R^d $ that
		\begin{equation}
			\langle \mu(x), (\nabla \varphi_d)(x)\rangle +\frac{1}{2}\textup{Tr}(\sigma(x)[\sigma(x)]^*(\textup{Hess}\varphi_d)(x))\le 12c^3.
		\end{equation}
		Analogously we get for all $ d\in \N, x\in \R^d, \varepsilon\in (0,1] $ that
		\begin{equation}
			\langle (\calR(\tmu_{d,\varepsilon}))(x), (\nabla \varphi_d)(x)\rangle +\frac{1}{2}\textup{Tr}(\hat{\sigma}_{d,\varepsilon}(x)[\hat{\sigma}_{d,\varepsilon}(x)]^*(\textup{Hess}\varphi_d)(x))\le 12c^3.
		\end{equation}
	Let $ (\Omega, \mathcal{F}, \mathbb{P}, (\F_t)_{t\in [0,T]}) $ be a filtered probability space satisfying the usual conditions, for all $ d\in \N  $ let $ W^d\colon[0,T]\times\Omega\rightarrow\R^d $ be a standard $ (\F_t)_{t\in[0,T]} $-Brownian motion. Then [\citen{Beck_2021}, Corollary 3.2](applied twice: the first application for all $ d\in \N $ with $ d\curvearrowleft d $, $ m\curvearrowleft d, $ ,$ T\curvearrowleft T $, $ L\curvearrowleft c $, $ \rho \curvearrowleft 12c^3 $, $ \mathcal{O}\curvearrowleft \R^d $, $ \mu \curvearrowleft ([0,T]\times \R^d \ni (t,x)\rightarrow \mu_d(x)\in \R^d), $ $ \sigma \curvearrowleft ([0,T]\times \R^d \ni (t,x)\rightarrow \sigma_d(x)\in \R^{d\times d}) $, $ g\curvearrowleft g_d, $  $ f\curvearrowleft ([0,T]\times \R^d\times \R \ni (t,x,v)\rightarrow f(v)\in \R), $ $ V\curvearrowleft \varphi_d $ in the notation of [\citen{Beck_2021}, Corollary 3.2]; the second application for all $ d\in \N, \varepsilon\in (0,1] $ with $ d\curvearrowleft d $, $ m\curvearrowleft d, $ ,$ T\curvearrowleft T $, $ L\curvearrowleft c $, $ \rho \curvearrowleft 12c^3 $, $ \mathcal{O}\curvearrowleft \R^d $, $ \mu \curvearrowleft ([0,T]\times \R^d \ni (t,x)\rightarrow(\calR(\tmu_{d,\varepsilon}))(x)\in \R^d), $ $ \sigma \curvearrowleft ([0,T]\times \R^d \ni (t,x)\rightarrow\hat{\sigma}_{d,\varepsilon}(x)\in \R^{d\times d}) $, $ g \curvearrowleft \calR(\frakg_{d,\varepsilon}), $ $ f\curvearrowleft \calR(\frakf_\varepsilon) $, $  V\curvearrowleft \varphi_d  $ in the notation of [\citen{Beck_2021}, Corollary 3.2]) guarantees that 
	\begin{enumerate}[(I)]
		\item\label{THM4.2_ENUM_Inew} for every $d\in\N$ there exists a unique viscosity solution\\ $u_d\in\{ u\in C([0,T]\times \R^d,\R):\limsup_{r\rightarrow\infty} [ \sup_{s\in[0,T]}\sup_{x\in\R^d,\nrm{x}>r}\frac{|u(s,y)|}{\varphi_d(x)}]=0\} $ of 
		\begin{equation}
			(\frac{\partial}{\partial t}u_d)(t,x)+\langle \mu_d(x),(\nabla_xu_d)(t,x)\rangle+\frac{1}{2}\textup{Tr}(\sigma_d(x)[\sigma_d(x)]^*(\textup{Hess}_xu_d)(t,x))=-f(u_d(t,x))
		\end{equation}
		with $ u_d(T,x)=g_d(x) $ for $ t\in(0,T), x\in\R^d $,
		\item\label{THM4.2_ENUM_IInew} for every $ d\in\N, x\in\R^d, t\in[0,T],\varepsilon\in(0,1] $ there exist  up to indistinguishability unique $ (\F_s)_{s\in[t,T]} $-adapted stochastic processes $ X_{t}^{x,d}=(X_{t,s}^{x,d})_{s\in[t,T]}\colon[t,T]\times \Omega\rightarrow\R^d $ and $ \calX_t^{x,d,\varepsilon}=(\calX_{t,s}^{x,d,\varepsilon})_{s\in[t,T]}\colon[t,T]\times\Omega\rightarrow\R^d $ with continuous sample paths satisfying that for all $ s\in[t,T] $ we have $ \mathbb{P}$-a.s. that 
		\begin{equation}
			\begin{aligned}
				X_{t,s}^{x,d}&=x+\int_t^s\mu_d(X_{t,r}^{x,d})dr+\int_t^s\sigma_d(X_{t,r}^{x,d})dW^d_r,\\
				\calX_{t,s}^{x,d,\varepsilon}&=x+\int_t^s(\calR(\tmu_{d,\varepsilon}))(\calX_{t,r}^{x,d,\varepsilon})dr+\int_t^s\hat{\sigma}_{d,\varepsilon}(\calX_{t,r}^{x,d,\varepsilon})dW^d_r,
			\end{aligned}
		\end{equation}
		\item\label{THM4.2_ENUM_IIInew} for every $ d\in\N, \varepsilon\in(0,1] $ there exist unique $ v_d\in C([0,T]\times \R^d,\R) $ and $ \frakv_{d,\varepsilon}\in C([0,T]\times\R^d,\R) $ which satisfy for all $ t\in[0,T],x\in\R^d $ that 
		\begin{equation}
			\begin{aligned}
				&\left[\sup_{s\in[0,T]}\sup_{y\in\R^d}\left(\frac{v_d(s,y)}{(\varphi_d(y))^{\frac{1}{2}}}\right)\right]+\left[\sup_{s\in[0,T]}\sup_{y\in\R^d}\left(\frac{\frakv_{d,\varepsilon}(s,y)}{(\varphi_d(y))^{\frac{1}{2}}}\right)\right]+\E\left[\left|(\calR(\frakg_{d,\varepsilon}))(\calX_{t,T}^{x,d,\varepsilon})\right|\right]\\
				&+\E\left[\left|g_d(X_{t,T}^{x,d})\right|\right]+\int_t^T\E\left[\left|f(v_d(s,X_{t,s}^{x,d}))\right|\right]ds+\int_t^T\E\left[\left|(\calR(\frakf_\varepsilon))(\frakv_{d,\varepsilon}(s,\calX_{t,s}^{x,d,\varepsilon}))\right|\right]ds<\infty
			\end{aligned}
		\end{equation}
		\begin{equation}
			v_d(t,x)=\E\left[g_d(X_{t,T}^{x,d})\right]+\int_t^T\E\left[f(v_d(s,X_{t,s}^{x,d}))\right]ds,
		\end{equation}
		\begin{equation}
			\frakv_{d,\varepsilon}(t,x)=\E\left[(\calR(\frakg_{d,\varepsilon}))(\calX_{t,T}^{x,d,\varepsilon})\right]+\int_t^T\E\left[(\calR(\frakf_\varepsilon))(\frakv_{d,\varepsilon}(s,\calX_{t,s}^{x,d,\varepsilon}))\right]ds,
		\end{equation}
		\item\label{THM4.2_ENUM_IVnew} we have for all $ d\in\N, x\in\R^d, t\in[0,T]$ that $ u_d(t,x)=v_d(t,x). $
	\end{enumerate}
Item (\ref{THM4.2_ENUM_IInew}) and Lipschitz continuity of $ \mu_d, \sigma_d, \calR(\tmu_{d,\varepsilon}), \hat{\sigma}_{d,\varepsilon}, d\in \N, \varepsilon\in (0,1], $ and , e.g. [\citen{rogers_williams_vol2}, Lemma 13.6], ensure for all $ d\in \N, \varepsilon\in (0,1],  $ $t\in[0,T], s\in[t,T],r\in[s,T] $ that 
\begin{equation}\label{THM4.2_PATHWISE_UNIQUE}
	\mathbb{P}(X_{t,r}^{x,d}=X^{X^{x,d}_{t,s},d}_{s,r})=\mathbb{P}(\calX_{t,r}^{x,d,\varepsilon}=\calX^{\calX^{x,d,\varepsilon}_{t,s},d,\varepsilon}_{s,r})=1.
\end{equation}
Next, item (\ref{THM4.2_ENUM_IIInew}) ensures for all $ d\in \N, x\in \R^d, t\in [0,T], s\in [t,T] $ that $ u_d(s,X_{t,s}^{x,d}) $ is integrable.
This, (\ref{THM4.2_PATHWISE_UNIQUE}), (\ref{THM4.2_2_proof_g_regularity}), (\ref{THM4.2_Proof_f_initialvalue}), and Lemma \ref{LyapunovLemmaNew} $ (ii) $ (applied for all $ d\in \N $ with $ d\curvearrowleft d $, $ m\curvearrowleft d, c\curvearrowleft 2c, $  $ \kappa \curvearrowleft 1, p \curvearrowleft 2,$  $ \varphi \curvearrowleft \varphi_d, \mu \curvearrowleft \mu_d, \sigma\curvearrowleft \sigma_d $ in the notation of Lemma \ref{LyapunovLemmaNew}) prove for all $ d\in\N, x\in \R^d, t\in [0,T], s\in [t,T] $ that 
\begin{equation}
	\begin{aligned}
		\E\left[\left|u_d(s,X_{t,s}^{x,d})\right|\right]&\le\E\left[\left|g_d\left (X^{X_{t,s}^{x,d},d}_{s,T}\right ) \right|\right]+\int_s^T\E\left[\left|f\left(u_d\left(r,X_{s,r}^{X_{t,s}^{x,d},d}\right)\right)\right|\right]dr\\
		&=\E\left[\left|g_d(X_{t,T}^{x,d})\right|\right]+\int_s^T\E\left[\left| f(u_d(r,X_{t,r}^{x,d})) \right|\right]dr\\
		&\le b\E\left[ \left(d^{2c}+\nrm{X_{t,T}^{x,d}}^2 \right)^{\frac{1}{2}}\right]+(T-s)|f(0)|+c\int_s^T\E\left[|u_d(r,X_{t,r}^{x,d})|\right]dr\\
		&\le be^{6c^3(T-t)}(d^{2c}+\nrm{x}^2)^{\frac{1}{2}}+b+BT+c\int_s^T\E\left[|u_d(r,X_{t,r}^{x,d})|\right]dr.
	\end{aligned}
\end{equation}
This and Gronwalls inequality ensure for all $ d\in \N, x\in \R^d, t\in [0,T] , s\in [t,T]$ that 
\begin{equation}
	\E\left[\left|u_d(s,X_{t,s}^{x,d})\right|\right]\le 2(b+BT)e^{7c^3(T-t)}(d^{2c}+\nrm{x}^2)^{\frac{1}{2}}.
\end{equation}
This ensures for all $ d\in \N $ that 
\begin{equation}\label{THM4.2_u_linear_growth}
	\sup_{s\in[0,T]}\sup_{y\in \R^d} \frac{|u_d(s,y)|}{1+\nrm{y}}<\infty,
\end{equation}
which combined with item (\ref{THM4.2_ENUM_Inew}) establish item (\ref{THM42_item_i}).
 It follows analogously for all $ d\in \N , \varepsilon\in (0,1]$ that
 \begin{equation}
 	\sup_{s\in[0,T]}\sup_{y\in \R^d}\frac{|\frakv_{d,\varepsilon}(s,y)|}{1+\nrm{y}}<\infty.
 \end{equation}
In order to prove item $ (\ref{THM42_item_ii}) $, we will combine Lemma \ref{U=R} with Corollary \ref{fullerror}, for which we yet need a suitable MLP setting.\\

\noindent Let $ \Theta=\cup_{n\in \N} \mathbb{Z}^n$, let $ \mathfrak{t}^\theta\colon\Omega\rightarrow[0,1] , \theta\in\Theta$, be i.i.d. random variables,
assume for all $ t\in(0,1)$ that $ \mathbb{P}(\mathfrak{t}^0\le t)=t $, let $ \mathfrak{T}^\theta\colon[0,T]\times\Omega\rightarrow[0,T] $ satisfy for all $ \theta\in\Theta, t\in[0,T] $ that $ \mathfrak{T}_t^\theta =t+(T-t)\mathfrak{t}^\theta $,
let $ W^{\theta,d}\colon[0,T]\times\Omega\rightarrow\R^d, \theta \in \Theta, d\in \N$, be i.i.d. standard $ (\F_t)_{t\in[0,T]}$-Brownian motions, assume that $(\mathfrak{t}^\theta)_{\theta\in\Theta} $ and $ (W^{\theta,d})_{\theta \in \Theta,d\in\N} $ are independent,
for every $ \delta \in (0,1] $, $d\in\N,  N\in \N, \theta \in \Theta, x\in \R^d, t\in [0,T] $ let $ Y_t^{N,d,\theta,x,\delta}=(Y_{t,s}^{N,d,\theta,x,\delta})_{s\in[t,T]}\colon[t,T]\times\Omega\rightarrow\R^d $ satisfy for all $d\in\N, n\in \{0,1,...,N\}, s\in [\tfrac{nT}{N},\tfrac{(n+1)T}{N}]\cap[t,T] $ that $ Y_{t,t}^{N,d,\theta,x,\delta}=x $ and 
\begin{equation}
	\begin{aligned}
		&Y_{t,s}^{N,d,\theta,x,\delta}-Y_{t,\max\{t,\frac{nT}{N}\}}^{N,d,\theta,x,\delta}\\
		& =	(\calR(\tmu_{d,\delta}))(Y_{t,\max\{t,\frac{nT}{N}\}}^{N,d,\theta,x,\delta})(s-\max\{t,\frac{nT}{N}\})+\hat{\sigma}_{d,\delta}(Y_{t,\max\{t,\frac{nT}{N}\}}^{N,d,\theta,x,\delta})(W_s^{\theta,d}-W_{\max\{t,\frac{nT}{N}\}}^{\theta,d}),
	\end{aligned}
\end{equation}
let $ U_{n,M,d,\delta}^\theta\colon[0,T]\times\R^d\times\Omega\rightarrow\R, n,M\in \mathbb{Z},d\in\N, \theta\in\Theta,\delta\in(0,1]$ satisfy for all $d\in\N, \theta \in \Theta, n\in\N_0, t\in [0,T], x\in\R^d,\delta\in(0,1] $ that
\begin{equation}
	\begin{aligned}
		&U_{n,M,d,\delta}^\theta(t,x)=\dfrac{\mathbbm{1}_\N(n)}{M^n}\sum_{i=1}^{M^n}(\calR(\frakg_{d,\delta}))(Y_{t,T}^{M^M,d,(\theta,0,-i),x,\delta})\\
		&+\sum_{l=0}^{n-1}\dfrac{(T-t)}{M^{n-l}}\left[\sum_{i=1}^{M^{n-l}}(\calR(\frakf_{\delta})\circ U_{l,M,d}^{(\theta,l,i)}-\mathbbm{1}_\N(l)\calR(\frakf_\delta)\circ U_{l-1,M,d}^{(\theta,-l,i)})(\mathfrak{T}_t^{(\theta,l,i)},Y_{t,\mathfrak{T}_t^{(\theta,l,i)}}^{M^M,d,(\theta,l,i),x,\delta})\right].
	\end{aligned}
\end{equation}
\noindent Let $ k_{d,\varepsilon}\in\N, d\in\N, \varepsilon\in(0,1] $ satisfy for all $ d\in\N, \varepsilon\in (0,1], $ that
\begin{equation}
	k_{d,\varepsilon}=\max\{\mnrm{\calD(\frakf_\varepsilon)},\mnrm{\calD(\frakg_{d,\varepsilon})},\mnrm{\calD(\tmu_{d,\varepsilon})},\mnrm{\calD(\tsigma_{d,\varepsilon,0})}, 2\},
\end{equation}
let $ c_d\in[1,\infty), d\in\N, $ satisfy for all $ d\in \N $
\begin{equation}
	\begin{aligned}
	c_d=8^qb^qc^2d^{(c+\mathfrak{p})(q+1)}B^q(T+1)e^{q(q32c^4+24c^3)T+(4ct)^q+(2c+1)^2},
	\end{aligned}
\end{equation}
let $ N_{d,\varepsilon}\in\N, d\in\N, \varepsilon\in(0,1] $ satisfy for all $ d\in\N, \varepsilon\in (0,1] $
\begin{equation}
	N_{d,\varepsilon}=\min\left\{n\in\N\cap[2,\infty): c_d\left[\frac{\exp(4ncT+\tfrac{n}{2})}{n^{n/2}}+\frac{1}{n^{n/2}}\right]\le \frac{\varepsilon}{2}\right\},
\end{equation} 
let $ \mathfrak{C}=(\mathfrak{C}_\gamma)_{\gamma\in(0,1]}\colon(0,1]\rightarrow(0,\infty] $ satisfy for all $ \gamma\in(0,1] $ that
\begin{equation}
	\mathfrak{C}_\gamma =\sup_{n\in\N\cap[2,\infty)}\left[(3n)^{3n+1}\left( \frac{e^{[4cT+\tfrac{1}{2}](n-1)}+1}{(n-1)^{(n-1)/2}}\right)^{6+\gamma}\right],
\end{equation}
and for all $ d\in\N, \varepsilon\in (0,1] $ let $ \delta_{d,\varepsilon}=\frac{\varepsilon}{4Bd^pc_d}. $
Observe that for all $ \gamma\in(0,1] $ it holds that
\begin{equation}\label{proof41estimatefrakC}
	\begin{aligned}
		\mathfrak{C}_\gamma&\le \sup_{n\in\N\cap[2,\infty)}\left[(3n)^{3n+1}\left( \frac{e^{[4cT+\tfrac{1}{2}](n-1)+1}}{(n-1)^{(n-1)/2}}\right)^{6+\gamma}\right]\\
		&\le \sup_{n\in\N\cap[2,\infty)}\left[(3n)^{3n+1}\left( \frac{e^{4cT+\tfrac{3}{2}}}{(n-1)^{1/2}}\right)^{(n-1)(6+\gamma)}\right]\\
		&= \sup_{n\in\N\cap[2,\infty)}\left[n^4 3^{3n+1} \frac{e^{[4cT+\tfrac{3}{2}](n-1)(6+\gamma)}}{(n-1)^{\tfrac{\gamma}{2}(n-1)}}\cdot\left(\frac{n}{n-1}\right)^{3(n-1)}\right]\\
		&\le \sup_{n\in\N\cap[2,\infty)}\left[n^4 3^{3n+1} \left( \frac{e^{[4cT+\tfrac{3}{2}](6+\gamma)}}{(n-1)^{\tfrac{\gamma}{2}}}\right)^{(n-1)}\right]\sup_{n\in\N\cap[2,\infty)}\left[\left(\frac{n}{n-1}\right)^{3(n-1)}\right]\\
		&<\infty.
	\end{aligned}
\end{equation}
Note that the definition of $ (c_d)_{d\in\N} $ implies that there exists a $ \tilde{c}\in[1,\infty) $ such that for all $ d\in\N $ it holds that 
\begin{equation}\label{THM42_proof_c}
	\begin{aligned}
		c_d&=\tilde{c}d^{(c+\mathfrak{p})(q+1)}.
	\end{aligned}
\end{equation}
Corollary \ref{fullerror} (applied for every $ d,M\in\N, n\in\N_0, x\in\R^d, \delta\in(0,1] $ with $ \delta \curvearrowleft \delta Bd^p, c\curvearrowleft 2c, b\curvearrowleft b+BT, p\curvearrowleft 2, q\curvearrowleft q,$ $ \varphi\curvearrowleft \varphi_d, $ $ f_1\curvearrowleft([0,T]\times\R^d\times\R\ni (t,x,v)\rightarrow f(v)\in\R), f_2 \curvearrowleft ([0,T]\times\R^d\times\R\ni (t,x,v)\rightarrow (\calR (\frakf_\delta))(v)\in\R)$), $ g_1\curvearrowleft g_d,$  $ g_2\curvearrowleft \calR(\frakg_{d,\delta}),$  $\mu_1\curvearrowleft \mu_d $, $ \mu_2\curvearrowleft\calR(\tmu_{d,\delta}), $  $ \sigma_1\curvearrowleft \sigma_d $, $ \sigma_2\curvearrowleft\hat{\sigma}_{d,\delta}, u_1\curvearrowleft u_d $ in the notation of Corollary \ref{fullerror}) shows that for all $ d,M\in\N,n\in\N_0, x\in\R^d, \delta\in(0,1] $ it holds that 
\begin{equation}
	\begin{aligned}
		\left(\E\left[|U_{n,M,d,\delta}^0(0,x)-u_d(0,x)|^2\right]\right)^{\frac{1}{2}}
		&
		\le 4^{q+1}b^qc^2(T+1)e^{q(q32c^4+24c^3)T+(4cT)^q+(2c+1)^2}\\
		&\cdot \left(d^{2c}+\nrm{x}^2\right)^{\frac{q}{2}+\frac{1}{4}}\left[\delta Bd^p+\frac{e^{4ncT+\frac{M}{2}}}{M^{\frac{n}{2}}}+\frac{1}{M^{\frac{M}{2}}}\right].
	\end{aligned}
\end{equation}
This, the triangle inequality, Jensen's inequality, and the fact that for all $ d\in \N $ it holds that $ \nu_d $ is a probability measure on $ (\R^d, \mathcal{B}(\R^d)) $ satisfying $ (\int_{\R^d}||y||^{2q}\nu_d(dy) )^{1/(2q)}\le Bd^\mathfrak{p}$  imply 
\begin{equation}
	\begin{aligned}
		&\left[\int_{\R^d}\E\left[|U_{n,M,d,\delta}^0(0,x)-u_d(0,x)|^2\right]\nu_d(dx)\right]^{\frac{1}{2}}\\
		&\le 4^{q+1}b^qc^2(T+1)e^{q(q32c^4+24c^3)T+(4cT)^q+(2c+1)^2}\\
		&\cdot\left(\delta Bd^p+\frac{e^{4ncT+\frac{M}{2}}}{M^{\frac{n}{2}}}+\frac{1}{M^{\frac{M}{2}}}\right)\left(\int_{\R^d}\left(d^{2c}+\nrm{x}^2\right)^{q+\frac{1}{2}}\nu_d(dx)\right)^{\frac{1}{2}}\\
		&\le 4^{q+1}b^qc^2(T+1)e^{q(q32c^4+24c^3)T+(4cT)^q+(2c+1)^2}\\
		&\cdot\left(\delta Bd^p+\frac{e^{4ncT+\frac{M}{2}}}{M^{\frac{n}{2}}}+\frac{1}{M^{\frac{M}{2}}}\right)\left(2^{q-\frac{1}{2}}d^{c(2q+1)}+2^{q-\frac{1}{2}}\int_{\R^d}\nrm{x}^{2q+1}\nu_d(dx)\right)^{\frac{1}{2}}\\
		&\le c_d\left(\delta Bd^p+\frac{e^{4ncT+\frac{M}{2}}}{M^{\frac{n}{2}}}+\frac{1}{M^{\frac{M}{2}}}\right).
	\end{aligned}
\end{equation}
This, the definitions of $ (N_{d,\varepsilon})_{d\in\N,\varepsilon\in (0,1]}, (\delta_{d,\varepsilon})_{d\in\N, \varepsilon\in (0,1]}, $ and Fubini's theorem show for all $d\in\N, \varepsilon\in(0,1] $ that 
\begin{equation}
	\begin{aligned}
		&\E\left[\int_{\R^d}|U^0_{N_{d,\varepsilon},N_{d,\varepsilon}, d,\delta_{d,\varepsilon}}(0,x)-u_d(0,x)|^2\nu_d(dx)\right]\\
		&=\int_{\R^d}\E\left[|U^0_{N_{d,\varepsilon},N_{d,\varepsilon}, d,\delta_{d,\varepsilon}}(0,x)-u_d(0,x)|^2\right]\nu_d(dx) \le \left(\frac{\varepsilon}{4}+\frac{\varepsilon}{2}\right)^2\le \varepsilon^2.
	\end{aligned}
\end{equation}
This shows that for all $ d\in\N, \varepsilon\in (0,1] $ there exists a $ \omega_{d,\varepsilon}\in\Omega $ such that
\begin{equation}
	\int_{\R^d}|U^0_{N_{d,\varepsilon},N_{d,\varepsilon}, d,\delta_{d,\varepsilon}}(0,x,\omega_{d,\varepsilon})-u_d(0,x)|^2\nu_d(dx)\le \varepsilon^2.
\end{equation}
This and Lemma \ref{U=R} (applied for all $ d\in\N,v\in\R^d,\varepsilon\in (0,1] ,$ with $ c\curvearrowleft k_{d,\delta_{d,\varepsilon}} $, $ d\curvearrowleft d $, $ K\curvearrowleft N_{d,\varepsilon}^{N_{d,\varepsilon}} $, $ M\curvearrowleft N_{d,\varepsilon},$  $ T\curvearrowleft T  $, $ \Phi_\mu \curvearrowleft \tmu_{d,\delta_{d,\varepsilon}} $, $ \Phi_{\sigma,v}\curvearrowleft \tsigma_{d,\delta_{d,\varepsilon},v} $, $ \Phi_f \curvearrowleft \frakf_{\delta_{d,\varepsilon}} $, $ \Phi_g \curvearrowleft \frakg_{d,\delta_{d,\varepsilon}} $ in the notation of Lemma \ref{U=R}) imply that for all $ d\in \N, \varepsilon\in (0,1] $ there exists a deep neural network $ \Psi_{d,\varepsilon} \in \mathbf{N} $ such that for all $ x\in \R^d $ it holds that $ \calR(\Psi_{d,\varepsilon})\in C(\R^d,\R), $ $ (\calR(\Psi_{d,\varepsilon}))(x)=U^0_{N_{d,\varepsilon},N_{d,\varepsilon}, d,\delta_{d,\varepsilon}}(0,x,\omega_{d,\varepsilon}), $ $ \mnrm{\calD(\Psi_{d,\varepsilon})}\le k_{d,\delta_{d,\varepsilon}}(3N_{d,\varepsilon})^{N_{d,\varepsilon}} $, and 
\begin{equation}
	\begin{aligned}
		\dim(\calD(\Psi_{d,\varepsilon}))&\le (N_{d,\varepsilon}+1)N_{d,\varepsilon}^{N_{d,\varepsilon}}\left(\max\{\dim(\calD(\tmu_{d,\delta_{d,\varepsilon}})),\dim(\calD(\tsigma_{d,\varepsilon,0}))\}-1\right)\\
		&\quad+N_{d,\varepsilon}+\dim(\calD(\frakg_{d,\delta_{d,\varepsilon}}))-1\\
		&\le N_{d,\varepsilon}+Bd^p\delta_{d,\varepsilon}^{-\beta}+(N_{d,\varepsilon}+1)N_{d,\varepsilon}^{N_{d,\varepsilon}}(Bd^p\delta_{d,\varepsilon}^{-\beta}-1)-1\\
		&\le N_{d,\varepsilon}Bd^p\delta_{d,\varepsilon}^{-\beta}+(N_{d,\varepsilon}+1)^{N_{d,\varepsilon}+1}(Bd^p\delta^{-\beta}_{d,\varepsilon})\\
		&\le 2(N_{d,\varepsilon}+1)^{N_{d,\varepsilon}+1}Bd^p\delta_{d,\varepsilon}^{-\beta}.
	\end{aligned}
\end{equation}
Hence we get for all $ d\in \N, \varepsilon\in (0,1] $ that
\begin{equation}
	\begin{aligned}
		\calP(\Psi_{d,\varepsilon})&\le \sum_{j=1}^{\dim(\calD(\Psi_{d,\varepsilon}))}k_{d,\delta_{d,\varepsilon}}(3N_{d,\varepsilon})^{N_{d,\varepsilon}}(k_{d,\delta_{d,\varepsilon}}(3N_{d,\varepsilon})^{N_{d,\varepsilon}}+1)\\
		&=\dim(\calD(\Psi_{d,\varepsilon}))\left[k_{d,\delta_{d,\varepsilon}}^2(3N_{d,\varepsilon})^{2N_{d,\varepsilon}}+k_{d,\delta_{d,\varepsilon}}(3N_{d,\varepsilon})^{N_{d,\varepsilon}}\right]\\
		&\le 2(N_{d,\varepsilon}+1)^{N_{d,\varepsilon}+1}Bd^p\delta_{d,\varepsilon}^{-\beta}\left[2k_{d,\delta_{d,\varepsilon}}^2(3N_{d,\varepsilon})^{2N_{d,\varepsilon}}\right]\\
		&\le 4Bd^p\delta_{d,\varepsilon}^{-\beta}k_{d,\delta_{d,\varepsilon}}^2(3N_{d,\varepsilon})^{3N_{d,\varepsilon}+1}.
	\end{aligned}
\end{equation}
Note that the definition of $ (k_{d,\varepsilon})_{d\in\N,\varepsilon\in(0,1]} $ implies for all $ d\in\N, \varepsilon\in(0,1] $ that
\begin{equation}
	\begin{aligned}
		k_{d,\varepsilon}&= \max\{\mnrm{\calD(\frakf_\varepsilon)},\mnrm{\calD(\frakg_{d,\varepsilon})},\mnrm{\calD(\tmu_{d,\varepsilon})},\mnrm{\calD(\tsigma_{d,\varepsilon,0})},2\}\\
		&\le \max\{B\varepsilon^{-2}, Bd^p\varepsilon^{-\alpha},2\}\\
		&\le \max\{B\varepsilon^{-\alpha}, Bd^p\varepsilon^{-\alpha},2\}\quad\le Bd^p\varepsilon^{-\alpha}.
	\end{aligned}
\end{equation}
This shows that for all $ d\in\N, \varepsilon\in(0,1] $ it holds that
\begin{equation}
	\begin{aligned}
		\calP(\Psi_{d,\varepsilon})&\le 4Bd^p\delta_{d,\varepsilon}^{-\beta}k_{d,\delta_{d,\varepsilon}}^2(3N_{d,\varepsilon})^{3N_{d,\varepsilon}+1}\\
		&\le 4B^3d^{3p}\delta_{d,\varepsilon}^{-2\alpha-\beta}(3N_{d,\varepsilon})^{3N_{d,\varepsilon}+1}.\\
	\end{aligned}
\end{equation}
Next, note that the definition of $ (N_{d,\varepsilon})_{d\in\N, \varepsilon\in (0,1]} $ implies that for all $ d\in\N,\varepsilon\in (0,1] $ it holds that
\begin{equation}
	\varepsilon\le 2c_d\frac{\exp((4cT+\tfrac{1}{2})(N_{d,\varepsilon}-1))+1}{(N_{d,\varepsilon}-1)^{(N_{d,\varepsilon}-1)/2}}.
\end{equation}
This  and the definition of $ (\mathfrak{C}_\gamma)_{\gamma\in(0,1]} $ imply for all $ d\in\N, \gamma,\varepsilon\in(0,1] $ that
\begin{equation}
	\begin{aligned}
		\calP(\Psi_{d,\varepsilon})&\le4B^3d^{3p}\delta_{d,\varepsilon}^{-2\alpha-\beta}(3N_{d,\varepsilon})^{3N_{d,\varepsilon}+1}\varepsilon^{6+\gamma}\varepsilon^{-6-\gamma}\\
		&\le 4B^3d^{3p}\delta_{d,\varepsilon}^{-2\alpha-\beta}\varepsilon^{-6-\gamma}(3N_{d,\varepsilon})^{3N_{d,\varepsilon}+1}\left(2c_d\frac{\exp((4cT+\tfrac{1}{2})(N_{d,\varepsilon}-1))+1}{(N_{d,\varepsilon}-1)^{(N_{d,\varepsilon}-1)/2}} \right)^{6+\gamma}\\
		&\le 
		2^{8+\gamma}B^3d^{3p}\delta_{d,\varepsilon}^{-2\alpha-\beta}\varepsilon^{-6-\gamma}c_d^{6+\gamma}\left[\sup_{n\in\N\cap[2,\infty)}(3n)^{3n+1}\left(\frac{e^{(4cT+\frac{1}{2})(n-1)}+1}{(n-1)^{\frac{n-1}{2}}}\right)^{6+\gamma}\right]\\
		&=2^{8+\gamma}B^3d^{3p}\delta_{d,\varepsilon}^{-2\alpha-\beta}\varepsilon^{-6-\gamma}c_d^{6+\gamma}\mathfrak{C}_\gamma\\
		&=2^{8+4\alpha+2\beta+\gamma}B^{3+2\alpha+\beta}\mathfrak{C}_\gamma c_d^{6+2\alpha+\beta+\gamma}d^{p(3+2\alpha+\beta)}\varepsilon^{-(6+2\alpha+\beta+\gamma)}.
	\end{aligned}
\end{equation}
Combining this with (\ref{proof41estimatefrakC}) and (\ref{THM42_proof_c}) ensures  that there exist  $ C=(C_\gamma)_{\gamma\in(0,1]}\colon(0,1]\rightarrow[0,\infty) $ and $ \eta\in(0,\infty) $ such that for all $ d\in\N, \gamma,\varepsilon\in (0,1] $ it holds that $ \calP(\Psi_{d,\varepsilon})\le C_\gamma d^\eta \varepsilon^{-(2\alpha+\beta+\gamma+6)} $. This establishes item $ (\ref{THM42_item_ii})$ and thus completes the proof.
\end{proof}

	\subsection{Deep neural network approximations with positive polynomial convergence rates}
	This corollary is an application of Theorem \ref{MainTheorem}, where for all $ d\in\N $ we set the probability measure $ \nu_d $ to be the $ d $-dimensional Lebesgue measure restricted on the $ d $-dimensional unit cube $ [0,1]^d. $
	\begin{corollary}\label{MainCorollary}
		Assume Setting \ref{Setting31},
		let $ c,T\in(0,\infty) $, 
		let $ \nrm{\cdot}_F\colon(\cup_{d\in\N}\R^{d\times d})$  $\rightarrow [0,\infty) $ satisfy for all $d\in\N, A= (a_{i,j})_{i,j\in\{1,...,d\}}\in\R^{d\times d} $  that $ \nrm{A}_F= \sqrt{\sum_{i,j=1}^{d}(a_{i,j})^2}$,
		let $ \langle \cdot,\cdot\rangle\colon(\cup_{d \in \N}\R^d\times\R^d)\rightarrow\R $ satisfy for all $ d\in \N, $ $ x=(x_1,...,x_d), $  $y=(y_1,...,y_d)\in\R^d $ that $ \langle x,y\rangle = \sum_{i=1}^d x_iy_i, $ 
		let $ f\in C(\R,\R) $, for all $ d\in\N $ let $ u_d \in C^{1,2}([0,T]\times\R^d,\R) $, $ \mu_d=(\mu_{d,i})_{i\in\{1,...,d\}}\in C(\R^d,\R^d) $, $ \sigma_d = (\sigma_{d,i,j})_{i,j\in\{1,...,d\}}\in C(\R^d,\R^{d\times d}) $ satisfy for all $ t\in[0,T], x=(x_1,...,x_d),y =(y_1,...,y_d)\in \R^d $ that $ u_d(T,x)=g_d(x), $
		\begin{equation}
			|f(x_1)-f(y_1)|\le c|x_1-y_1|, \quad |u_d(t,x)|^2\le c\left[d^c+\nrm{x}^2\right],
		\end{equation}
		\begin{equation}
			\dfrac{\partial }{\partial t} u_d (t,x) +\langle(\nabla_x u_d)(t,x), \mu_d(x)\rangle+\tfrac{1}{2}\textup{Tr}\left(\sigma_d(x)[\sigma_d(x)]^*(\textup{Hess}_xu_d)(t,x)\right)+f(u_d(t,x))=0,
		\end{equation}
		for all $ d\in\N,v\in\R^d,\varepsilon\in(0,1] $ let $ \hat{\sigma}_{d,\varepsilon}\in C(\R^d,\R^{d\times d}), $ $ \frakg_{d,\varepsilon}, \tmu_{d,\varepsilon},\tsigma_{d,\varepsilon,v}\in\mathbf{N}  $ satisfying for all $d\in\N, x,y,v\in\R^d, \varepsilon\in(0,1], $ that $ \calR(\frakg_{d,\varepsilon})\in C(\R^d,\R) $, $\calR(\tmu_{d,\varepsilon})\in C(\R^d,\R^d)  $, $ \calR(\tsigma_{d,\varepsilon}) \in C(\R^d,\R^{d^2}) $, 
		$ \calR(\tsigma_{d,\varepsilon,v})=\hat{\sigma}_{d,\varepsilon}(\cdot)v,  \calD(\tsigma_{d,\varepsilon,v})=\calD(\tsigma_{d,\varepsilon,0}),  $ 
		\begin{equation}\label{COR4.2_g_mu_sigma_growth}
			\left |(\calR(\frakg_{d,\varepsilon}))(x)\right |^2+\max_{i,j\in\{1,...,d\}}\left(\left |[(\calR(\tmu_{d,\varepsilon}))(0)]_i\right |+\left |(\hat{\sigma}_{d,\varepsilon}(0))_{i,j}\right |\right)\le c\left[d^c+\nrm{x}^2\right],
		\end{equation}
		\begin{equation}\label{COR4.2_g_mu_sigma_approx}
			\max\{|g_d(x)-(\calR(\frakg_{d,\varepsilon}))(x)|, \nrm{\mu_d(x)-(\calR(\tmu_{d,\varepsilon}))(x)}, \nrm{\sigma_d(x)-\hat{\sigma}_{d,\varepsilon}(x)}_F\}\le \varepsilon cd^c(1+\nrm{x}^c),
		\end{equation}
		\begin{equation}\label{COR4.2_g_mu_sigma_lipschitz}
			\max\{|(\calR(\frakg_{d,\varepsilon}))(x)-(\calR(\frakg_{d,\varepsilon}))(y)|,\nrm{(\calR(\tmu_{d,\varepsilon}))(x)-(\calR(\tmu_{d,\varepsilon}))(y)},\nrm{\hat{\sigma}_{d,\varepsilon}(x)-\hat{\sigma}_{d,\varepsilon}(y)}_F \}\le c\nrm{x-y},
		\end{equation}
		\begin{equation}\label{COR4.2_DNN_PARAMS}
			\max\{\calP(\frakg_{d,\varepsilon}),\calP(\tmu_{d,\varepsilon}),\calP(\tsigma_{d,\varepsilon,v})\}\le cd^c\varepsilon^{-c}.
		\end{equation}
		Then there exist $ (\Psi_{d,\varepsilon})_{d\in\N, \varepsilon\in(0,1]} \subseteq\mathbf{N}, \eta\in(0,\infty)$ such that for all $ d\in\N, \varepsilon\in(0,1] $ it holds that $ \calR(\Psi_{d,\varepsilon})\in C(\R^d, \R) $, $ \calP(\Psi_{d,\varepsilon})\le \eta d^\eta\varepsilon^{-\eta} $ and 
		\begin{equation}
			\left[\int_{[0,1]^d}|u_d(0,x)-(\calR(\Psi_{d,\varepsilon}))(x)|^2dx\right]^{\frac{1}{2}}\le \varepsilon.
		\end{equation}
	\end{corollary}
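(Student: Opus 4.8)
The plan is to derive Corollary \ref{MainCorollary} directly from Theorem \ref{MainTheorem} by specializing the probability measures to restricted Lebesgue measures and fixing the free constants of the theorem in terms of $ c $ and $ T $, and then to identify the given classical solution $ u_d $ with the viscosity solution produced by Theorem \ref{MainTheorem}. Concretely, for every $ d\in\N $ I would take $ \nu_d $ to be the $ d $-dimensional Lebesgue measure restricted to $ [0,1]^d $ (a probability measure), and I would take $ g_d:=u_d(T,\cdot)\in C(\R^d,\R) $. Since $ \nrm{y}\le\sqrt{d} $ for all $ y\in[0,1]^d $, the moment condition $ (\int_{\R^d}\nrm{y}^{2q+1}\nu_d(dy))^{1/(2q+1)}\le Bd^{\mathfrak{p}} $ holds with $ \mathfrak{p}=1 $ for any $ B\ge 1 $ and any $ q\in[2,\infty) $.

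The technical core of the argument — and the step I expect to be the main (though entirely routine) obstacle — is matching the hypotheses of Theorem \ref{MainTheorem}. I would set $ \mathfrak{c}:=\max\{c,2\} $, choose $ q\in[2,\infty) $ so large that $ \mathfrak{c}\le 2q $, and then pick $ b\in[1,\infty) $, $ B\in[1,\infty) $, $ p\in[1,\infty) $, $ \alpha,\beta\in[2,\infty) $ depending only on $ c,T $ so that, with $ c $ replaced by $ \mathfrak{c} $: the growth bound (\ref{THM4.2APPROXREGULARITY}) follows from (\ref{COR4.2_g_mu_sigma_growth}) using $ d^c\le d^{2\mathfrak{c}} $ and $ \nrm{(\calR(\tmu_{d,\varepsilon}))(0)}\le\sqrt{d}\max_i|[(\calR(\tmu_{d,\varepsilon}))(0)]_i| $, $ \nrm{\hat{\sigma}_{d,\varepsilon}(0)}_F\le d\max_{i,j}|(\hat{\sigma}_{d,\varepsilon}(0))_{i,j}| $; the approximation bound (\ref{THM4.2APPROXRATE}) follows from (\ref{COR4.2_g_mu_sigma_approx}) using $ 1+\nrm{x}^{\mathfrak{c}}\le 2^{q}(d^{2\mathfrak{c}}+\nrm{x}^2)^{q} $; the local Lipschitz bound (\ref{THM4.2APPROX_g_locLIPSCHITZ}) follows from (\ref{COR4.2_g_mu_sigma_lipschitz}) since $ \sqrt{2d^{2\mathfrak{c}}+\nrm{x}^2+\nrm{y}^2}\ge\sqrt{2} $; the Lipschitz bound (\ref{THM4.2APPROXLIPSCHITZ}) is immediate from (\ref{COR4.2_g_mu_sigma_lipschitz}); and the two parameter bounds follow from (\ref{COR4.2_DNN_PARAMS}) together with the elementary inequalities $ \mnrm{\calD(\Phi)}\le\calP(\Phi) $ and $ \dim(\calD(\Phi))\le\calP(\Phi) $ for all $ \Phi\in\mathbf{N} $, which are read off directly from the definitions $ \calP(\Phi)=\sum_{n=1}^{H+1}k_n(k_{n-1}+1) $ and $ \calD(\Phi)=(k_0,\dots,k_{H+1}) $. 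The $ C^{1,2} $-regularity of $ u_d $ and the bound $ |u_d(t,x)|^2\le c[d^c+\nrm{x}^2] $ are not used here but enter below.

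Once all hypotheses of Theorem \ref{MainTheorem} are verified, the theorem supplies, for each $ d\in\N $, a unique viscosity solution $ \frakv_d $ of (\ref{THM4.2_EXVISC}) with terminal datum $ g_d $ in the class $ \{u\in C([0,T]\times\R^d,\R)\colon\sup_{s\in[0,T]}\sup_{y\in\R^d}\tfrac{|u(s,y)|}{1+\nrm{y}}<\infty\} $, together with $ (C_\gamma)_{\gamma\in(0,1]} $, $ \eta_0\in(0,\infty) $, and networks $ (\Psi_{d,\varepsilon})_{d\in\N,\varepsilon\in(0,1]} $ with $ \calR(\Psi_{d,\varepsilon})\in C(\R^d,\R) $, $ \calP(\Psi_{d,\varepsilon})\le C_\gamma d^{\eta_0}\varepsilon^{-(6+2\alpha+\beta+\gamma)} $ for all $ \gamma\in(0,1] $, and $ [\int_{[0,1]^d}|\frakv_d(0,x)-(\calR(\Psi_{d,\varepsilon}))(x)|^2\,dx]^{1/2}\le\varepsilon $. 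Now I would argue $ u_d=\frakv_d $: since $ u_d\in C^{1,2}([0,T]\times\R^d,\R) $ solves (\ref{THM4.2_EXVISC}) classically with $ u_d(T,\cdot)=g_d $, it is in particular a viscosity solution of the same terminal value problem, and the bound $ |u_d(t,x)|^2\le c[d^c+\nrm{x}^2] $ yields $ \sup_{s\in[0,T]}\sup_{y\in\R^d}\tfrac{|u_d(s,y)|}{1+\nrm{y}}<\infty $, so $ u_d $ lies in the uniqueness class of Theorem \ref{MainTheorem}(\ref{THM42_item_i}); hence $ u_d=\frakv_d $. Finally I would eliminate the free exponent $ \gamma $ by evaluating the parameter bound at $ \gamma=1 $ and setting $ \eta:=\max\{\eta_0,\,7+2\alpha+\beta,\,C_1\}\in(0,\infty) $, which gives $ \calP(\Psi_{d,\varepsilon})\le\eta d^{\eta}\varepsilon^{-\eta} $ for all $ d\in\N,\varepsilon\in(0,1] $; combined with $ u_d=\frakv_d $ and the $ L^2([0,1]^d) $-estimate above, this completes the proof.
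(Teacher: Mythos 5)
Your proposal is correct and follows essentially the same route as the paper: specialize Theorem \ref{MainTheorem} to $\nu_d$ equal to Lebesgue measure on $[0,1]^d$ with $\mathfrak{p}=1$, verify its hypotheses from (\ref{COR4.2_g_mu_sigma_growth})--(\ref{COR4.2_DNN_PARAMS}) by elementary norm and growth estimates (including $\mnrm{\calD(\Phi)}\le\calP(\Phi)$ and $\dim(\calD(\Phi))\le\calP(\Phi)$), identify $u_d$ with the unique viscosity solution via the linear-growth uniqueness class, and fix $\gamma$ to absorb all constants into one $\eta$. The only slip is cosmetic: since passing from the entrywise bound in (\ref{COR4.2_g_mu_sigma_growth}) to $\nrm{(\calR(\tmu_{d,\varepsilon}))(0)}$ and $\nrm{\hat{\sigma}_{d,\varepsilon}(0)}_F$ costs factors $\sqrt{d}$ and $d$, the theorem's $c$-parameter must be taken as $c+1$ (as the paper does), not merely $\max\{c,2\}$.
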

	\begin{proof}
		Throughout the proof assume without loss of generality that $ c\in[2,\infty) $ (otherwise adapt the application of Theorem \ref{MainTheorem} below accordingly), 
		\color{black} and let $ B= 2c$. 
		We thus have that 
		\begin{equation}\label{COR4.2_proof_1}
			\max\{\mnrm{\calD(\frakg_{d,\varepsilon})},\dim(\calD(\frakg_{d,\varepsilon}))\}\le cd^c\varepsilon^{-c} \le Bd^c\varepsilon^{-c}.
		\end{equation}
		For all $ y\in [0,1]^d $ we have $ \nrm{y}\le \sqrt{d} $ and thus, for all $ \alpha \in (0,\infty) $ it holds that
		\begin{equation}\label{COR4.2_proof_3}
			\left(\int_{[0,1]^d}\nrm{y}^\alpha dy\right)^{\frac{1}{\alpha}}\le \sqrt{d} \le Bd.
		\end{equation}
		Jensen's inequality ensures for all $ d\in\N, x\in \R^d, \varepsilon\in (0,1] $ that 
		\begin{equation}\label{COR4.2_proof_4}
			\begin{aligned}
				|u_d(T,x)-(\calR(\frakg_{d,\varepsilon}))(x)|&\le \varepsilon cd^c(1+\nrm{x}^c) \\
				&\le \varepsilon cd^c 2(d^{2c}+\nrm{x}^2)^c\\
				&= \varepsilon B d^c(d^{2c}+\nrm{x}^2)^{c}.
			\end{aligned}
		\end{equation}
		Next,  (\ref{COR4.2_g_mu_sigma_growth})  ensures for all $ d\in \N , x\in \R^d,\varepsilon\in(0,1]$ that 
		\begin{equation}\label{COR4.2_proof_5}
			|(\calR(\frakg_{d,\varepsilon}))(x)|\le \sqrt{c}\left (d^c+\nrm{x}^2\right )^{\frac{1}{2}}\le (\sqrt{c}+c\sqrt{T})\left(d^{2c}+\nrm{x}^2\right)^{\frac{1}{2}},
		\end{equation}
		\begin{equation}\label{COR4.2_proof_7}
			\nrm{(\calR(\tmu_{d,\varepsilon}))(0)}\le \left(d \max_{i\in\{1,...,d\}}|[(\calR(\tmu_{d,\varepsilon}))(0)]_i|^2\right)^{\frac{1}{2}}\le cd^{c+1},
		\end{equation}
		and that 
		\begin{equation}\label{COR4.2_proof_8}
			\nrm{\hat{\sigma}_{d,\varepsilon}(0)}_F\le \left(d^2\max_{i,j\in\{1,...,d\}}|(\hat{\sigma}_{d,\varepsilon}(0))_{i,j}|^2\right)^{\frac{1}{2}}\le cd^{c+1}.
		\end{equation}
		Display (\ref{COR4.2_g_mu_sigma_lipschitz}) ensures for all $ d\in\N, x\in \R^d, \varepsilon\in (0,1] $ that 
		\begin{equation}\label{COR4.2_proof_6}
			|(\calR(\frakg_{d,\varepsilon}))(x)-(\calR(\frakg_{d,\varepsilon}))(y)|\le c\nrm{x-y}\le \left(\sqrt{\frac{c}{T}}+c\right)\nrm{x-y}.
		\end{equation}
		Furthermore, the fact that for all $d\in\N, t\in[0,T], x\in\R^d $ it holds that  $ |u_d(t,x)|^2\le c(d^c+\nrm{x}^2) $, Lipschitz continuity of $ f $, and the fact that every classical solution is also a viscosity solution ensure for all $ d\in\N $ that $ u_d $ satisfies (\ref{THM4.2_EXVISC}). This combined  with (\ref{COR4.2_proof_1})-(\ref{COR4.2_proof_6}), and 
		Theorem \ref{MainTheorem} (applied with $ \alpha \curvearrowleft c,$  $ \beta\curvearrowleft c, $ $b\curvearrowleft \sqrt{c}+c\sqrt{T}  $,  $B\curvearrowleft B,$  $ c\curvearrowleft c+1,$  $ p\curvearrowleft c, $  $q\curvearrowleft c ,$  $\mathfrak{p}\curvearrowleft 1, \gamma\curvearrowleft \frac{1}{2}$  in the notation of Theorem \ref{MainTheorem})  proves  existence of $ \left (\Psi_{d,\varepsilon}\right )_{d\in\N, \varepsilon\in (0,1]}\subseteq \mathbf{N}, \eta \in (0,\infty)  $ such that for all $ d\in\N, \varepsilon\in (0,1] $ it holds that $ \calR(\Psi_{d,\varepsilon}) \in C(\R^d,\R)$, $ \calP(\Psi_{d,\varepsilon})\le \eta d^{\eta}\varepsilon^{-\eta} $ and 
		\begin{equation}
			\left[\int_{[0,1]^d}|u_d(0,x)-(\calR(\Psi_{d,\varepsilon}))(x)|^2dx\right]^{\frac{1}{2}}\le \varepsilon.
		\end{equation}
		The proof is thus completed.
	\end{proof}
	\noindent \textbf{Acknowledgement.} The second author acknowledges funding through the research grant HU1889/7-1.
	\bibliography{refrences}
	\bibliographystyle{hacm}
\end{document}